\newtheorem{theorem}{Theorem}[section]
\newtheorem{lemma}[theorem]{Lemma}
\newtheorem{proposition}[theorem]{Proposition}
\newtheorem{corollary}[theorem]{Corollary}
\theoremstyle{definition}
\newtheorem{definition}[theorem]{Definition}
\newtheorem{remark}[theorem]{Remark}
\numberwithin{equation}{section}
\newcounter{smallromans}
\newenvironment{romanenumerate}
{\begin{list}{{\normalfont\textrm{(\roman{smallromans})}}}%
    {\usecounter{smallromans}\setlength{\itemindent}{0cm}%
      \setlength{\leftmargin}{5.5ex}\setlength{\labelwidth}{5.5ex}%
      \setlength{\topsep}{0.4ex}\setlength{\partopsep}{0.2ex}%
      \setlength{\itemsep}{0.4ex}}}%
  {\end{list}}
\newcommand{\romanref}[1]{{\normalfont\textrm{(\ref{#1})}}}
\newcounter{smallalphs}
\newenvironment{alphenumerate}
{\begin{list}{{\normalfont\textrm{(\alph{smallalphs})}}}%
    {\usecounter{smallalphs}\setlength{\itemindent}{0cm}%
      \setlength{\leftmargin}{5.5ex}\setlength{\labelwidth}{5.5ex}%
      \setlength{\topsep}{0.4ex}\setlength{\partopsep}{0.2ex}%
      \setlength{\itemsep}{0.4ex}}}%
  {\end{list}}
\newcommand{\alphref}[1]{{\normalfont\textrm{(\ref{#1})}}}
\renewcommand{\le}{\ensuremath{\leqslant}}
\renewcommand{\ge}{\ensuremath{\geqslant}}
\newcommand{\N}{\mathbb{N}}
\newcommand{\Z}{\mathbb{Z}}
\newcommand{\Q}{\mathbb{Q}}
\newcommand{\R}{\mathbb{R}}
\newcommand{\C}{\mathbb{C}}
\newcommand{\I}{\mathbb{I}}
\newcommand{\K}{\mathbb{K}}
\newcommand{\subfield}{\mathbb{L}}
\renewcommand{\phi}{\ensuremath{\varphi}}
\renewcommand{\epsilon}{\ensuremath{\varepsilon}}
\newcommand{\locsupp}{\operatorname{locsupp}}
\newcommand{\supp}{\operatorname{supp}}
\newcommand{\spa}{\operatorname{span}}
\newcommand{\clspa}{\overline{\operatorname{span}}\,}
\newcommand{\ran}{\operatorname{ran}}
\newcommand{\rank}{\operatorname{rank}}
\newcommand{\weight}{\operatorname{weight}}
\newcommand{\age}{\operatorname{age}}
\newcommand{\dist}{\operatorname{dist}}
\newcommand{\real}{\operatorname{Re}}
\newcommand{\smashw}[2][l]{{\text{\makebox[0pt][#1]{$#2$}}}}
\begin{document}
\title[Ideal structure  of the algebra of bounded operators]{Ideal structure
  of the algebra of bounded operators acting on a Banach space} 
\dedicatory{In memoriam: Uffe Haagerup (1949--2015)} 
\subjclass[2010]%
{Primary 46H10,
47L10;
Secondary
46B03,
47L20}
\author[T.~Kania]{Tomasz Kania}
\address{Department of Mathematics and Statistics, Fylde
  College, Lancaster University, Lancaster LA1 4YF, United Kingdom\newline
\indent 
{\normalfont{Current address:}} School of Mathematical Sciences,
 Western Gateway Building, University College Cork, Cork, Ireland}
\email{tomasz.marcin.kania@gmail.com}
\author[N.~J.~Laustsen]{Niels Jakob  Laustsen} 
\address{Department of Mathematics and Statistics, Fylde
  College, Lancaster University, Lancaster LA1 4YF, United Kingdom}
\email{n.laustsen@lancaster.ac.uk}
\keywords{Banach algebra; lattice of closed ideals; (bounded)
  approximate identity; finitely-generated, maximal left ideal;
  bounded operator; Banach space; Argyros--Haydon space;
  Bourgain--Delbaen construction; $\mathscr{L}_\infty$-space}
\begin{abstract} 
  We construct a Banach space~$Z$ such that the lattice of closed
  two-sided ideals of the Banach algebra~$\mathscr{B}(Z)$ of bounded
  operators on~$Z$ is as follows:
  \[ \{0\}\subset \mathscr{K}(Z)\subset\mathscr{E}(Z)
     \raisebox{-.5ex}%
              {\ensuremath{\overset{\begin{turn}{30}$\subset$\end{turn}}%
                  {\begin{turn}{-30}$\subset$\end{turn}}}}\!\!%
     \begin{array}{c}\mathscr{M}_1\\[1mm]\mathscr{M}_2\end{array}\!\!\!%
     \raisebox{-1.25ex}%
              {\ensuremath{\overset{\raisebox{1.25ex}{\ensuremath{\begin{turn}{-30}$\subset$\end{turn}}}}%
                  {\raisebox{-.25ex}{\ensuremath{\begin{turn}{30}$\subset$\end{turn}}}}}}\,\mathscr{B}(Z). \]%
   We then determine which kinds of approximate identities
   (bounded/left/right), if any, each of the four non-trivial closed
   ideals of~$\mathscr{B}(Z)$ contains, and we show that the maximal
   ideal~$\mathscr{M}_1$ is generated as a left ideal by two
   operators, but not by a single operator, thus answering a question
   left open in our collaboration with Dales, Kochanek and Koszmider
   (\emph{Studia Math.}~2013). In contrast, the other maximal
   ideal~$\mathscr{M}_2$ is not finitely generated as a left ideal.

   The Banach space~$Z$ is the direct sum of Argyros and Haydon's
   Banach space~$X_{\text{AH}}$ which has very few operators and a
   certain subspace~$Y$ of~$X_{\text{AH}}$. The key property of~$Y$ is
   that every bounded operator from~$Y$ into~$X_{\text{AH}}$ is the sum of a
   scalar multiple of the inclusion map and a compact operator.
\end{abstract}
\maketitle
\section{Introduction and statement of main results}%
\label{section1}
\noindent 
A Banach space~$E$ has \emph{very few operators} if~$E$ is
infinite-dimensional and every bounded operator on~$E$ is the sum of a
scalar multiple of the identity operator and a compact operator; that
is, $\mathscr{B}(E) = \K I_E + \mathscr{K}(E)$, where $\K = \R$ or $\K
= \C$ denotes the scalar field of~$E$. Resolving a famous,
long-standing open problem, Argyros and Haydon~\cite{ah} established
the existence of such Banach spaces by proving the following
spectacular result.

\begin{theorem}[Argyros and Haydon]\label{thmAH} There exists a
  Banach space~$X_{\text{\normalfont{AH}}}$ such that:
  \begin{romanenumerate}
  \item\label{thmAH1} $X_{\text{\normalfont{AH}}}$ has very few
    operators;
  \item\label{thmAH2} $X_{\text{\normalfont{AH}}}$ has a shrinking
    Schauder basis;
  \item\label{thmAH3} the dual space of $X_{\text{\normalfont{AH}}}$
    is isomorphic to~$\ell_1$.
  \end{romanenumerate}
\end{theorem}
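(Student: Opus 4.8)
Theorem~\ref{thmAH} is the celebrated result of~\cite{ah}, so I only sketch the strategy one would follow; the full proof is long and delicate. The plan is to realise $X_{\text{AH}}$ as a Bourgain--Delbaen $\mathscr{L}_\infty$-space whose norming functionals are built using the mixed-Tsirelson and coding technology of Gowers and Maurey. First I would fix two rapidly increasing lacunary sequences $(m_j)_{j\ge1}$ and $(n_j)_{j\ge1}$ of positive integers together with a compatible scale of Schreier families $(\mathcal{S}_{n_j})_{j\ge1}$, and carry out a countable recursion producing an increasing chain of finite sets $\emptyset=\Gamma_0\subset\Gamma_1\subset\cdots$, with $\Gamma=\bigcup_n\Gamma_n$, in which each node $\gamma\in\Gamma_{n+1}\setminus\Gamma_n$ carries an evaluation functional of one of two shapes: a ``cut'' functional $\theta\,e^*_\xi$ with $\xi$ from an earlier generation and $\theta\le1$, or a ``weighted'' functional $m_j^{-1}\bigl(e^*_\xi+\sum_{k}e^*_{\eta_k}\bigr)$, where $\xi\in\Gamma_n$ and the summands $e^*_{\eta_k}$ --- which already encode earlier weighted trees --- have supports forming an $\mathcal{S}_{n_j}$-admissible succession. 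Enumerating $\Gamma$ compatibly with the generations, the associated Bourgain--Delbaen vectors $(d_\gamma)_{\gamma\in\Gamma}$ form a Schauder basis of a space $X_{\text{AH}}\subseteq\ell_\infty(\Gamma)$, and the standard Bourgain--Delbaen estimates show that $X_{\text{AH}}$ is an $\mathscr{L}_\infty$-space with dual isomorphic to~$\ell_1$. This already yields~(iii) and the Schauder basis of~(ii); and since $X_{\text{AH}}^*$ is then separable, $X_{\text{AH}}$ contains no isomorphic copy of~$\ell_1$, so that basis is automatically shrinking (a Schauder basis of a space not containing~$\ell_1$ is shrinking, by James's theorem), which completes~(ii).

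The property that makes $X_{\text{AH}}$ have very few operators is forced by superimposing a Maurey--Rosenthal coding on the weighted nodes of odd weight. One fixes an injective ``coding function''~$\sigma$ assigning a rapidly growing index to each admissible finite sequence of earlier nodes, and declares a weight-$m_{2j+1}^{-1}$ node to be admissible only when, past its first constituent, the weights of the successive summands are dictated by~$\sigma$ applied to the preceding ones; these are the ``special'' functionals. The analytic heart is then the theory of \emph{rapidly increasing sequences} (RIS) and the attendant \emph{basic inequality}: one proves that for an RIS $(x_i)$ in $X_{\text{AH}}$ and scalars $(a_i)$ the norm $\bigl\|\sum_i a_i x_i\bigr\|$ is, up to a fixed constant and an arbitrarily small error, dominated by the norm of $\sum_i a_i e_i$ in the auxiliary mixed-Tsirelson space $T\bigl[(\mathcal{S}_{n_j},m_j^{-1})_j\bigr]$ --- the key point being that the injectivity of~$\sigma$ prevents any single special functional from resonating with more than one block of the sequence. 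Feeding this into the Gowers--Maurey machine shows that $X_{\text{AH}}$ is hereditarily indecomposable and, more precisely, that for each $T\in\mathscr{B}(X_{\text{AH}})$ there is a scalar~$\lambda$ with $T-\lambda I_{X_{\text{AH}}}$ strictly singular: this is the usual HI/spectral argument, in which the basic inequality supplies the RIS estimates needed to rule out that $T-\lambda I_{X_{\text{AH}}}$ be bounded below on any block subspace.

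The step I expect to be the main obstacle --- and the part genuinely new in~\cite{ah} relative to the reflexive HI spaces of Gowers and Maurey --- is the upgrade from ``strictly singular'' to ``compact'', that is, proving $\mathscr{S}(X_{\text{AH}})=\mathscr{K}(X_{\text{AH}})$. Here the $\mathscr{L}_\infty$-structure (equivalently, the $\ell_1$-structure of the dual) is essential: the Bourgain--Delbaen finite-dimensional decomposition is built from uniformly isomorphic copies of finite-dimensional $\ell_\infty$-spaces, and one shows that a strictly singular operator cannot act invertibly across arbitrarily long stretches of this decomposition --- heuristically, if it did it would, after a small Bourgain--Delbaen perturbation, fix a copy of $\ell_\infty^{k}$ for every $k\in\N$, which is incompatible with strict singularity. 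Pushing this through, one obtains that every strictly singular $S$ is a norm limit of finite-rank operators, so $\mathscr{S}(X_{\text{AH}})=\mathscr{K}(X_{\text{AH}})$; combined with the previous paragraph this gives $\mathscr{B}(X_{\text{AH}})=\K I_{X_{\text{AH}}}+\mathscr{K}(X_{\text{AH}})$, which is~(i). Throughout, the truly delicate points are the quantitative calibration of $(m_j)$, $(n_j)$ and the Schreier families so that the basic inequality ``closes'', together with the combinatorics of~$\sigma$ ruling out long ``dependent'' chains of special functionals --- these are precisely the parts of~\cite{ah} that demand the most care.
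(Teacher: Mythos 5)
The paper does not prove Theorem~\ref{thmAH} at all: it is imported verbatim from~\cite{ah}, and Section~\ref{section2} only recalls the ingredients needed later (the Bourgain--Delbaen frame, the parameters $m_j,n_j$, the coding~$\sigma$, rapidly increasing sequences, exact pairs). Measured against the actual Argyros--Haydon argument, your outline has the right overall architecture, but two steps are genuinely wrong as stated. First, the deduction of~\romanref{thmAH2}: the claim ``a Schauder basis of a space not containing $\ell_1$ is shrinking, by James's theorem'' is false for conditional bases (the summing basis of~$c_0$ is a counterexample, and $c_0^*$ is even separable); James's criterion applies to \emph{unconditional} bases only, whereas the Bourgain--Delbaen basis $(d_\gamma)$ is necessarily conditional, since the space is hereditarily indecomposable and so contains no unconditional basic sequence. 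Argyros and Haydon instead prove directly that the FDD $(M_n)$ is shrinking (their Proposition~5.12; see Remark~\ref{basisofXAHisshrinking}\romanref{basisofXAHisshrinking2} above), and the basis is then shrinking because it admits a shrinking finite-dimensional blocking.

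Second, your route to~\romanref{thmAH1} --- first ``$T-\lambda I$ strictly singular'' by the HI machinery, then ``strictly singular $=$ compact'' from the $\mathscr{L}_\infty$-structure --- is not how the proof goes, and the second half has a real gap: being bounded below, uniformly, on copies of $\ell_\infty^{k}$ for every $k\in\N$ is a purely finite-dimensional statement and does not by itself contradict strict singularity, so the heuristic you offer does not close without substantial extra work. In~\cite{ah} compactness is obtained directly: using exact pairs and dependent sequences one shows that $\dist(Tx_i,\K x_i)\to 0$ along every RIS, extracts the scalar~$\lambda$, and then concludes that $T-\lambda I_{X_{\text{\normalfont{AH}}}}$ is compact from the criterion that an operator sending every RIS (equivalently, every bounded block sequence) to a norm-null sequence is compact --- exactly the mechanism reproduced for the subspace~$Y$ in Proposition~\ref{AHprop511} and Lemma~\ref{FDDprojapprox} above; that strictly singular operators on $X_{\text{\normalfont{AH}}}$ are compact is a corollary of the theorem, not a stepping stone towards it. A more minor inaccuracy: the admissibility constraints in~\cite{ah} are by cardinality (ages bounded by~$n_j$, i.e.\ families $\mathcal{A}_{n_j}$, with the auxiliary mixed-Tsirelson space built on those), not Schreier families $\mathcal{S}_{n_j}$; the quantitative calibration you rightly flag as delicate is done with respect to that choice.
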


The starting point of the present paper is the observation
that~$X_{\text{\normalfont{AH}}}$ contains a subspace~$Y$ which has
certain special properties, as specified in following theorem; of
these, property~\romanref{prop3ofY} is by far the most important, and
also the hardest to achieve.  We are deeply grate\-ful to Professor
Argyros for having explained to us how to contruct such a subspace;
details of its construction will be given in Section~\ref{section2}.
\begin{theorem}\label{thesubspaceY}
  Argyros and Haydon's Banach space~$X_{\text{\normalfont{AH}}}$
  contains a closed, in\-finite-dimen\-sional subspace~$Y$ which has
  the following four properties:
  \begin{romanenumerate}
  \item\label{prop4ofY} $Y$ is the closed linear span of a certain
    subsequence of the Schauder basis
    for~$X_{\text{\normalfont{AH}}},$ and hence~$Y$ has a shrinking
    Schauder basis;
  \item\label{prop1ofY} $Y$ has infinite co\-dimen\-sion
    in~$X_{\text{\normalfont{AH}}},$ and hence $Y$ is 
    un\-com\-ple\-ment\-ed in~$X_{\text{\normalfont{AH}}};$
  \item\label{prop2ofY} the dual space of~$Y$
    is isomorphic to~$\ell_1;$
  \item\label{prop3ofY} every bounded operator from~$Y$
    into~$X_{\text{\normalfont{AH}}}$ is the sum of a scalar multiple
    of the inclusion map $J\colon Y\to X_{\text{\normalfont{AH}}}$ and
    a compact operator.
  \end{romanenumerate}
\end{theorem}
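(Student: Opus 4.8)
\noindent
The plan is to take $Y=\clspa\{d_\gamma:\gamma\in\Gamma'\}$, where $(d_\gamma)_{\gamma\in\Gamma}$ is the Bourgain--Delbaen basis of~$X_{\text{AH}}$ indexed by $\Gamma=\bigcup_n\Delta_n$ as in~\cite{ah}, and $\Gamma'\subseteq\Gamma$ is an infinite set with infinite complement, chosen moreover \emph{sparse} in a precise sense --- to be pinned down in Section~\ref{section2} --- that makes the argument for~\romanref{prop3ofY} go through. Granting such a~$\Gamma'$, properties \romanref{prop4ofY}--\romanref{prop2ofY} are comparatively soft. Since $(d_\gamma)_{\gamma\in\Gamma'}$ is a subsequence of the shrinking basis of~$X_{\text{AH}}$ (Theorem~\ref{thmAH}\romanref{thmAH2}), it is itself a shrinking basis of~$Y$, which gives~\romanref{prop4ofY}. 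As $\Gamma\setminus\Gamma'$ is infinite, the cosets $d_\gamma+Y$ with $\gamma\in\Gamma\setminus\Gamma'$ are linearly independent in~$X_{\text{AH}}/Y$, so $Y$ has infinite codimension; moreover a bounded projection of~$X_{\text{AH}}$ onto~$Y$ would, by Theorem~\ref{thmAH}\romanref{thmAH1} and idempotency, equal $\mu\,I_{X_{\text{AH}}}+K$ with $K$ compact and $\mu\in\{0,1\}$, forcing~$Y$ to be finite-dimensional ($\mu=0$) or of finite codimension ($\mu=1$) --- either way a contradiction; hence~\romanref{prop1ofY}. For~\romanref{prop2ofY}, the restriction map $R\colon X_{\text{AH}}^*\to Y^*$ is a quotient map; identifying $X_{\text{AH}}^*$ with $\ell_1(\Gamma)$ through the biorthogonal functionals $(e_\gamma^*)$ (Theorem~\ref{thmAH}\romanref{thmAH3}), one has $\ker R=\ell_1(\Gamma\setminus\Gamma')$ --- because $e_\gamma^*$ vanishes on every $d_{\gamma'}$ with $\gamma'\in\Gamma'$ when $\gamma\notin\Gamma'$ --- so $R$ restricts to an isomorphism of~$\ell_1(\Gamma')$ onto~$Y^*$ (surjective as $R$ is, injective because $(e_\gamma^*|_Y)_{\gamma\in\Gamma'}$ are the coordinate functionals of the basis of~$Y$); thus $Y^*\cong\ell_1(\Gamma')\cong\ell_1$.

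The substance is~\romanref{prop3ofY}, and the strategy is to replay, for an operator $T\colon Y\to X_{\text{AH}}$, the machinery that Argyros and Haydon use in~\cite{ah} to prove $\mathscr{B}(X_{\text{AH}})=\K\,I_{X_{\text{AH}}}+\mathscr{K}(X_{\text{AH}})$. I would reduce~\romanref{prop3ofY} to two assertions: \textup{(a)} there is a scalar $\lambda=\lambda_T$ such that $\|Tu_n-\lambda Ju_n\|\to0$ for every bounded block sequence $(u_n)$ in~$Y$ (each of which is automatically a bounded block sequence in~$X_{\text{AH}}$); and \textup{(b)} an operator $S\colon Y\to X_{\text{AH}}$ that maps every bounded block sequence in~$Y$ to a norm-null sequence is compact. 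Applying \textup{(a)} and~\textup{(b)} to $S=T-\lambda J$ then yields~\romanref{prop3ofY}. Assertion~\textup{(b)} holds because such an~$S$ is strictly singular --- it is bounded below on no infinite-dimensional subspace of~$Y$, each of which contains a block subspace on which $S$ is by hypothesis small --- and a strictly singular operator from~$Y$ into~$X_{\text{AH}}$ is compact, by the argument of~\cite{ah} (which uses the shrinking basis and the $\mathscr{L}_\infty$/Tsirelson-type structure of~$X_{\text{AH}}$ and transfers without difficulty from the domain~$X_{\text{AH}}$ to the domain~$Y$). For~\textup{(a)} I would first prove \textup{(a$_1$)}: $\dist(Tu_n,\K u_n)\to0$ for every bounded block sequence $(u_n)$ in~$Y$; and then \textup{(a$_2$)}: the scalars witnessing \textup{(a$_1$)} for two different block sequences converge to the same limit $\lambda_T$, which one sees by merging the two sequences into a single bounded block sequence in~$Y$ (after passing to subsequences) and re-applying~\textup{(a$_1$)}.

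Step~\textup{(a$_1$)} is the heart of the matter. It suffices to treat the case in which $(u_n)$ is a rapidly increasing sequence (RIS) in the sense of~\cite{ah}, since every bounded block sequence in~$Y$ has a subsequence which, after a norm-null perturbation, is a RIS --- a statement about supports and norms in~$X_{\text{AH}}$, unaffected by confining the blocks to~$\Gamma'$. For a RIS $(u_n)$ in~$Y$, $\dist(Tu_n,\K u_n)\to0$ is then proved as in~\cite{ah}: were it to fail, one would, after passing to a subsequence, obtain $\delta>0$ and functionals $g_n$ in the ball of~$X_{\text{AH}}^*$ with successive ranges such that $g_n(u_n)=0$ and $g_n(Tu_n)\ge\delta$, and feeding these, together with the \emph{basic inequality} and the norm estimates for RIS-vectors, into the structure of the~$X_{\text{AH}}$-norm ultimately contradicts the boundedness of~$T$. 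The crucial point is that throughout this argument $T$ is evaluated only on vectors of~$Y$ and on short linear combinations of them, so it is immaterial that $T$ is defined only on the uncomplemented subspace~$Y$; what \emph{is} essential is that $Y$ itself contain RIS of every order and weight the argument calls upon (and that the associated special functionals in~$X_{\text{AH}}^*$ still register such RIS-vectors). Guaranteeing this is precisely the job of the sparse set~$\Gamma'$: it must be rich enough that the recursive Bourgain--Delbaen construction still manufactures the needed RIS inside~$Y$, yet sparse enough to leave~$Y$ of infinite codimension --- the balance struck by the construction in Section~\ref{section2}.

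Accordingly, I expect the main obstacle to be exactly this tension in the choice of~$\Gamma'$ underlying~\textup{(a$_1$)}: singling out the subsequence of the basis referred to in~\romanref{prop4ofY} so that~$Y$ is simultaneously of infinite codimension (for~\romanref{prop1ofY}) and ``RIS-complete'' enough for the Argyros--Haydon bookkeeping to run with $X_{\text{AH}}$ replaced by~$Y$ in the domain. This is the one genuinely non-routine ingredient --- the part for which we owe a debt to Professor Argyros --- and it is carried out in Section~\ref{section2}; properties \romanref{prop4ofY}--\romanref{prop2ofY}, the reduction of~\romanref{prop3ofY} to~\textup{(a)} and~\textup{(b)}, and step~\textup{(b)} itself are, modulo~\cite{ah} and standard Banach-space theory, routine.
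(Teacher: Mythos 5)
Your overall architecture --- take $Y=\clspa\{d_\gamma:\gamma\in\Gamma'\}$ for an infinite, co-infinite $\Gamma'$, dispose of \romanref{prop4ofY} and \romanref{prop1ofY} softly, and rerun the Argyros--Haydon RIS machinery for operators defined on $Y$ --- is the paper's, but the one ingredient you defer, the choice of $\Gamma'$, is also mis-specified, and two of your intermediate claims fail for a general subsequence. The paper's $\Gamma'$ is not chosen to be ``RIS-rich'' (RIS inside $Y$ are automatic for \emph{any} subsequence); it is chosen hereditarily with respect to the functionals $c_\gamma^*$: $\Delta_2'=\{\beta_0\}$ and $\Delta_{n+1}'=\{\gamma\in\Delta_{n+1}^{\text{AH}}: c_\gamma^*(\eta)\ne0\ \text{for some}\ \eta\in\Gamma_n'\}$, as in \eqref{defnDeltanplus1prime}. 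What this buys is a support-compatibility property: $d_\gamma$ vanishes on $\Gamma^{\text{AH}}\setminus\Gamma'$ for every $\gamma\in\Gamma'$ (Lemma~\ref{basiclemmaGammaprime3}), hence every $y\in Y$ has $\supp y\subseteq\Gamma'$ and $i_q[\ell_\infty(\Gamma_q'\setminus\Gamma_p')]=\spa\{d_\gamma:\gamma\in\Gamma_q'\setminus\Gamma_p'\}$ (Corollaries~\ref{corBasicpropertiesofY} and~\ref{coriqmapsGamma}). That is exactly what the hard step needs: in the analogue of \cite[Proposition~5.11]{ah} (Proposition~\ref{AHprop511}) a bounded block sequence in $Y$ is split \emph{coordinatewise by weight} into a part with bounded local weight and a part with rapidly decreasing local weight, each a RIS by Proposition~\ref{AHprop510}, and one must know that both parts again lie in $Y$ --- which Corollary~\ref{coriqmapsGamma} guarantees and which fails for a generic ``sparse'' subsequence. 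Your proposed shortcut, that every bounded block sequence has a subsequence which is a norm-null perturbation of a RIS, is false even in $X_{\text{AH}}$: the blocks $x_i=i_{q_i}(u_i)$, where $u_i$ is the indicator function of $\Gamma_{q_i}^{\text{AH}}\setminus\Gamma_{q_{i-1}}^{\text{AH}}$, have norm at most $2$ by \eqref{theembeddingin} but carry coordinates equal to $1$ at elements of weight $1/m_k$ for every even $k\le q_{i-1}+1$, while condition \romanref{defnRIS3} of Definition~\ref{defnRIS} would force such coordinates to be at most $C/m_k$ for arbitrarily large admissible $k<j_i$; no subsequence, constant, or small perturbation repairs this. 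So the reduction in your step (a$_1$) must go through the two-RIS splitting, and this is precisely where the special structure of $\Gamma'$ --- not its ``richness'' --- enters.

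The same unproved support property is smuggled into your proof of \romanref{prop2ofY}: the assertion that $e_\gamma^*$ vanishes on every $d_{\gamma'}$ with $\gamma'\in\Gamma'$ when $\gamma\notin\Gamma'$ \emph{is} Lemma~\ref{basiclemmaGammaprime3}, and it is false for a general subsequence, since $\supp d_{\gamma'}$ spreads into $\Gamma^{\text{AH}}\setminus\Gamma_{\rank\gamma'}^{\text{AH}}$. Moreover, even granted it, $(e_\gamma^*|_Y)_{\gamma\in\Gamma'}$ are not the coordinate functionals of $(d_\gamma)_{\gamma\in\Gamma'}$ (those are $(d_\gamma^*|_Y)_{\gamma\in\Gamma'}$; $e_\gamma^*(d_\eta)=d_\eta(\gamma)$ need not vanish for $\eta\in\Gamma'$ of smaller rank), so the injectivity in your identification of $Y^*$ with $\ell_1(\Gamma')$ is not established. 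The paper instead deduces \romanref{prop2ofY} from the support property: Corollary~\ref{coriqmapsGamma} together with \eqref{theembeddingin} shows that $Y$ is a $\mathscr{L}_\infty$-space, and since $Y$ has a shrinking basis, Lewis--Stegall \cite{ls} gives $Y^*\cong\ell_1$. Your treatment of \romanref{prop4ofY} and \romanref{prop1ofY} is fine, and the (a)/(b) scheme for \romanref{prop3ofY} matches the paper's --- though (b) is obtained there directly from the shrinking FDD (Lemma~\ref{FDDprojapprox}), with no need for the unproved transfer ``strictly singular from $Y$ into $X_{\text{AH}}$ implies compact''. But with $\Gamma'$ unconstructed and its decisive property misidentified, the genuinely hard clauses \romanref{prop2ofY} and \romanref{prop3ofY} remain unproved.
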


In the remainder of this paper, we shall consider the Banach space
\begin{equation}\label{defnspaceZ} 
  Z=X_{\text{\normalfont{AH}}}\oplus Y, 
\end{equation}
where~$X_{\text{\normalfont{AH}}}$ and~$Y$ are as in
Theorems~\ref{thmAH} and~\ref{thesubspaceY}, respectively. For
definiteness, we shall equip~$Z$ with the $\ell_\infty$-norm; that is,
$\|(x,y)\| = \max\{\|x\|,\|y\|\}$ for $x\in
X_{\text{\normalfont{AH}}}$ and $y\in Y$; all our results will,
however, be of an isomorphic nature, so that any equivalent norm will
do.  Theorems~\ref{thmAH}\romanref{thmAH1}
and~\ref{thesubspaceY}\romanref{prop1ofY}+\romanref{prop3ofY} imply
that every bounded operator~$T$ on~$Z$ has a unique representation as
an operator-valued $(2\times 2)$-matrix of the form
\begin{equation}\label{matrixrepofoperator}
T = \begin{pmatrix} \alpha_{1,1}I_{X_{\text{\normalfont{AH}}}} +
  K_{1,1} & \alpha_{1,2} J + K_{1,2}\\ K_{2,1} & \alpha_{2,2}I_Y + 
  K_{2,2} \end{pmatrix},
\end{equation}
where $\alpha_{1,1}, \alpha_{1,2}$ and~$\alpha_{2,2}$ are scalars,
$I_{X_{\text{\normalfont{AH}}}}$ and~$I_Y$ denote the identity
operators on~$X_{\text{\normalfont{AH}}}$ and $Y$, respectively,
$J\colon Y\to X_{\text{\normalfont{AH}}}$ is the inclusion map, and
the operators \mbox{$K_{1,1}\colon X_{\text{\normalfont{AH}}}\to
X_{\text{\normalfont{AH}}}$}, $K_{1,2}\colon Y\to
X_{\text{\normalfont{AH}}}$, \mbox{$K_{2,1}\colon
  X_{\text{\normalfont{AH}}}\to Y$} and $K_{2,2}\colon Y\to Y$ are
compact.

Using this notation, we see that the sets
\begin{equation}\label{defnmaxideal}
  \mathscr{M}_1 = \{T\in\mathscr{B}(Z) : \alpha_{2,2} =
  0\}\qquad\text{and}\qquad \mathscr{M}_2 = \{T\in\mathscr{B}(Z) :
  \alpha_{1,1} = 0\}
\end{equation}
are maximal two-sided ideals of codimension one
in~$\mathscr{B}(Z)$. Our first main result gives a complete
description of the lattice of closed two-sided ideals
of~$\mathscr{B}(Z)$. Its statement involves the following notion,
which goes back to Kleinecke~\cite{kl}.
\begin{definition}
  A bounded operator on a Banach space~$E$ is \emph{inessential} if it
  belongs to the pre-image of the Jacobson radical of the Calkin
  algebra~$\mathscr{B}(E)\big/\overline{\mathscr{F}(E)}$, where
  $\overline{\mathscr{F}(E)}$ denotes the norm-closure of the ideal of
  finite-rank operators on~$E$. 
\end{definition}

We write~$\mathscr{E}(E)$ for the set of inessential operators on the
Banach space~$E$. This is a closed two-sided ideal of~$\mathscr{B}(E)$
which is proper if (and only if) $E$ is infinite-dimensional.

\begin{theorem}\label{2sidedideallatticeofBZ} The Banach
  algebra~$\mathscr{B}(Z)$ of bounded operators on the Banach
  space~$Z$ defined by~\eqref{defnspaceZ} contains exactly six closed
  two-sided ideals, namely
  \[ \spreaddiagramrows{1ex}\spreaddiagramcolumns{-6ex}%
  \xymatrix{&
    \mathscr{B}(Z)\ar@{-}[dl]\ar@{-}[dr]\\ \mathscr{M}_1\ar@{-}[dr] &
    & \mathscr{M}_2\ar@{-}[dl]\\ & \mathscr{E}(Z) =
    \mathscr{M}_1\cap\mathscr{M}_2\ar@{-}[d]\\ &
    \mathscr{K}(Z)\ar@{-}[d]\\ & \{0\}\smashw{,}} \]
  where~$\mathscr{M}_1$ and~$\mathscr{M}_1$ are given
  by~\eqref{defnmaxideal}, and the lines denote proper inclusions,
  with the larger ideal at the top.
\end{theorem}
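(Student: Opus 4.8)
The plan is to use the matrix representation~\eqref{matrixrepofoperator} to pass between operators on~$Z$ and the entries $\alpha_{1,1}I+K_{1,1}$, $\alpha_{1,2}J+K_{1,2}$, $K_{2,1}$, $\alpha_{2,2}I+K_{2,2}$, and to read off the structure of a two-sided ideal~$\mathscr{I}$ from which of these data it can realize. First I would verify that the six listed sets are indeed closed two-sided ideals: that $\mathscr{M}_1$ and~$\mathscr{M}_2$ are ideals of codimension one (hence closed and maximal) is immediate from the multiplicativity of the assignment $T\mapsto(\alpha_{1,1},\alpha_{2,2})$, which follows by composing two matrices of the form~\eqref{matrixrepofoperator} and observing that the off-diagonal and compact contributions never affect the diagonal scalars; that $\mathscr{K}(Z)\subseteq\mathscr{E}(Z)$ are closed two-sided ideals is standard; and the identity $\mathscr{E}(Z)=\mathscr{M}_1\cap\mathscr{M}_2$ should be proved by noting that $T\in\mathscr{M}_1\cap\mathscr{M}_2$ exactly when $\alpha_{1,1}=\alpha_{2,2}=0$, so that $T$ differs from the operator $\bigl(\begin{smallmatrix}0&\alpha_{1,2}J\\0&0\end{smallmatrix}\bigr)$ by a compact operator, and this operator is inessential (it is nilpotent, being a $2\times 2$ strictly upper-triangular matrix, hence quasinilpotent modulo the compacts); conversely any operator with $\alpha_{1,1}\ne0$ or $\alpha_{2,2}\ne0$ is invertible modulo~$\mathscr{K}(Z)$ on the corresponding summand and so Fredholm-like on a complemented infinite-dimensional subspace, which rules out inessentiality.

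Next I would establish strictness of the four displayed inclusions. For $\{0\}\subsetneq\mathscr{K}(Z)$ use that~$Z$ is infinite-dimensional; for $\mathscr{K}(Z)\subsetneq\mathscr{E}(Z)$ exhibit the non-compact inessential operator $\bigl(\begin{smallmatrix}0&J\\0&0\end{smallmatrix}\bigr)$, which is not compact because $J\colon Y\to X_{\text{AH}}$ is an (into) isomorphism onto an infinite-dimensional subspace by Theorem~\ref{thesubspaceY}\romanref{prop4ofY}; for $\mathscr{E}(Z)\subsetneq\mathscr{M}_1$ and $\mathscr{E}(Z)\subsetneq\mathscr{M}_2$ use the idempotents $\bigl(\begin{smallmatrix}I&0\\0&0\end{smallmatrix}\bigr)\in\mathscr{M}_1$ and $\bigl(\begin{smallmatrix}0&0\\0&I\end{smallmatrix}\bigr)\in\mathscr{M}_2$, which are not inessential since each acts as the identity on an infinite-dimensional complemented subspace; and for $\mathscr{M}_1,\mathscr{M}_2\subsetneq\mathscr{B}(Z)$ use that they have codimension one. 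The containments $\mathscr{K}(Z)\subseteq\mathscr{M}_1\cap\mathscr{M}_2$ and $\mathscr{M}_1\cap\mathscr{M}_2\subseteq\mathscr{M}_j\subseteq\mathscr{B}(Z)$ are obvious from the definitions.

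The substantive part is \emph{exhaustiveness}: that every closed two-sided ideal~$\mathscr{I}$ of~$\mathscr{B}(Z)$ is one of the six. Suppose $\mathscr{I}\not\subseteq\mathscr{E}(Z)$ and pick $T\in\mathscr{I}\setminus\mathscr{E}(Z)$; then $\alpha_{1,1}\ne0$ or $\alpha_{2,2}\ne0$. Compressing $T$ by the coordinate projections (which lie in~$\mathscr{B}(Z)$) and using that $\mathscr{B}(X_{\text{AH}})=\K I+\mathscr{K}(X_{\text{AH}})$ and $\mathscr{B}(Y)=\K I+\mathscr{K}(Y)$ (the latter from Theorem~\ref{thesubspaceY}\romanref{prop3ofY} applied with codomain~$Y$, via $J$), I would show that $\mathscr{I}$ contains $\alpha_{1,1}I_{X_{\text{AH}}}+(\text{compact})$ or $\alpha_{2,2}I_Y+(\text{compact})$; since $\mathscr{K}(X_{\text{AH}})$ and $\mathscr{K}(Y)$ are the \emph{unique} proper non-zero closed ideals of $\mathscr{B}(X_{\text{AH}})$ and $\mathscr{B}(Y)$ respectively (a known consequence of ``very few operators'', using that $\mathscr{K}$ contains a non-invertible-modulo-anything rank-one operator and that modulo $\mathscr{K}$ the algebra is the field~$\K$), it follows that $\mathscr{I}$ contains the identity of one summand. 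If it contains both diagonal idempotents then $I_Z\in\mathscr{I}$ and $\mathscr{I}=\mathscr{B}(Z)$; if it contains exactly $\bigl(\begin{smallmatrix}I&0\\0&0\end{smallmatrix}\bigr)$ then one checks $\mathscr{I}\supseteq\mathscr{M}_1$ (every operator with $\alpha_{2,2}=0$ is a sum of products involving this idempotent, the unit, $J$, and compacts — here one uses that $J$ and all the $K_{i,j}$ factor through the first summand once that idempotent is available, together with surjectivity of~$J$ onto its range and the lifting of compacts) and $\mathscr{I}\not\ni I_Z$, forcing $\mathscr{I}=\mathscr{M}_1$; symmetrically one gets $\mathscr{M}_2$. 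Finally, if $\mathscr{I}\subseteq\mathscr{E}(Z)$, then since $\mathscr{E}(Z)/\mathscr{K}(Z)$ is one-dimensional (spanned by the class of $\bigl(\begin{smallmatrix}0&J\\0&0\end{smallmatrix}\bigr)$, as computed above), $\mathscr{I}$ is either contained in $\mathscr{K}(Z)$ — whence $\mathscr{I}=\{0\}$ or $\mathscr{I}=\mathscr{K}(Z)$ by the uniqueness of the compacts as a minimal closed ideal, which itself needs the rank-one-operator argument adapted to $Z=X_{\text{AH}}\oplus Y$ — or $\mathscr{I}$ contains an operator of the form $\lambda\bigl(\begin{smallmatrix}0&J\\0&0\end{smallmatrix}\bigr)+(\text{compact})$ with $\lambda\ne0$, and then $\mathscr{I}\supseteq\mathscr{K}(Z)$ plus that class, i.e. $\mathscr{I}=\mathscr{E}(Z)$. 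The main obstacle I anticipate is the bookkeeping in the step ``$\mathscr{I}\ni\bigl(\begin{smallmatrix}I&0\\0&0\end{smallmatrix}\bigr)\Rightarrow\mathscr{I}\supseteq\mathscr{M}_1$'': one must show that \emph{every} matrix~\eqref{matrixrepofoperator} with $\alpha_{2,2}=0$ is generated, as a two-sided ideal, by that single idempotent together with the compacts already known to lie in~$\mathscr{I}$ — this requires care because the compact operators $K_{2,1}\colon X_{\text{AH}}\to Y$ and $K_{2,2}\colon Y\to Y$ land in the \emph{second} coordinate, so one needs to see that $\mathscr{K}(Z)\subseteq\mathscr{I}$ already (which follows once the idempotent is in~$\mathscr{I}$, since that idempotent has infinite-rank range and any closed ideal containing an infinite-rank idempotent contains all compacts on a space whose summands have the requisite approximation property — and $Z$ does, having a Schauder basis).
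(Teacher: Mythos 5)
Your proof is correct and is essentially the paper's argument in unrolled form: both rest on the matrix representation~\eqref{matrixrepofoperator} --- equivalently, the identification of $\mathscr{B}(Z)/\mathscr{K}(Z)$ with the algebra $\mathscr{T}_2$ of upper-triangular $(2\times 2)$-matrices --- together with the fact that $\mathscr{K}(Z)$ is the minimum non-zero closed two-sided ideal, which holds because $Z$ has a Schauder basis. The paper packages this as a split-exact sequence and the order isomorphism $\mathscr{I}\mapsto\phi^{-1}[\mathscr{I}]$ between the ideal lattice of~$\mathscr{T}_2$ and the closed ideals of~$\mathscr{B}(Z)$ containing~$\mathscr{K}(Z)$, whereas you re-derive that lattice by direct case analysis; in particular the step you flag as the main obstacle is immediate, since every $T\in\mathscr{M}_1$ satisfies $T=\bigl(\begin{smallmatrix} I & 0\\ 0 & 0\end{smallmatrix}\bigr)T+{}$(compact), so once that idempotent and $\mathscr{K}(Z)$ lie in~$\mathscr{I}$ you get $\mathscr{M}_1\subseteq\mathscr{I}$ at once (and, as a minor point of precision, the inessentiality of operators with $\alpha_{1,1}=\alpha_{2,2}=0$ is best justified by noting that their classes form a square-zero, hence nil, ideal of the Calkin algebra, which therefore lies in the radical --- quasinilpotency of a single element alone would not suffice).
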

\noindent
We note that in the diagram, above, the smaller ideal has codimension
one in the larger ideal in each of the inclusions, except the
bottom\-most.

\begin{remark}
  Not many infinite-dimensional Banach spaces~$E$ are known for which
  a complete classification of the closed two-sided ideals of~$\mathscr{B}(E)$
  exists. Indeed, to the best of our knowledge at the time of writing,
  the following list contains all such examples:
  \begin{romanenumerate}
  \item\label{idealclass1} the classical sequence spaces $E = \ell_p(\mathbb{I})$ for
    $1\le p <\infty$ and $E = c_0(\mathbb{I})$, where~$\mathbb{I}$ is
    an arbitrary infinite index set; these results are due to
    Calkin~\cite{calkin} for countable~$\mathbb{I}$ and $p=2$;
    Gohberg--Markus--Feldman~\cite{gmf} for countable~$\mathbb{I}$ and
    general~$p$ (including~$c_0$); Gramsch~\cite{gr} and
    Luft~\cite{luft} for $p=2$ and arbitrary~$\mathbb{I}$; and
    Daws~\cite{daws} in full generality;
  \item\label{idealclass2} the $c_0$-direct sum of the sequence of finite-dimensional
    Hilbert spaces of increasing dimension, that is, $E =
    \bigl(\bigoplus_{n\in\N}\ell_2^n\bigr)_{c_0}$, and its dual space
    $\bigl(\bigoplus_{n\in\N}\ell_2^n\bigr)_{\ell_1}$ (see~\cite{llr}
    and~\cite{lsz}, respectively);
  \item\label{idealclass3} $E = X_{\text{\normalfont{AH}}}$ by Theorem~\ref{thmAH},
    above;
  \item\label{idealclass4} Tarbard's variants of the
    Argyros--Haydon space: for each~$n\in\N$, there is a Banach
    space~$E$ such that~$E$ admits a strictly singular operator~$S$
    which is nilpotent of order~$n+1$, and every bounded operator
    on~$E$ has the form $\sum_{j=0}^n\alpha_jS^j +K$ for some scalars
    $\alpha_0,\ldots,\alpha_n$ and a compact operator~$K$
    (see~\cite[Theorem~2.1]{tarb});
  \item\label{idealclass5} $E = C(\Omega)$, where $\Omega$ is the Mr\'{o}wka
    space constructed by Koszmider~\cite{kosz}, assuming the Continuum
    Hypothesis (see~\cite[Theorem~5.5]{kk}; this result has also been
    obtained independently by Brooker (unpublished));
  \item\label{idealclass6} certain Banach spaces constructed by
    Motakis, Puglisi and Zisimopoulou~\cite{mpz}: for every countably
    infinite compact metric space~$\Omega$, there is a Banach space~$E$
    such that the Banach algebra~$\mathscr{B}(E)/\mathscr{K}(E)$ is
    isomorphic to the algebra~$C(\Omega)$ of scalar-valued, continuous
    functions defined on~$\Omega$. (The classification of the closed
    two-sided ideals of~$\mathscr{B}(E)$ is not stated explicitly
    in~\cite{mpz}, but it is an easy consequence of
    \cite[Theorem~5.1]{mpz}, together with the following two facts:
    (1)~$E$~is a $\mathscr{L}_\infty$-space, so it has the bounded
    approximation property, and therefore~$\mathscr{K}(E)$ is the
    minimum non-zero closed two-sided ideal of~$\mathscr{B}(E)$;
    (2)~the closed ideals of the Banach algebra~$C(\Omega)$ for a compact
    Hausdorff space~$\Omega$ are precisely the zero sets of the closed
    subsets of~$\Omega$.)
  \end{romanenumerate}
  In each of the cases~\romanref{idealclass1}--\romanref{idealclass5},
  above, the lattice of closed two-sided ideals of~$\mathscr{B}(E)$ is
  linearly ordered, whereas in case~\romanref{idealclass6}, it is
  infinite. Hence the Banach space~$Z$ given by~\eqref{defnspaceZ}
  appears to be the first Banach space~$E$ for which we have a
  complete classi\-fi\-ca\-tion of the lattice of closed two-sided
  ideals of the Banach alge\-bra~$\mathscr{B}(E)$, and this lattice is
  finite, but it is not linearly ordered.
\end{remark}

\noindent 
\textsl{Note added in proof.} We shall here describe another family of
Banach spaces $E$ such that the lattice of closed two-sided ideals
of~$\mathscr{B}(E)$ is finite and not linearly ordered.  For each
$n\in\N$, we apply \cite[Theorem~10.4]{ah} to obtain Banach spaces
$X_1,\ldots, X_n$, each having very few operators, each having a
Schauder basis, and such that every bounded operator from $X_j$ to
$X_k$ is compact whenever $j,k\in\{1,\ldots,n\}$ are distinct.  Take
$m_1,\ldots,m_n\in\N$, and set $E = X_1^{m_1}\oplus\cdots \oplus
X_n^{m_n}$. Then $\mathscr{K}(E)$ is the smallest non-zero closed
two-sided ideal of~$\mathscr{B}(E)$, and we have
\[ \mathscr{B}(E)/\mathscr{K}(E)\cong M_{m_1}(\K)\oplus\cdots\oplus 
M_{m_n}(\K), \] where $M_m(\K)$ denotes the algebra of scalar-valued
$(m\times m)$-matrices. By Wedderburn's structure theorem (see,
\emph{e.g.}, \cite[Theorem~1.5.9]{da}), this shows that every
finite-dimensional, semi-simple complex algebra can arise as the
Calkin algebra of a Banach space. Moreover, we note that the choice
$m_1=\cdots= m_n = 1$ gives a counterpart of the result of Motakis,
Puglisi and Zisimopoulou that we described in~\romanref{idealclass6},
above, in the case where the underlying space~$\Omega$ is finite.

Returning to the general case where $m_1,\ldots,m_n\in\N$ are
arbitrary, we may consider each bounded operator~$T$ on~$E$ as an
operator-valued $(n\times n)$-matrix $(T_{j,k})_{j,k=1}^n$, where we
have \mbox{$T_{j,k}\in\mathscr{B}(X_k^{m_k},X_j^{m_j})$} for each
$j,k$. Since $M_m(\K)$ is simple for each $m\in\N$, the map
\[ N\mapsto \bigl\{ (T_{j,k})_{j,k=1}^n : T_{j,j}\in\mathscr{K}(X_j^{m_j})\ 
(j\notin N)\bigr\} \] is an order isomorphism of the power set of
$\{1,\ldots,n\}$ onto the lattice of non-zero closed two-sided ideals
of~$\mathscr{B}(E)$. Hence the lattice of closed two-sided ideals
of~$\mathscr{B}(E)$ has~$2^n+1$ elements, and it is not linearly
ordered for $n\ge 2$.

Let us finally remark that the ideal lattices obtained in this way are
different from that of~$\mathscr{B}(Z)$ that we described in
Theorem~\ref{2sidedideallatticeofBZ}, above; for instance, none of
these lattices has precisely six elements.\bigskip

After seeing Argyros and Haydon's main results as they were stated in
Theorem~\ref{thmAH}, above, Dales observed that they imply that the
Banach algebra~$\mathscr{B}(X_{\text{\normalfont{AH}}})$ is amenable
\cite[Prop\-o\-si\-tion~10.6]{ah}, thus disproving a long-standing
conjecture of B.~E.~Johnson. In con\-trast, we note
that~$\mathscr{B}(Z)$ does not share this property.

\begin{proposition}\label{BZnonamenable}
  The Banach algebra~$\mathscr{B}(Z)$ is not amenable.
\end{proposition}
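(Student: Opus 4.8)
\noindent
The plan is to pass from $\mathscr{B}(Z)$ to a finite-dimensional, non-semisimple quotient and then to invoke the permanence of amenability under quotients.

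To begin with, I would recall the elementary fact that a quotient of an amenable Banach algebra by a closed two-sided ideal is again amenable, since any bounded algebra homomorphism with dense range carries a virtual diagonal to a virtual diagonal. Applying this with the closed ideal $\mathscr{K}(Z)$, it suffices to prove that $\mathscr{B}(Z)/\mathscr{K}(Z)$ is \emph{not} amenable.

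Next I would identify this quotient. An operator on $Z=X_{\text{AH}}\oplus Y$ is compact exactly when each of the four entries of its matrix~\eqref{matrixrepofoperator} is compact; as $I_{X_{\text{AH}}}$, $I_Y$ and the inclusion $J\colon Y\to X_{\text{AH}}$ are non-compact (each being bounded below on an infinite-dimensional space), it follows that an operator~$T$ written in the form~\eqref{matrixrepofoperator} belongs to $\mathscr{K}(Z)$ precisely when $\alpha_{1,1}=\alpha_{1,2}=\alpha_{2,2}=0$. Thus $\mathscr{B}(Z)/\mathscr{K}(Z)$ is three-dimensional, and a direct computation of the products in~\eqref{matrixrepofoperator} modulo $\mathscr{K}(Z)$ shows that $T+\mathscr{K}(Z)\mapsto\left(\begin{smallmatrix}\alpha_{1,1}&\alpha_{1,2}\\0&\alpha_{2,2}\end{smallmatrix}\right)$ defines an algebra isomorphism of $\mathscr{B}(Z)/\mathscr{K}(Z)$ onto the algebra~$U$ of upper-triangular $(2\times2)$-matrices over~$\K$. (Alternatively, Theorem~\ref{2sidedideallatticeofBZ} already exhibits $\mathscr{B}(Z)/\mathscr{K}(Z)$ as a three-dimensional unital algebra whose Jacobson radical is the one-dimensional ideal $\mathscr{E}(Z)/\mathscr{K}(Z)$; here one uses that $\overline{\mathscr{F}(Z)}=\mathscr{K}(Z)$ because $Z$ has a Schauder basis, so that $\mathscr{E}(Z)/\mathscr{K}(Z)$ really is the radical.)

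Finally I would appeal to the fact that $U$ --- equivalently, any finite-dimensional Banach algebra with non-zero Jacobson radical --- is not amenable. The shortest route is to cite the well-known equivalence of amenability and semisimplicity for finite-dimensional Banach algebras, together with the observation that $U$ is not semisimple, its radical $\K e_{1,2}$ being non-zero. For a self-contained argument I would instead use that an amenable \emph{unital} finite-dimensional algebra $A$ must possess a diagonal, namely an element $M\in A\otimes A$ satisfying $a\cdot M=M\cdot a$ for every $a\in A$ and $\pi(M)=1$, where $\pi\colon A\otimes A\to A$ denotes multiplication; a short calculation then shows that for $A=U$ the single relation $e_{1,2}\cdot M=M\cdot e_{1,2}$ already forces the coefficient of $e_{1,1}\otimes e_{1,1}$ in any such $M$ to vanish, whereas $\pi(M)=e_{1,1}+e_{2,2}$ forces it to equal~$1$. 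This contradiction shows that $U$, hence $\mathscr{B}(Z)/\mathscr{K}(Z)$, and hence $\mathscr{B}(Z)$, is not amenable. I expect no real obstacle here: the substance is light, and the only point that calls for a little care is the middle step --- verifying that $\mathscr{K}(Z)$ is exactly the kernel of the three-dimensional ``symbol'' map and that the induced multiplication is the upper-triangular one (in particular, that the class of the operator which is~$J$ in the $(1,2)$-position and zero elsewhere squares to zero and is annihilated on opposite sides by the two coordinate idempotents).
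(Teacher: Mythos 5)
Your argument is correct and follows essentially the same route as the paper: identify $\mathscr{B}(Z)/\mathscr{K}(Z)$ with the upper-triangular matrix algebra $\mathscr{T}_2$ via the matrix form~\eqref{matrixrepofoperator} and invoke the permanence of amenability under passing to quotients by closed two-sided ideals. The only (inessential) difference is how the non-amenability of $\mathscr{T}_2$ is witnessed --- the paper exhibits a bounded, non-inner derivation onto $\operatorname{rad}\mathscr{T}_2$, whereas you use the non-existence of a diagonal (equivalently, that finite-dimensional amenable algebras are semisimple); both are standard and your coefficient computation checks out.
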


The study of amenability is intimately related to the existence of
approximate identities, as explained in \cite[Section~2.9]{da}, for
instance. Our second main result, which will be proved in
Section~\ref{sectionAIs}, describes what kinds of approximate
identities, if any, can be found in each of the four non-trivial
closed two-sided ideals of~$\mathscr{B}(Z)$. Before we state this
result formally, let us introduce the relevant terminology.

\begin{definition} A net $(e_j)_{j\in\mathbb{J}}$ in a Banach
  algebra~$\mathscr{A}$ is a \emph{left approximate identity} if the
  net $(e_j a)_{j\in\mathbb{J}}$ converges to~$a$ for each
  $a\in\mathscr{A}$, and a \emph{right approximate identity} if the
  net $(ae_j)_{j\in\mathbb{J}}$ converges to~$a$ for each
  $a\in\mathscr{A}$.  If in addition
  $\sup_{j\in\mathbb{J}}\|e_j\|<\infty$, then
  $(e_j)_{j\in\mathbb{J}}$ is a \emph{bounded} left or right
  approximate identity.  A \emph{bounded two-sided approximate
    identity} is a net which is simultaneously a bounded left and
  right approximate identity.
\end{definition}
 
\begin{theorem}\label{thmAIs} \begin{romanenumerate}
  \item\label{thmAIs1} The ideal $\mathscr{M}_1$ has a bounded left
    approximate identity, but it has no right approximate identity.
  \item\label{thmAIs2} The ideal $\mathscr{M}_2$ has a bounded right
    approximate identity, but it has no left approximate identity.
  \item\label{thmAIs3} The ideal $\mathscr{E}(Z) =
    \mathscr{M}_1\cap\mathscr{M}_2$ has no left or right approximate
    identity.
  \item\label{thmAIs4} The ideal $\mathscr{K}(Z)$ has a bounded
    two-sided approximate identity.
  \end{romanenumerate}
\end{theorem}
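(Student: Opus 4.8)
The plan is to read off both the positive and the negative assertions of Theorem~\ref{thmAIs} from the matrix representation~\eqref{matrixrepofoperator}. The operator that governs the whole picture is
\[ \widetilde{J} = \begin{pmatrix} 0 & J\\ 0 & 0\end{pmatrix}\in\mathscr{B}(Z). \]
It lies in each of $\mathscr{M}_1$, $\mathscr{M}_2$ and $\mathscr{E}(Z) = \mathscr{M}_1\cap\mathscr{M}_2$ (as $\alpha_{1,1} = \alpha_{2,2} = 0$ in its representation~\eqref{matrixrepofoperator}), and it is not compact, because the inclusion $J\colon Y\to X_{\mathrm{AH}}$ of the infinite-dimensional space~$Y$ is not compact. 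The crucial observation is that one-sided multiplication of~$\widetilde{J}$ by a member of the appropriate ideal always lands in~$\mathscr{K}(Z)$: an operator $e\in\mathscr{M}_1$, written as in~\eqref{matrixrepofoperator}, has $\alpha_{2,2} = 0$, so its bottom row is compact and
\[ \widetilde{J}e = \begin{pmatrix} JK_{2,1} & JK_{2,2}\\ 0 & 0\end{pmatrix}\in\mathscr{K}(Z), \]
whereas an operator $e\in\mathscr{M}_2$ has $\alpha_{1,1} = 0$, so its left column is compact and
\[ e\widetilde{J} = \begin{pmatrix} 0 & K_{1,1}J\\ 0 & K_{2,1}J\end{pmatrix}\in\mathscr{K}(Z). \]
Since $\mathscr{K}(Z)$ is norm-closed, this immediately excludes the corresponding approximate identities: a right approximate identity $(e_j)$ for~$\mathscr{M}_1$ would force $\widetilde{J} = \lim_j\widetilde{J}e_j\in\mathscr{K}(Z)$, a contradiction; symmetrically~$\mathscr{M}_2$ can have no left approximate identity; and since $\mathscr{E}(Z)\subseteq\mathscr{M}_1\cap\mathscr{M}_2$, both computations apply to~$\mathscr{E}(Z)$, ruling out both left and right approximate identities there. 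This disposes of the negative assertions in~\romanref{thmAIs1} and~\romanref{thmAIs2} and the whole of~\romanref{thmAIs3}.

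For the positive assertions I would use the canonical basis projections. Let $(P_n)_{n\in\N}$ and $(Q_n)_{n\in\N}$ be the sequences of finite-rank projections determined by the Schauder bases of $X_{\mathrm{AH}}$ and~$Y$ supplied by Theorems~\ref{thmAH}\romanref{thmAH2} and~\ref{thesubspaceY}\romanref{prop4ofY}; they are uniformly bounded, converge strongly to $I_{X_{\mathrm{AH}}}$ and $I_Y$, and --- since both bases are \emph{shrinking} --- their adjoints converge strongly to $I_{X_{\mathrm{AH}}^*}$ and $I_{Y^*}$. The standard fact that a bounded, strongly convergent net converges uniformly on relatively compact sets (applied to~$K$ for left multiplication and to~$K^*$ for right multiplication) then yields $\|P_nK - K\|\to 0$ and $\|KP_n - K\|\to 0$ for every compact operator~$K$ mapping into, respectively out of, $X_{\mathrm{AH}}$, and likewise with~$Q_n$. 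Now $e_n := I_{X_{\mathrm{AH}}}\oplus Q_n$ lies in~$\mathscr{M}_1$ (its $(2,2)$-entry~$Q_n$ is compact) and is bounded; left multiplication by~$e_n$ leaves the top row of any $T\in\mathscr{M}_1$ --- which carries the non-compact terms $\alpha_{1,1}I_{X_{\mathrm{AH}}}$ and $\alpha_{1,2}J$ --- untouched, while replacing the bottom row $(K_{2,1},K_{2,2})$ by $(Q_nK_{2,1},Q_nK_{2,2})$, so $\|e_nT - T\|\to 0$; hence $\mathscr{M}_1$ has a bounded left approximate identity, proving~\romanref{thmAIs1}. Symmetrically $e_n := P_n\oplus I_Y\in\mathscr{M}_2$ is a bounded right approximate identity, because right multiplication by~$e_n$ fixes the right column of $T\in\mathscr{M}_2$ and sends the left-column entries $K_{1,1},K_{2,1}$ to $K_{1,1}P_n,K_{2,1}P_n$, which gives~\romanref{thmAIs2}. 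Finally $e_n := P_n\oplus Q_n$ is a bounded sequence of finite-rank projections on~$Z$ with $e_n\to I_Z$ and $e_n^*\to I_{Z^*}$ strongly, so $e_nK\to K$ and $Ke_n\to K$ in norm for every $K\in\mathscr{K}(Z)$, all four matrix entries of~$K$ being compact; thus $\mathscr{K}(Z)$ has a bounded two-sided approximate identity, proving~\romanref{thmAIs4} (equivalently, $(e_n)$ is the sequence of basis projections of the shrinking basis of $Z = X_{\mathrm{AH}}\oplus Y$).

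I do not expect any single step here to be a genuine obstacle: the hard content is already encapsulated in Theorems~\ref{thesubspaceY} and~\ref{2sidedideallatticeofBZ}, and once~\eqref{matrixrepofoperator} is in force the rest is organised bookkeeping around the block structure. The one place that really needs the structural input --- and the only ingredient that breaks the left/right symmetry --- is the estimate $\|KP_n - K\|\to 0$ for compact~$K$, which requires the basis of~$X_{\mathrm{AH}}$ (and of~$Y$) to be shrinking rather than merely to exist; this is precisely why the approximate identities in~\romanref{thmAIs1} and~\romanref{thmAIs2} exist on only one side, and on the side dictated by the shape of~\eqref{matrixrepofoperator}. A minor point to check at each construction is that the proposed~$e_n$ really satisfies the defining condition ($\alpha_{2,2} = 0$ or $\alpha_{1,1} = 0$) of the ideal concerned, which is immediate from~\eqref{defnmaxideal} because the relevant diagonal entry is finite-rank.
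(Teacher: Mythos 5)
Your argument is correct, and for the positive half it is essentially the paper's own: you use the uniformly bounded canonical projections associated with the shrinking bases of $X_{\text{AH}}$ and $Y$, and your approximate identities $I_{X_{\text{AH}}}\oplus Q_n$ for $\mathscr{M}_1$, $P_n\oplus I_Y$ for $\mathscr{M}_2$ and $P_n\oplus Q_n$ for $\mathscr{K}(Z)$ are exactly the paper's choices (there written with the FDD projections $P_{(0,\,n]}$), while the norm-approximation facts you invoke --- uniform convergence on relatively compact sets, Schauder's theorem and shrinkingness for the right-hand version --- are precisely the content of Lemma~\ref{FDDprojapprox}. For the negative half you take a mildly different route: the paper first proves the quantitative Lemma~\ref{distJtocompact} (via Riesz's lemma) that the inclusion $J\colon Y\to X_{\text{AH}}$ has distance exactly $1$ to $\mathscr{K}(Y,X_{\text{AH}})$, and then estimates $\bigl\|\widetilde{J}-\widetilde{J}T\bigr\|\ge\|J-JK_{2,2}\|\ge 1$ for every $T\in\mathscr{M}_1$, where $\widetilde{J}$ is the matrix with $J$ in the top right corner and zeros elsewhere; you instead observe that $\widetilde{J}T$ (resp.\ $T\widetilde{J}$) is compact for every $T\in\mathscr{M}_1$ (resp.\ $T\in\mathscr{M}_2$), so that a right (resp.\ left) approximate identity would force $\widetilde{J}\in\mathscr{K}(Z)$ because $\mathscr{K}(Z)$ is closed, contradicting the non-compactness of the inclusion of the closed infinite-dimensional subspace~$Y$. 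Both arguments pivot on the same operator $\widetilde{J}$ and the same block computation; yours is slightly more elementary (no Riesz-lemma estimate needed), while the paper's yields the sharper quantitative fact that $\widetilde{J}$ stays at distance at least~$1$ from $\widetilde{J}\mathscr{M}_1$. Your appeal to $\mathscr{E}(Z)=\mathscr{M}_1\cap\mathscr{M}_2$ to place $\widetilde{J}$ in $\mathscr{E}(Z)$ is legitimate, since that identification is established in Theorem~\ref{2sidedideallatticeofBZ} before Theorem~\ref{thmAIs} is proved.
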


Our third and final main result uses the Banach space~$Z$ to answer
two questions that were left open in~\cite{dkkkl} regarding the
maximal left ideals of the Banach algebra~$\mathscr{B}(E)$ for an
infinite-dimensional Banach space~$E$. To set the stage for this
result, we require some background information from~\cite{dkkkl},
beginning with the easy observation that, for each non-zero
element~$x$ of~$E$, the set
\begin{equation}\label{fixedideal}
  \mathscr{M\!L}_x = \{ T\in\mathscr{B}(E) : Tx = 0\}
\end{equation}
is a maximal left ideal of~$\mathscr{B}(E)$, and it is generated as a
left ideal by a single operator, namely any projection
$P\in\mathscr{B}(E)$ with $\ker P = \K x$.  The maximal left ideals of
the form~\eqref{fixedideal} were termed \emph{fixed} in~\cite{dkkkl},
inspired by the analogous terminology for ultrafilters, and the
following question was studied extensively:
\begin{quotation}
  \emph{Is every finitely-generated, maximal left ideal of the Banach
    algebra~$\mathscr{B}(E)$ necessarily fixed?}
\end{quotation}
Indeed, a positive answer to this question was established for many
Banach spaces~$E$, but, somewhat surprisingly, it was also shown that
the answer is not always positive: for $E =
X_{\text{\normalfont{AH}}}\oplus\ell_\infty$, the Banach
algebra~$\mathscr{B}(E)$ contains a non-fixed, singly-generated,
maximal left ideal of codimension one, namely
\begin{equation*}
  \mathscr{K}_1 = \left\{\begin{pmatrix} T_{1,1} & T_{1,2}\\ T_{2,1} &
  T_{2,2} \end{pmatrix}\in\mathscr{B}(E) :
  T_{1,1}\ {\normalfont{\text{is compact}}}\right\}.
\end{equation*}  
The Banach space~$E = X_{\text{\normalfont{AH}}}\oplus\ell_\infty$ is
evidently not separable.  We shall show in
Section~\ref{sectionMaxLeftIdeals} that a similar example exists based
on the separable Banach space~$Z$ given by~\eqref{defnspaceZ}. This
example will also enable us to answer another question implicitly left
open in~\cite{dkkkl} (see \cite[Proposi\-tion~2.2]{dkkkl} and the
remark following it) because the non-fixed, finitely-generated maximal
left ideal of $\mathscr{B}(Z)$ that we identify is not generated by
one, but by two operators. More precisely, our result is as follows.

\begin{theorem}\label{Thmnonfixedfgmaxleftideal} 
  The ideals $\mathscr{M}_1$ and $\mathscr{M}_2$ are the only
  non-fixed, maximal left ideals of~$\mathscr{B}(Z),$ and
  \begin{romanenumerate}
  \item\label{Thmnonfixedfgmaxleftideal1} $\mathscr{M}_1$ is generated
    as a left ideal by the two operators
    \begin{equation}\label{ThmnonfixedfgmaxleftidealEq2} 
      \begin{pmatrix} I_{X_{\text{\normalfont{AH}}}} & 0\\ 0 &
        0 \end{pmatrix}\qquad\text{and}\qquad \begin{pmatrix} 0 & J\\ 0
        & 0 \end{pmatrix}, \end{equation} but $\mathscr{M}_1$ is not
    generated as a left ideal by a single bounded operator on~$Z;$
  \item\label{Thmnonfixedfgmaxleftideal2} $\mathscr{M}_2$ is not
    finitely generated as a left ideal.
  \end{romanenumerate}
\end{theorem}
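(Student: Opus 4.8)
The plan is to base everything on two structural facts already available: the matrix representation~\eqref{matrixrepofoperator} of an arbitrary operator on~$Z$, and the observation that~$Z$ has a Schauder basis (being the direct sum of two spaces with shrinking Schauder bases), so that $\mathscr{K}(Z)=\overline{\mathscr{F}(Z)}$ is the smallest non-zero closed ideal of $\mathscr{B}(Z)$, while the quotient map induces an algebra isomorphism of $\mathscr{B}(Z)/\mathscr{K}(Z)$ onto the algebra $\mathsf{T}$ of upper-triangular $(2\times2)$-matrices over~$\K$, carrying $T+\mathscr{K}(Z)$ to $\left(\begin{smallmatrix}\alpha_{1,1}&\alpha_{1,2}\\0&\alpha_{2,2}\end{smallmatrix}\right)$.

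To identify the non-fixed maximal left ideals, I would argue as follows. A maximal left ideal~$\mathscr{L}$ of the unital algebra $\mathscr{B}(Z)$ is automatically closed; and if $\mathscr{L}\not\supseteq\mathscr{F}(Z)$, then $\mathscr{L}+\mathscr{F}(Z)=\mathscr{B}(Z)$, so $I_Z=S+F$ with $S\in\mathscr{L}$ and $F$ of finite rank. The Fredholm operator $S=I_Z-F$ cannot be invertible (otherwise $\mathscr{L}=\mathscr{B}(Z)$), so $V:=\ker S$ is a non-zero finite-dimensional subspace, and $\mathscr{L}$, containing~$S$, contains $\{T\in\mathscr{B}(Z):\ker T\supseteq V\}=\bigcap_{j=1}^m\mathscr{M\!L}_{z_j}$ for a basis $z_1,\dots,z_m$ of~$V$. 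Identifying $\mathscr{B}(Z)\big/\bigcap_j\mathscr{M\!L}_{z_j}$ with the semisimple left module~$Z^m$ (using that~$Z$ is a simple module and $\operatorname{End}_{\mathscr{B}(Z)}(Z)=\K$), one checks that the maximal left ideals above $\bigcap_j\mathscr{M\!L}_{z_j}$ are exactly the $\mathscr{M\!L}_z$ with $0\ne z\in V$, whence~$\mathscr{L}$ is fixed. Thus every maximal left ideal is either fixed or contains $\mathscr{K}(Z)$, and the latter correspond bijectively to the maximal left ideals of~$\mathsf{T}$, of which there are precisely two, $\{\alpha_{1,1}=0\}$ and $\{\alpha_{2,2}=0\}$, with preimages $\mathscr{M}_2$ and~$\mathscr{M}_1$; since $\bigcap_{T\in\mathscr{M}_k}\ker T=\{0\}$ for $k=1,2$, neither is fixed. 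This proves the first assertion.

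For~\romanref{Thmnonfixedfgmaxleftideal1}, write $P$ and $Q$ for the two operators in~\eqref{ThmnonfixedfgmaxleftidealEq2}. Right-multiplying a general operator~\eqref{matrixrepofoperator} by~$P$ and by~$Q$ shows that $\mathscr{B}(Z)P+\mathscr{B}(Z)Q$ is the set of all $\left(\begin{smallmatrix}A&BJ\\C&DJ\end{smallmatrix}\right)$ with $A,B\in\mathscr{B}(X_{\mathrm{AH}})$ and $C,D\in\mathscr{B}(X_{\mathrm{AH}},Y)$, which is plainly contained in~$\mathscr{M}_1$. Comparing with~\eqref{matrixrepofoperator}, the equality $\mathscr{B}(Z)P+\mathscr{B}(Z)Q=\mathscr{M}_1$ reduces to showing that every compact operator from~$Y$ into~$X_{\mathrm{AH}}$, respectively into~$Y$, extends to a compact operator from~$X_{\mathrm{AH}}$ into~$X_{\mathrm{AH}}$, respectively into~$Y$. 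For finite-rank operators this is immediate by applying the Hahn--Banach theorem to each of finitely many coordinate functionals; for general compact operators it follows from the construction of~$Y$ in Section~\ref{section2} (the point being that $X_{\mathrm{AH}}/Y$ retains a suitable bounded approximation property). Finally, $\mathscr{M}_1$ is not singly generated: if $\mathscr{M}_1=\mathscr{B}(Z)T_0$, then passing to~$\mathsf{T}$ gives $\mathscr{M}_1/\mathscr{K}(Z)=\mathsf{T}\,\bar{T}_0$, which is impossible since $\mathscr{M}_1/\mathscr{K}(Z)$ is the two-dimensional left ideal $\{\alpha_{2,2}=0\}$ of~$\mathsf{T}$, whereas $\mathsf{T}\,\bar{T}_0$ is at most one-dimensional for any $\bar{T}_0$ lying in that ideal.

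For~\romanref{Thmnonfixedfgmaxleftideal2}, suppose towards a contradiction that $\mathscr{M}_2=\sum_{i=1}^n\mathscr{B}(Z)T_i$. Since each $T_i\in\mathscr{M}_2$, its $(1,1)$- and $(2,1)$-entries are compact, say $K_i\in\mathscr{K}(X_{\mathrm{AH}})$ and $L_i\in\mathscr{K}(X_{\mathrm{AH}},Y)$, by~\eqref{matrixrepofoperator}. Using~\eqref{matrixrepofoperator} once more to expand $S_iT_i$ for arbitrary $S_i\in\mathscr{B}(Z)$, one finds that the $(1,1)$-entries of the operators in $\sum_i\mathscr{B}(Z)T_i$ form the set $\mathscr{G}+F$, where $F=\spa\{K_1,\dots,K_n,JL_1,\dots,JL_n\}$ is finite-dimensional and $\mathscr{G}=\{WG:W\in\mathscr{K}(X_{\mathrm{AH}}^{\,n}\oplus Y^{\,n},X_{\mathrm{AH}})\}$ for the fixed compact operator $G=(K_1,\dots,K_n,L_1,\dots,L_n)\colon X_{\mathrm{AH}}\to X_{\mathrm{AH}}^{\,n}\oplus Y^{\,n}$. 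As $\left(\begin{smallmatrix}K&0\\0&0\end{smallmatrix}\right)\in\mathscr{M}_2$ for every $K\in\mathscr{K}(X_{\mathrm{AH}})$, this forces $\mathscr{G}+F=\mathscr{K}(X_{\mathrm{AH}})$. But the adjoint of every operator in $\mathscr{G}+F$ has range inside the fixed subspace $\ran G^*+\sum_{i}\bigl(\ran K_i^*+\ran(JL_i)^*\bigr)$ of~$X_{\mathrm{AH}}^*$, which is a finite sum of $\sigma$-compact subspaces, hence meager and proper; choosing $\phi\in X_{\mathrm{AH}}^*$ outside it and any $u\ne0$ in~$X_{\mathrm{AH}}$, the rank-one operator $\phi\otimes u$ lies in $\mathscr{K}(X_{\mathrm{AH}})$ but not in $\mathscr{G}+F$, because its adjoint has range $\K\phi$ --- a contradiction. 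Hence $\mathscr{M}_2$ is not finitely generated.

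The step I expect to be the real obstacle is the exact identification $\mathscr{B}(Z)P+\mathscr{B}(Z)Q=\mathscr{M}_1$ in~\romanref{Thmnonfixedfgmaxleftideal1}: it hinges on the compact-operator extension property of the subspace~$Y$, and this is precisely where the detailed Bourgain--Delbaen construction of~$Y$ from Section~\ref{section2} is genuinely needed, the remaining ingredients (the Fredholm--module argument for fixedness, the passage to~$\mathsf{T}$, and the $\sigma$-compactness argument for~\romanref{Thmnonfixedfgmaxleftideal2}) being comparatively soft.
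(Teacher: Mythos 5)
The one step of your proposal that does not hold up is exactly the one you flag as the crux of part (i): the assertion that every compact operator from $Y$ into $X_{\mathrm{AH}}$, respectively into $Y$, extends to a compact operator defined on all of $X_{\mathrm{AH}}$. Your justification --- Hahn--Banach for finite-rank operators plus ``$X_{\mathrm{AH}}/Y$ retains a suitable bounded approximation property'' --- is not a proof: extending finite-rank approximants one at a time gives no control whatsoever on the extensions of the differences, so the extended operators need not converge, and an approximation property of the quotient is not a sufficient condition for the compact extension property (nor is it established anywhere in the paper). Since $Y$ is uncomplemented in $X_{\mathrm{AH}}$, bounded extensions are not automatic either (a bounded extension of $I_Y$ would be a projection onto $Y$), so this step genuinely needs an argument. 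The paper closes it with Grothendieck's extension theorem (its Theorem~\ref{lindenstraussextthm}, via Lindenstrauss): compact operators into a space whose dual is isomorphic to $L_1(\mu)$ extend compactly from any subspace of any superspace; this applies because $Y^*\cong\ell_1$ and $X_{\mathrm{AH}}^*\cong\ell_1$ (Theorems~\ref{thesubspaceY}\romanref{prop2ofY} and~\ref{thmAH}\romanref{thmAH3}) --- inputs you have available but never invoke. Note also that the relevant hypothesis concerns the \emph{target} spaces, not the quotient $X_{\mathrm{AH}}/Y$, so your diagnosis of where the Bourgain--Delbaen details enter is off: beyond $Y^*\cong\ell_1$, no further specifics of the construction are needed here. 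With Grothendieck's theorem inserted, your reduction (computing $\mathscr{B}(Z)P+\mathscr{B}(Z)Q$ and matching entries) agrees with the paper's identity $T=TP+\bigl(\begin{smallmatrix}\alpha_{1,2}I+\widetilde{K}_{1,2}&0\\ \widetilde{T}_{2,2}&0\end{smallmatrix}\bigr)Q$.

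The rest of your proposal is correct, and in places takes a different route from the paper. For the classification of the non-fixed maximal left ideals, the paper simply cites \cite[Corollary~4.1]{dkkkl} to get $\mathscr{K}(Z)\subseteq\mathscr{L}$ and then uses the order isomorphism with left ideals of $\mathscr{T}_2$; your Fredholm/semisimple-module argument reproves the needed dichotomy from scratch, which is fine (and self-contained), at the cost of some length. Your proof that $\mathscr{M}_1$ is not singly generated, via the image in $\mathscr{T}_2$ being a cyclic (hence at most one-dimensional) left ideal inside the two-dimensional ideal $\mathscr{R}_1$, is a clean repackaging of the paper's direct computation with the two witnesses $P$ and $Q$. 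Finally, for (ii) the paper invokes \cite[Corollary~4.7]{dkkkl} (the map $z\mapsto(R_1z,\ldots,R_nz)$ is bounded below) together with stability of upper semi-Fredholm operators under compact perturbation, whereas you argue directly that the $(1,1)$-entries of a finitely generated left subideal of $\mathscr{M}_2$ have adjoints whose ranges lie in a fixed $\sigma$-compact (hence meager, hence proper) subspace of $X_{\mathrm{AH}}^*$, so they cannot exhaust $\mathscr{K}(X_{\mathrm{AH}})$; this Baire-category argument is correct, avoids the external citation and Fredholm theory, and is a genuine (arguably more elementary) alternative.
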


\begin{remark}
In a post on \textsl{Mathematics Stack Exchange}, Petry~\cite{petry}
asked whether there is a one-sided version of the Nakayama lemma, in
the following specific sense: let $R$ be a unital non-commutative
ring, and let $L$ be a finitely-generated left ideal of~$R$ such that
$L = L\cdot L$ (that is, each element of~$L$ can be written as the sum
of products of elements of~$L$). Must $L$ be generated (as a left
ideal) by a single idempotent element?

In reply, Schwiebert outlined an example which shows that the answer
is in general nega\-tive. We observe that our results provide another
such example. Indeed, let \mbox{$R = \mathscr{B}(Z)$}, and let $L =
\mathscr{M}_1$.
Theorem~\ref{Thmnonfixedfgmaxleftideal}\romanref{Thmnonfixedfgmaxleftideal1}
shows that $L$ is finitely generated, but not by a single element
(idempotent or not), while Theorem~\ref{thmAIs}\romanref{thmAIs1} in
tandem with Cohen's Factorization Theorem (see, \emph{e.g.,}
\cite[Corollary~2.9.25]{da}) implies that each element of $L$ can be
written as the product of two elements of~$L$.
Being a Banach algebra, this example has a very different flavour from
Schwiebert's, which is based on an algebra over a finite field
constructed by Andruszkiewicz and Puczy\-\l{}ow\-ski~\cite{ap}.
\end{remark}

\section{The construction of the subspace~$Y$ and the proof of
  Theorem~\ref{thesubspaceY}}\label{section2}
\subsection*{Schauder decompositions} 
Let $E$ be a Banach space. A sequence $(F_j)_{j\in\N}$ of non-zero
subspaces of~$E$ is a \emph{Schauder decomposition} for~$E$ if, for
each $x\in E$, there is a unique sequence $(x_j)_{j\in\N}$, where
$x_j\in F_j$ for each $j\in\N$, such that the series
$\sum_{j=1}^\infty x_j$ is norm-convergent with sum~$x$.  In this
case, for each $n\in\N$, we can define a
projection~$P_n\in\mathscr{B}(E)$ by $P_nx = \sum_{j=1}^n x_j$; this
is the \emph{$n^{\text{th}}$ canonical projection} associated with the
decomposition. The number $\sup_{n\in\N}\|P_n\|$ turns out to be
finite; this is the \emph{decomposition constant}.

A Schauder decomposition $(F_j)_{j\in\N}$ for $E$ is:
\begin{itemize}
\item \emph{shrinking} if $\|x^* - P^*_n x^*\|\to 0$ for each $x^*\in
  E^*;$
\item \emph{finite-dimensional} (or an \emph{FDD} for short) if $\dim
  F_j<\infty$ for each $j\in\N$.\\
  (Note: the case where each~$F_j$ is one-dimensional, say $F_j = \K
  b_j\ (j\in\N)$, corresponds to $(b_j)_{j\in\N}$ being a Schauder
  basis for~$E$.)
\end{itemize}

We shall require the following elementary observation concerning
compact operators into or out of a Banach space with an FDD. It goes
back to at least~\cite[Remark, p.~14]{bp} in the case of a single
Banach space with a Schauder basis. For completeness, we outline a
proof.
\begin{lemma}\label{FDDprojapprox}
  Let $D$ and $E$ be Banach spaces, where~$E$ has an FDD, and denote
  by $(P_n)_{n\in\N}$ the canonical projections associated with this
  FDD.
\begin{romanenumerate}
\item\label{FDDprojapprox1} For each bounded operator $S\colon D\to
  E,$ the following two conditions are equivalent:
  \begin{alphenumerate}
    \item\label{FDDprojapprox1a} $S$ is compact;
    \item\label{FDDprojapprox1b} $\| S - P_nS\|\to 0$ as $n\to\infty$.
  \end{alphenumerate}
\item\label{FDDprojapprox2} Suppose that the FDD for~$E$ is shrinking.
  Then, for each bounded operator \mbox{$T\colon E\to D,$} the following three
  conditions are equivalent:
  \begin{alphenumerate}
    \item\label{FDDprojapprox2a} $T$ is compact;
    \item\label{FDDprojapprox2b} $\| T - TP_n\|\to 0$ as $n\to\infty;$
    \item\label{FDDprojapprox2c} $\| Tx_j\|\to 0$ as $j\to\infty$ for
      every bounded block sequence $(x_j)_{j\in\N}$ with respect to
      the FDD for~$E$.
  \end{alphenumerate}
\end{romanenumerate}
\end{lemma}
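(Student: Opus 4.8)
The plan is to prove the two parts of Lemma~\ref{FDDprojapprox} by separate, essentially standard arguments, handling the ``hard'' implications via a blocking/gliding-hump technique.

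\medskip

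\noindent\textbf{Part \romanref{FDDprojapprox1}.} First I would dispose of the easy direction \alphref{FDDprojapprox1b}$\Rightarrow$\alphref{FDDprojapprox1a}: each $P_nS$ has finite rank since $P_n$ does, so if $\|S-P_nS\|\to 0$ then $S$ is a norm-limit of finite-rank operators, hence compact. For \alphref{FDDprojapprox1a}$\Rightarrow$\alphref{FDDprojapprox1b}, suppose $S$ is compact but $\|S-P_nS\|\not\to 0$. Writing $Q_n = I_E - P_n$, this means there exist $\epsilon>0$, a subsequence $(n_k)$, and unit vectors $d_k\in D$ with $\|Q_{n_k}Sd_k\|\ge\epsilon$. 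By compactness of $S$, pass to a further subsequence so that $Sd_k\to x$ for some $x\in E$; then $\|Q_{n_k}x\|\ge \|Q_{n_k}Sd_k\| - \|Q_{n_k}\|\,\|Sd_k - x\|\ge\epsilon - \beta\|Sd_k-x\|$, where $\beta = \sup_n\|Q_n\| \le 1 + \sup_n\|P_n\|<\infty$ is controlled by the decomposition constant. Since $\|Q_nx\|\to 0$ for every fixed $x\in E$ (this is exactly the statement that the FDD expansion of $x$ converges), the right-hand side tends to~$\epsilon>0$ while the left tends to~$0$, a contradiction.

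\medskip

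\noindent\textbf{Part \romanref{FDDprojapprox2}.} Here \alphref{FDDprojapprox2b}$\Rightarrow$\alphref{FDDprojapprox2a} is again immediate: $TP_n$ has finite rank, so $T$ is a norm-limit of finite-rank operators. For \alphref{FDDprojapprox2a}$\Rightarrow$\alphref{FDDprojapprox2b} I would argue by duality, using that the FDD is shrinking: $\|T - TP_n\| = \|T^* - P_n^*T^*\|$, and $T^*\colon D^*\to E^*$ is compact because $T$ is; now apply part~\romanref{FDDprojapprox1} to $T^*$, noting that $(P_n^*)_{n\in\N}$ are precisely the canonical projections associated with the (shrinking) FDD, so that $\|T^*-P_n^*T^*\|\to 0$. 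The only point needing care is that shrinkingness is exactly what guarantees the adjoint projections $P_n^*$ form the canonical FDD projections on $E^*$ with $\|x^*-P_n^*x^*\|\to 0$, which is the hypothesis needed to invoke part~\romanref{FDDprojapprox1}.

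\medskip

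\noindent\textbf{The equivalence with \alphref{FDDprojapprox2c}.} For \alphref{FDDprojapprox2b}$\Rightarrow$\alphref{FDDprojapprox2c}, let $(x_j)$ be a bounded block sequence, say supported on intervals $(m_j, m_{j+1}]$ with $m_j\to\infty$ and $\sup_j\|x_j\|\le C$; then $P_{m_j}x_j = 0$, so $\|Tx_j\| = \|(T-TP_{m_j})x_j\|\le C\,\|T-TP_{m_j}\|\to 0$. The converse \alphref{FDDprojapprox2c}$\Rightarrow$\alphref{FDDprojapprox2b} is the main obstacle and the one genuinely combinatorial step: assuming $\|T-TP_n\|\not\to 0$, pick $\epsilon>0$, indices $n_1<n_2<\cdots$ and unit vectors $y_k\in E$ with $\|T(I-P_{n_k})y_k\|\ge\epsilon$. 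Put $u_k = (I-P_{n_k})y_k$; these need not be finitely supported, but $\|u_k\|\le\beta$, and I would use that $\|P_mu_k\|\to 0$ as $m\to\infty$ for each fixed $k$ (again FDD convergence) together with $\|(I-P_m)u_k\|\to 0$ as... no — rather, a standard diagonal ``gliding hump'': choose increasing indices $p_1<q_1<p_2<q_2<\cdots$ and, passing to a subsequence of the $u_k$'s, arrange that $\|P_{p_k}u_{k}\|$ and $\|(I-P_{q_k})u_k\|$ are both less than $\epsilon/(4\|T\|+1)$ (possible since for fixed $k$ the first tends to~$0$ as $p_k$ grows and... here one must instead first fix which $u_k$ to keep, exploiting $\|(I-P_m)u_k\|\to 0$ in $m$ for fixed $k$ and $\|P_m u_k\|$ small for $m\le n_k$). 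Setting $x_k = (P_{q_k}-P_{p_k})u_k$ yields a bounded block sequence with $\|Tx_k\|\ge\|Tu_k\| - \|T\|\|u_k - x_k\|\ge\epsilon - \|T\|\cdot\epsilon/(4\|T\|+1) > \epsilon/2$, contradicting \alphref{FDDprojapprox2c}. The bookkeeping in selecting the blocks so that both ``ends'' of each $u_k$ are negligible is the one place where some care is required, but it is entirely routine given the finiteness of the decomposition constant.
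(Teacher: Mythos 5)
Your argument is correct and follows essentially the same route as the paper: finite-rank approximation for the easy directions, Schauder's theorem together with the shrinking hypothesis to dualize part (i) and obtain (a)$\Leftrightarrow$(b) of part (ii), and a gliding-hump blocking for (c)$\Rightarrow$(b); the paper's converse in part (i) shows directly that $(Sx_i)$ has no Cauchy subsequence rather than extracting a limit point, and its (c)$\Rightarrow$(b) step produces the blocks $(P_{j_i}-P_{k_i})w_i$ in one stroke, but these differences are cosmetic. The only wobble is in your (c)$\Rightarrow$(b) bookkeeping: $\|P_m u_k\|$ does \emph{not} tend to $0$ as $m\to\infty$ (it tends to $\|u_k\|$); what makes the construction work --- and what your self-correction gestures at --- is that $P_m u_k = 0$ for every $m\le n_k$, so one takes $p_k = n_k$, chooses $q_k$ so that $\|(I_E-P_{q_k})u_k\|$ is small, and then selects the next index $n_{k+1}>q_k$ (possible because infinitely many $n$ satisfy $\|T(I_E-P_n)\|>\epsilon$), which yields the required bounded block sequence.
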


\begin{proof} Let $C = \sup_{n\in\N}\|P_n\|<\infty$ be the decomposition constant. 

\romanref{FDDprojapprox1}. The implication
\alphref{FDDprojapprox1b}$\Rightarrow$\alphref{FDDprojapprox1a} is
clear because $P_n$ has finite-dimensional image for each $n\in\N$.
Conversely, suppose contrapositively that, for some $\epsilon>0$ and
each $m\in\N$, there is an integer $n\ge m$ such that $\|
(I_E-P_n)S\|>\epsilon$. By recursion, we can choose a sequence
$(x_j)_{j\in\N}$ of unit vectors in~$D$ and a strictly increasing
sequence $(k_j)_{j\in\N}$ of natural numbers such that
$\| (I_E - P_{k_j})Sx_j\| > \epsilon$ and
$\| (I_E - P_m)Sx_j\| < \epsilon/2$ whenever $j,m\in\N$ and $m\ge
k_{j+1}$.
This implies that
\[ (C+1)\| Sx_{i+j} - Sx_i\|\ge \|(I_E-P_{k_{i+j}})Sx_{i+j}\| - 
\|(I_E-P_{k_{i+j}})Sx_i\| > \frac{\epsilon}{2}\qquad (i,j\in\N), \]
which shows that no subsequence of $(Sx_i)_{i\in\N}$ is Cauchy, and
therefore the operator~$S$ is not compact.

\romanref{FDDprojapprox2}. The equivalence of
conditions~\alphref{FDDprojapprox2a} and~\alphref{FDDprojapprox2b}
follows by dualizing~\romanref{FDDprojapprox1} and using Schauder's
theorem together with the fact that $(P_n^*)_{n\in\N}$ are the
canonical projections associated with an FDD for the dual space~$E^*$
of~$E$.

The implication
\alphref{FDDprojapprox2b}$\Rightarrow$\alphref{FDDprojapprox2c} is
easy because, for every block sequence $(x_j)_{j\in\N}$ in~$E$ and
each $n\in\N$, we can find $j_0\in\N$ such that $P_nx_j = 0$ whenever
$j\ge j_0$.

\alphref{FDDprojapprox2c}$\Rightarrow$\alphref{FDDprojapprox2b}. Suppose
contrapositively that, for some $\epsilon>0$ and each $m\in\N$, there
is an integer $k\ge m$ such that $\| T(I_E-P_k)\| >\epsilon$.  Then we
can find a unit vector $w\in E$ and a further integer $j>k$ such that
$\| T(P_j-P_k)w\| >\epsilon$, and hence we can recursively choose
integers $1\le k_1<j_1\le k_2<j_2\le\cdots$ and unit vectors
$w_1,w_2,\ldots\in E$ such that $\| T(P_{j_i}-P_{k_i})w_i\| >\epsilon$
for each $i\in\N$. This implies that $(x_i)_{i\in\N} :=
((P_{j_i}-P_{k_i})w_i)_{i\in\N}$ is a $2C$-bounded block sequence for
which~\alphref{FDDprojapprox2c} fails.
\end{proof}

\subsection*{The Bourgain--Delbaen
  construction} Argyros and Haydon used the \emph{Bourgain--Del\-baen
  construction}~\cite{bg} to define their Banach
space~$X_{\text{\normalfont{AH}}}$. We shall now summarize those parts
of this method that are required for our present purposes. We follow
the notation and terminology used in~\cite{ah} as far as possible,
with the notable exception that our focus is on both real and
complex scalars, whereas \cite{ah} considered real scalars only. For
this reason, it is convenient to introduce a single symbol for the
following countable, dense subfield of the scalar field~$\K$ that will
play the role of the rationals in the real case:
\begin{equation}\label{defnsubfieldL}
\subfield = \begin{cases} \Q\ 
&\text{for}\ \K=\R\\ \Q+\mathrm{i}\Q\ &\text{for}\ \K=\C. \end{cases} 
\end{equation}

For a (non-empty, countable) set~$\Gamma$, we consider the Banach
spaces
\[ \ell_\infty(\Gamma) = \Bigl\{ x\colon\Gamma\to\K :
\sup_{\gamma\in\Gamma}|x(\gamma)|<\infty\Bigr\}\quad\text{and}\quad
\ell_1(\Gamma) = \biggl\{ x^*\colon\Gamma\to\K :
\sum_{\gamma\in\Gamma}|x^*(\gamma)|<\infty\biggr\}, \]
and identify~$\ell_\infty(\Gamma)$ with the dual space
of~$\ell_1(\Gamma)$ via the duality bracket
\[ \langle x^*, x\rangle = \sum_{\gamma\in\Gamma} x^*(\gamma)
x(\gamma)\qquad (x^*\in\ell_1(\Gamma),\, x\in\ell_\infty(\Gamma)). \]
We write $e_\gamma$ and~$e^*_\gamma$ for the elements
of~$\ell_\infty(\Gamma)$ and~$\ell_1(\Gamma)$, respectively, given
by \[ e_\gamma(\gamma) = 1 = e^*_\gamma(\gamma)\qquad\text{and}\qquad
e_\gamma(\eta) = 0 = e^*_\gamma(\eta)\qquad
(\eta\in\Gamma\setminus\{\gamma\}). \] Let $p=1$ or $p=\infty$. Then
$\supp x$ denotes the \emph{support} of an element
$x\in\ell_p(\Gamma)$. Given a non-empty subset~$\Delta$ of~$\Gamma$,
we identify $\ell_p(\Delta)$ with the subspace $\{x\in\ell_p(\Gamma) :
\supp x\subseteq\Delta\}$ of~$\ell_p(\Gamma)$.

The Bourgain--Delbaen construction, as Argyros and Haydon present it,
begins with the singleton set~$\Delta_1 = \{1\}$ and the functional
$c_1^* =0$.  A sequence $(\Delta_n)_{n\in\N}$ of non-empty, finite,
disjoint sets is then defined recursively, together with functionals
$c_\gamma^*\in\spa\{ e_\eta^* : \eta\in\Gamma_n\}$ for each $n\in\N$
and $\gamma\in\Delta_{n+1}$, where $\Gamma_n :=
\bigcup_{j=1}^n\Delta_j$, in such a way that the sequence
\[ (d_\gamma^*)_{\gamma\in\Gamma} := (e^*_\gamma -
c^*_\gamma)_{\gamma\in\Gamma} \] is a Schauder basis for the Banach
space~$\ell_1(\Gamma)$, where $\Gamma := \bigcup_{j\in\N}\Delta_j$,
endowed with the lexicographic order induced by
$\Delta_1,\Delta_2,\ldots$ (The finite sets $\Delta_1,\Delta_2,\ldots$
are \emph{a priori} unordered; they can each be given an arbitrary
linear order to ensure that~$\Gamma$ is linearly ordered.)  In
particular, the finite-dimensional subspaces
$\spa\{d_\gamma^*:\gamma\in\Delta_n\}\ (n\in\N)$ form an FDD
for~$\ell_1(\Gamma)$. We write $P_{(0,\,n]}^*$ for the $n^{\text{th}}$
canonical projection on~$\ell_1(\Gamma)$ associated with this
decomposition;  that is, $P_{(0,\,n]}^*$ is given by
$P_{(0,\,n]}^*d_\gamma^* = d_\gamma^*$ if $\gamma\in\Gamma_n$ and
$P_{(0,\,n]}^*d_\gamma^* = 0$ otherwise. For later reference, we note
that the image of~$P^*_{(0,\,n]}$ is given by 
\begin{equation}\label{imagepfPnstar}
  \spa\{d_\gamma^* : \gamma\in\Gamma_n\} = \spa\{e_\gamma^*:
  \gamma\in\Gamma_n\} = \ell_1(\Gamma_n). \end{equation}

Let $(d_\gamma)_{\gamma\in\Gamma}$ be the sequence of coordinate
functionals in \mbox{$\ell_1(\Gamma)^* = \ell_\infty(\Gamma)$}
associated with the Schauder basis $(d_\gamma^*)_{\gamma\in\Gamma}$
for~$\ell_1(\Gamma)$. The \emph{Bourgain--Delbaen space}~$X(\Gamma)$
determined by the set~$\Gamma$ is now defined as the closed subspace
of~$\ell_\infty(\Gamma)$ spanned by \mbox{$\{d_\gamma :
  \gamma\in\Gamma\}$}, so that, by definition,
$(d_\gamma)_{\gamma\in\Gamma}$ is a Schauder basis
for~$X(\Gamma)$. Denote by~$P_{(0,\,n]}$ the adjoint of the
projection~$P_{(0,\,n]}^*$ for each $n\in\N$. Since the image
of~$P_{(0,\,n]}$ is equal to $\spa\{d_\gamma : \gamma\in\Gamma_n\}$, we 
may consider~$P_{(0,\,n]}$ as an operator into~$X(\Gamma)$. We observe 
that the subspaces 
\[ M_n := \spa\{d_\gamma:\gamma\in\Delta_n\}\qquad  (n\in\N) \] 
form an FDD for~$X(\Gamma)$, and $(P_{(0,\,n]}|X(\Gamma))_{n\in\N}$ 
are the associated projections.

Let $n\in\N$. By~\eqref{imagepfPnstar}, we may regard~$P^*_{(0,\,n]}$ as a 
surjection onto~$\ell_1(\Gamma_n)$.  The adjoint of this operator, which 
we shall denote
by~$i_n\colon\ell_\infty(\Gamma_n)\to\ell_\infty(\Gamma)$, plays an
important role in the study of Bourgain--Delbaen spaces.  It is an
extension operator, in the sense that $i_n(x)(\gamma) = (x)(\gamma)$
for each $x\in\ell_\infty(\Gamma_n)$ and $\gamma\in\Gamma_n$, and it
satisfies
\begin{equation}\label{theembeddingin}
    \|x\|_\infty\le
    \|i_n(x)\|_\infty\le M\|x\|_\infty\qquad
    (x\in\ell_\infty(\Gamma_n)), 
\end{equation}
where~$M$ is the basis constant
of~$(d_\gamma^*)_{\gamma\in\Gamma}$. We can describe~$i_n$ explicitly
by the formula $i_n = P_{(0,\,n]}|_{\ell_\infty(\Gamma_n)}$. In
  particular, its image is spanned by $\{d_\gamma :
  \gamma\in\Gamma_n\}$, and so we may regard~$i_n$ as an operator
  from~$\ell_\infty(\Gamma_n)$ into~$X(\Gamma)$.

Let $x\in\spa\{d_\gamma : \gamma\in\Gamma\}$. By the \emph{range}
of~$x$, we understand the smallest interval~$\I$ of~$\N$ such that
$x\in\spa\bigl\{d_\gamma :
\gamma\in\bigcup_{i\in\I}\Delta_i\bigr\}$. We write~$\ran x$ for the
range of~$x$.  Suppose that $\ran x\subseteq (p,q]$ for some
  non-negative integers $p< q$. Then, as observed in \cite[p.~12]{ah},
  the element $u := x|_{\Gamma_q}\in\ell_\infty(\Gamma_q)$ satisfies
\begin{equation}\label{AHp12iq}
  x = i_q(u)\qquad \text{and}\qquad \supp u\subseteq
  \Gamma_q\setminus\Gamma_p.
\end{equation}
Suppose that $x\ne0$, and set $m = \max\ran x\in\N$. Then we define
the \emph{local support} of~$x$ by \[ \locsupp x :=
\supp(x|_{\Gamma_m}) = \{\gamma\in\Gamma_m : x(\gamma)\ne 0\}. \]
Further, suppose that $x = i_n(w)$ for some $n\in\N$ and
$w\in\ell_\infty(\Gamma_n)$. Then we have $n\ge m$ because
$i_n[\ell_\infty(\Gamma_n)] = \spa\{d_\gamma : \gamma\in\Gamma_n\}$,
and hence
\begin{equation}\label{locsuppeq}
 x(\gamma) = \langle e_\gamma^*,i_n(w)\rangle = \langle
P^*_{(0,n]}e_\gamma^*, w\rangle = w(\gamma)\qquad
  (\gamma\in\Gamma_m), \end{equation}
which proves that $\locsupp x = (\supp w)\cap\Gamma_m$. 

We reserve the term `block sequence' for a block sequence with respect
to the FDD $(M_n)_{\in\N}$, in the following precise sense.  Let~$\I$
be a non-empty (finite or infinite) interval of~$\N$.  A \emph{block
  sequence} indexed by~$\I$ is a sequence $(x_i)_{i\in\I}$
in~$X(\Gamma)\setminus\{0\}$ such that \mbox{$x_i\in\spa\{d_\gamma :
  \gamma\in\Gamma\}$} for each $i\in\I$ and $\max\ran x_{i-1}
<\min\ran x_i$ whenever $i\ne \min\I$. 

\subsection*{The set $\mathbf{\Gamma^\textbf{AH}}$} 
Argyros and Haydon's Banach space~$X_{\text{\normalfont{AH}}}$ is the
Bourgain--Del\-baen space $X(\Gamma^{\text{\normalfont{AH}}})$
determined by a very clever choice of $\Gamma^{\text{\normalfont{AH}}}
:= \bigcup_{j\in\N}\Delta_j^{\text{\normalfont{AH}}}$ that we shall
now attempt to describe, following \cite[Section~4]{ah}. The first
step is to fix two fast-increasing sequences $(m_j)_{j\in\N}$ and
$(n_j)_{j\in\N}$ of natural numbers which satisfy the following
conditions (see \cite[Assumption~2.3]{ah}):
\begin{itemize}
\item $m_1\ge 4$ and $n_1\ge m_1^2;$
\item $m_{j+1}\ge m_j^2$ and $n_{j+1}\ge m_{j+1}^2(4n_j)^{\log_2
    m_{j+1}}$ for each $j\in\N$.
\end{itemize}

The recursive definition of the sets
$(\Delta_n^{\text{\normalfont{AH}}})_{n\in\N}$ and the associated
functionals $(c_\gamma^*)_{\gamma\in\Gamma^{\text{\normalfont{AH}}}}$
requires that several other objects are defined simultaneously, as
part of the same recursion.  Indeed, we shall also choose a strictly
increasing sequence $(N_n)_{n\in\N_0}$ of integers and construct four
maps called `$\rank$', `$\age$', $\sigma$ and `$\weight$'. Each of
these maps will be defined on the
set~$\Gamma^{\text{\normalfont{AH}}}$. The first three will take their
values in~$\N$, while `$\weight$' maps into the set $\{1/m_j :
j\in\N\}$.  The map $\sigma$ must be injective and satisfy
$\sigma(\gamma)>\rank\gamma$ for each
$\gamma\in\Gamma^{\text{\normalfont{AH}}}$.

As we have already mentioned, the recursion begins with the set
$\Delta_1^{\text{\normalfont{AH}}} = \{1\}$ and the functional $c_1^*
= 0$.  We set $N_0 = 0$ and define \[ \rank\gamma = \age\gamma =
1,\qquad \sigma(\gamma) = 2\qquad\text{and}\qquad \weight\gamma =
\frac{1}{m_1}\qquad (\gamma =
1\in\Delta_1^{\text{\normalfont{AH}}}). \]
 
Now assume recursively that, for some $n\in\N$, we have defined the
sets
$\Delta_1^{\text{\normalfont{AH}}},\ldots,\Delta_n^{\text{\normalfont{AH}}}$
and the functionals $c_\gamma^*$ for
$\gamma\in\Gamma_n^{\text{\normalfont{AH}}}$ (where
$\Gamma_n^{\text{\normalfont{AH}}} :=
\bigcup_{j=1}^n\Delta_j^{\text{\normalfont{AH}}}$ by convention, as
above), as well as the integers $N_0<N_1<\cdots<N_{n-1}$ and the
maps
$\rank,\age,\sigma\colon\Gamma_n^{\text{\normalfont{AH}}}\to\N$ and
$\weight\colon\Gamma_n^{\text{\normalfont{AH}}}\to\{1/m_j : j\in\N\}$,
where $\sigma$ is injective and satisfies $\sigma(\gamma)>\rank\gamma$
for each $\gamma\in\Gamma_n^{\text{\normalfont{AH}}}$.  Choose
$N_n>N_{n-1}$ such that the set
\begin{align*}
B_{p,n} :=
\biggl\{\!\!\sum_{\ \eta\in\Gamma_n^{\text{\normalfont{AH}}}\setminus\Gamma_p^{\text{\normalfont{AH}}}}\!\!a_\eta
e_\eta^* :\ &a_\eta\in\subfield,\ \sum_{\eta}|a_\eta|\le 1\ \text{and
  the}\\[-3ex] &\text{denominator
  of}\ a_\eta\ \text{divides}\ N_n!\ \text{for
  each}\ \eta\in\Gamma_n^{\text{\normalfont{AH}}}\setminus\Gamma_p^{\text{\normalfont{AH}}}\biggr\}
\end{align*}
is a $2^{-n}$-net in the unit ball of
$\ell_1(\Gamma_n^{\text{\normalfont{AH}}}\setminus\Gamma_p^{\text{\normalfont{AH}}})$
for each $p\in\{0,1,\ldots,n-1\}$, where we have introduced 
$\Gamma_0^{\text{\normalfont{AH}}} := \emptyset$ for convenience. (When
talking about `the denominator' of an element~$a_\eta$ of~$\subfield$
in the complex case, we suppose that $a_\eta$ has been written in the
form $a_\eta = (j+k\mathrm{i})/m$ for some $j,k\in\Z$ and $m\in\N$.)  We
admit into~$\Delta_{n+1}^{\text{\normalfont{AH}}}$ elements~$\gamma$
of two types:
\begin{romanenumerate}
\item Elements of \emph{type 1} are triples of the form \[ \gamma =
  \Bigl(n+1,\frac{1}{m_j},b^*\Bigr), \] where $b^*\in B_{0,n}$ and
  \mbox{$j\in\{1,\ldots,n+1\}$}. If~$j$ is even, then we admit
  each~$\gamma$ of this form
  into~$\Delta_{n+1}^{\text{\normalfont{AH}}}$, whereas if~$j$ is odd,
  we admit~$\gamma$ into~$\Delta_{n+1}^{\text{\normalfont{AH}}}$ if
  and only if $b^* = e_\eta^*$, where
  $\eta\in\Gamma_n^{\text{\normalfont{AH}}}$ has weight $1/m_{4i-2}$
  for some~$i\in\N$, and this weight satisfies
  $1/m_{4i-2}<1/n_j^2$. In both cases we define \[ c_\gamma^*=
  \frac{b^*}{m_j},\qquad \rank\gamma = n+1,\qquad \weight\gamma =
  \frac{1}{m_j}\qquad\text{and}\qquad \age\gamma = 1. \]
\item Elements of \emph{type 2} are quadruples of the form \[ \gamma =
  \Bigl(n+1, \xi,\frac{1}{m_j},b^*\Bigr), \] where
  \mbox{$j\in\{1,\ldots,n+1\}$},
  $\xi\in\Delta_p^{\text{\normalfont{AH}}}$ for some
  $p\in\{1,\ldots,n-1\}$, $\weight\xi = 1/m_j$, \mbox{$\age\xi<n_j$}
  and $b^*\in B_{p,n}$. Again, if $j$ is even, then we admit
  each~$\gamma$ of this form
  into~$\Delta_{n+1}^{\text{\normalfont{AH}}}$, whereas if~$j$ is odd,
  we admit~$\gamma$ into~$\Delta_{n+1}^{\text{\normalfont{AH}}}$ if
  and only if $b^* = e_\eta^*$, where
  $\eta\in\Gamma_n^{\text{\normalfont{AH}}}\setminus\Gamma_p^{\text{\normalfont{AH}}}$
  has weight $1/m_{4\sigma(\xi)}$.  In both cases we define
  \[ \mbox{}\qquad c_\gamma^* = e_\xi^* + \frac{b^* - P_{(0,\,p]}^*b^*}{m_j},\quad
    \rank\gamma = n+1,\quad \weight\gamma = \frac{1}{m_j},\quad \age\gamma
    = 1+\age\xi. \] 
\end{romanenumerate}
It remains to extend the definition of~$\sigma$
to~$\Delta_{n+1}^{\text{\normalfont{AH}}}$.  Set $m =
\max\sigma[\Gamma_n^{\text{\normalfont{AH}}}]$. Then $m>n$, and we may
therefore define $\sigma(\gamma)$ for
$\gamma\in\Delta_{n+1}^{\text{\normalfont{AH}}}$ by assigning to it
any value in $\N\cap(m,\infty)$ that we wish, as long as we choose
distinct values for distinct elements
of~$\Delta_{n+1}^{\text{\normalfont{AH}}}$. This completes the
recursive construction and hence the definition of Argyros and
Haydon's Banach space~$X_{\text{\normalfont{AH}}}$.

\begin{remark}\label{basisofXAHisshrinking}
For later reference, we record the following two facts.
\begin{romanenumerate}
\item\label{basisofXAHisshrinking1} As noted in \cite[p.~17]{ah}, the
  basis constant~$M$ of
  $(d_\gamma^*)_{\gamma\in\Gamma^{\text{\normalfont{AH}}}}$ is at
  most~$2$.
\item\label{basisofXAHisshrinking2} The Schauder basis
  $(d_\gamma)_{\gamma\in\Gamma^{\text{\normalfont{AH}}}}$
  of~$X_{\text{\normalfont{AH}}}$ is shrinking, so that
  $(d_\gamma^*)_{\gamma\in\Gamma^{\text{\normalfont{AH}}}}$ forms a
  Schauder basis for the dual space~$X_{\text{\normalfont{AH}}}^*$,
  and therefore
  $X_{\text{\normalfont{AH}}}^*\cong\ell_1(\Gamma^{\text{AH}})$.
  Indeed, the proof of \cite[Proposition~5.12]{ah} shows that the
  FDD~$(M_n)_{n\in\N}$ for~$X_{\text{\normalfont{AH}}}$ is shrinking,
  and hence the conclusion follows from the elementary general fact
  that if a Schauder basis has a finite-dimensional blocking which is
  shrinking, then the basis is itself shrinking.
\end{romanenumerate}
\end{remark}

We are now ready to define the subspace~$Y$
of~$X_{\text{\normalfont{AH}}}$ that will have the properties stated
in Theorem~\ref{thesubspaceY}.
\begin{definition} 
We begin by recursively defining a sequence $(\Delta_n')_{n\ge2}$ of
non-empty, proper subsets of
$(\Delta_n^{\text{\normalfont{AH}}})_{n\ge2}$.

To start the recursion, we choose an element~$\beta_0$
in~$\Delta_2^{\text{\normalfont{AH}}}$ and set $\Delta_2' =
\{\beta_0\}$. This is certainly a non-empty subset
of~$\Delta_2^{\text{\normalfont{AH}}}$. It is also proper
because~$\Delta_2^{\text{\normalfont{AH}}}$ contains at least two
distinct elements, namely $(2,1/m_2,\pm e_1^*)$.

Now let $n\ge2$, and assume recursively that we have defined
non-empty, proper subsets $\Delta_2',\ldots,\Delta_n'$ of
$\Delta_2^{\text{\normalfont{AH}}},\ldots,\Delta_n^{\text{\normalfont{AH}}}$,
respectively. Set $\Gamma_n' = \bigcup_{j=2}^n\Delta_j'$, and define
\begin{equation}\label{defnDeltanplus1prime} 
\Delta_{n+1}' = \{\gamma\in\Delta_{n+1}^{\text{\normalfont{AH}}}
  : c_\gamma^*(\eta)\neq 0\ \text{for some}\ \eta\in\Gamma_n'\}. 
\end{equation}
Then $\Delta_{n+1}'$ is non-empty because it contains the element
$(n+1,1/m_2,e_{\beta_0}^*)$. To see that
$\Delta_{n+1}'$ is a proper subset of~$\Delta_{n+1}^{\text{\normalfont{AH}}}$, choose
$\zeta\in\Delta_2^{\text{\normalfont{AH}}}\setminus\Delta_2'$. Then we have
\[ \gamma :=
\Bigl(n+1,\frac{1}{m_2},e_{\zeta}^*\Bigr)\in\Delta_{n+1}^{\text{\normalfont{AH}}}, \]
and $c_\gamma^*(\eta) = e_{\zeta}^*(\eta)/m_2 = 0$ for each
$\eta\in\Gamma^{\text{\normalfont{AH}}}\setminus\{\zeta\}\supseteq\Gamma_n'$,
so that $\gamma\notin\Delta_{n+1}'$.  This completes the recursion.

Set $\Gamma' = \bigcup_{n=2}^\infty\Delta_n'$, and define~$Y$ to
be the closed subspace of~$X_{\text{\normalfont{AH}}}$ spanned by the
basic sequence~$(d_\gamma)_{\gamma\in\Gamma'}$.
\end{definition}

The definition of~$Y$ shows immediately that~$Y$ is
infinite-dimensional and has
infinite codimension in~$X_{\text{\normalfont{AH}}}$ (because the sets $\Gamma'$
and $\Gamma^{\text{\normalfont{AH}}}\setminus\Gamma'$ are infinite), and
that $(d_\gamma)_{\gamma\in\Gamma'}$ is a Schauder basis for~$Y$.
This basis is shrinking because it is a subsequence of the shrinking
basis~$(d_\gamma)_{\gamma\in\Gamma^{\text{\normalfont{AH}}}}$
for~$X_{\text{\normalfont{AH}}}$. Thus clauses~\romanref{prop4ofY}
and~\romanref{prop1ofY} of Theorem~\ref{thesubspaceY} are
satisfied. To establish the other two clauses, we require some further
observations concerning~$\Gamma'$ and~$Y$.

\begin{lemma}\label{basiclemmaGammaprime} Let
  $\gamma\in\Gamma^{\text{\normalfont{AH}}}$.  Then: 
\begin{romanenumerate}
\item\label{basiclemmaGammaprime1}
  $\gamma\in\Gamma'\setminus\{\beta_0\}$ if and only if
  $c_\gamma^*|_{\Gamma'}\ne 0;$
\item\label{basiclemmaGammaprime2} $\gamma\in\Gamma'$ if and only if
  $d_\gamma^*|_{\Gamma'}\ne 0$.
\end{romanenumerate}
\end{lemma}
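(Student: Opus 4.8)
The plan is to prove both biconditionals by induction on the level~$n$, using the recursive definition~\eqref{defnDeltanplus1prime} of~$\Delta_{n+1}'$ together with the relation $d_\gamma^* = e_\gamma^* - c_\gamma^*$. For~\romanref{basiclemmaGammaprime1}, the forward direction should be essentially immediate: if $\gamma\in\Gamma'\setminus\{\beta_0\}$, then $\gamma\in\Delta_{n+1}'$ for some $n\ge 2$, so by~\eqref{defnDeltanplus1prime} there is $\eta\in\Gamma_n'\subseteq\Gamma'$ with $c_\gamma^*(\eta)\ne 0$, whence $c_\gamma^*|_{\Gamma'}\ne 0$. Conversely, suppose $c_\gamma^*|_{\Gamma'}\ne 0$. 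Then $\gamma\ne 1$ (as $c_1^* = 0$), so $\gamma\in\Delta_{n+1}^{\text{AH}}$ for some $n\ge 1$, and $c_\gamma^*$ is supported on~$\Gamma_n^{\text{AH}}$; pick $\eta\in\Gamma_n^{\text{AH}}$ with $c_\gamma^*(\eta)\ne 0$ and $\eta\in\Gamma'$. Since $\eta\in\Gamma_n^{\text{AH}}\cap\Gamma' = \Gamma_n'$, the defining condition~\eqref{defnDeltanplus1prime} puts $\gamma\in\Delta_{n+1}'\subseteq\Gamma'$, and $\gamma\ne\beta_0$ because $\beta_0\in\Delta_2^{\text{AH}}$ while $\gamma\in\Delta_{n+1}^{\text{AH}}$ with $n\ge 1$ so $n+1\ge 2$; one must check the edge case $n=1$ is vacuous since $\Gamma_1' = \emptyset$. (A small point: one should observe that $\beta_0$ itself satisfies $c_{\beta_0}^*|_{\Gamma'}$ could be zero or not — in fact $\beta_0\in\Delta_2^{\text{AH}}$ means $c_{\beta_0}^* = \pm e_1^*/m_2$ or similar, supported on $\Gamma_1^{\text{AH}} = \{1\}\not\subseteq\Gamma'$, so $c_{\beta_0}^*|_{\Gamma'} = 0$, which is exactly why $\beta_0$ must be excluded on the left-hand side.)

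For~\romanref{basiclemmaGammaprime2}, write $d_\gamma^* = e_\gamma^* - c_\gamma^*$ and examine $d_\gamma^*|_{\Gamma'} = e_\gamma^*|_{\Gamma'} - c_\gamma^*|_{\Gamma'}$. If $\gamma\in\Gamma'$, then $e_\gamma^*|_{\Gamma'} = e_\gamma^*\ne 0$, and I claim $d_\gamma^*|_{\Gamma'}\ne 0$ regardless: indeed $d_\gamma^*(\gamma) = e_\gamma^*(\gamma) - c_\gamma^*(\gamma) = 1 - c_\gamma^*(\gamma)$, and since $c_\gamma^*\in\spa\{e_\eta^* : \eta\in\Gamma_n^{\text{AH}}\}$ with $\gamma\in\Delta_{n+1}^{\text{AH}}$ (or $\gamma = 1$ with $c_1^* = 0$), we have $c_\gamma^*(\gamma) = 0$, so $d_\gamma^*(\gamma) = 1\ne 0$ and hence $d_\gamma^*|_{\Gamma'}\ne 0$. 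Conversely, suppose $\gamma\notin\Gamma'$; I must show $d_\gamma^*|_{\Gamma'} = 0$. Here $e_\gamma^*|_{\Gamma'} = 0$ since $\gamma\notin\Gamma'$, so it suffices to show $c_\gamma^*|_{\Gamma'} = 0$. By part~\romanref{basiclemmaGammaprime1} (contrapositive), $c_\gamma^*|_{\Gamma'}\ne 0$ would force $\gamma\in\Gamma'\setminus\{\beta_0\}$, contradicting $\gamma\notin\Gamma'$; so $c_\gamma^*|_{\Gamma'} = 0$, and therefore $d_\gamma^*|_{\Gamma'} = 0$ as desired.

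The main subtlety — really the only place one has to be careful — is the role of $\beta_0$ and the interaction between parts~\romanref{basiclemmaGammaprime1} and~\romanref{basiclemmaGammaprime2}: part~\romanref{basiclemmaGammaprime2} is clean precisely because it is phrased in terms of $d_\gamma^*$ rather than $c_\gamma^*$, so the "extra" element $\beta_0$ is detected via $e_{\beta_0}^*|_{\Gamma'} = e_{\beta_0}^*\ne 0$ even though $c_{\beta_0}^*|_{\Gamma'} = 0$. Concretely, I would prove~\romanref{basiclemmaGammaprime1} first (it needs only~\eqref{defnDeltanplus1prime} and the support properties of $c_\gamma^*$, with no induction actually required once one unwinds the definitions), then derive~\romanref{basiclemmaGammaprime2} from it together with the identity $d_\gamma^*(\gamma) = 1$. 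I would also flag at the outset the trivial facts $c_\gamma^*(\gamma) = 0$ for all $\gamma$ and $\supp c_\gamma^*\subseteq\Gamma_n^{\text{AH}}$ when $\gamma\in\Delta_{n+1}^{\text{AH}}$, since both are used repeatedly. No heavy machinery from the Argyros–Haydon construction is needed — only the bookkeeping of which coordinates $c_\gamma^*$ can be nonzero on, and the recursive membership test~\eqref{defnDeltanplus1prime}.
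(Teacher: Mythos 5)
Your proposal is correct and follows essentially the same route as the paper: the forward direction of \romanref{basiclemmaGammaprime1} directly from~\eqref{defnDeltanplus1prime}, the converse via $\supp c_\gamma^*\subseteq\Gamma_{n}^{\text{AH}}$ forcing $\eta\in\Gamma_n'$, the exclusion of~$\beta_0$ via $\supp c_{\beta_0}^*\subseteq\{1\}$ (disjoint from~$\Gamma'$), and \romanref{basiclemmaGammaprime2} from $d_\gamma^*(\gamma)=1$ together with part~\romanref{basiclemmaGammaprime1}. The only rough spot is your first stated reason that $\gamma\ne\beta_0$ (``$n+1\ge 2$'' alone proves nothing), but your edge-case remark that $\Gamma_1'=\emptyset$ forces $n\ge 2$, together with the parenthetical observation that $c_{\beta_0}^*|_{\Gamma'}=0$, supplies the correct justification, which is exactly the one the paper uses.
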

\begin{proof} Set $n = (\rank\gamma)-1\in\N_0$, so that 
$\gamma\in\Delta^{\text{\normalfont{AH}}}_{n+1}$.

\romanref{basiclemmaGammaprime1}. This is (almost) immediate from the
definition of~$\Gamma'$. Indeed, if
$\gamma\in\Gamma'\setminus\{\beta_0\}$, then we have $n\ge2$ and
$c_\gamma^*(\eta)\ne 0$ for some $\eta\in\Gamma_n'$
by~\eqref{defnDeltanplus1prime}, so that $c_\gamma^*|_{\Gamma'}\ne 0$.

Conversely, suppose that $c_\gamma^*(\eta)\ne 0$ for some
$\eta\in\Gamma'$. Then $\rank\eta\le n$ because \mbox{$\supp
  c_\gamma^*\subseteq\Gamma_n$}.  Hence $\eta\in\Gamma'_n$, and
therefore $\gamma\in\Delta_{n+1}'$ by~\eqref{defnDeltanplus1prime}. We
cannot have $\gamma =\beta_0$ because $\rank\beta_0 = 2$, so that
$\supp c_{\beta_0}^*\subseteq\Gamma_1 = \{1\}$, which is disjoint
from~$\Gamma'$.

\romanref{basiclemmaGammaprime2}. Recall that $d_\gamma^* = e_\gamma^*
- c_\gamma^*$.

Suppose first that $\gamma\in\Gamma'$. Then
$d_\gamma^*(\gamma) = 1$ because $c_\gamma^*(\gamma) = 0$, and so
$d^*_\gamma|_{\Gamma'}\ne 0$.

Conversely, suppose that $d_\gamma^*(\eta)\ne 0$ for some
$\eta\in\Gamma'$. If $\gamma = \eta$, then
$\gamma\in\Gamma'$. Otherwise $e^*_\gamma(\eta) = 0$, so that
$c_\gamma^*(\eta)\ne0$, and the conclusion follows
from~\romanref{basiclemmaGammaprime1}.
\end{proof}

\begin{lemma}\label{basiclemmaGammaprime3} Let $\gamma\in\Gamma'.$
  Then $d_\gamma|_{\Gamma^{\text{\normalfont{AH}}}\setminus\Gamma'}
  =0$.
\end{lemma}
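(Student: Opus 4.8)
The plan is to translate the statement into one about Schauder basis expansions in $\ell_1(\Gamma^{\text{AH}})$. Recall that $(d_\gamma)_{\gamma\in\Gamma^{\text{AH}}}$ consists of the coordinate functionals associated with the Schauder basis $(d_\zeta^*)_{\zeta\in\Gamma^{\text{AH}}}$ of $\ell_1(\Gamma^{\text{AH}})$. Hence, for each $\eta\in\Gamma^{\text{AH}}$, the scalar $d_\gamma(\eta) = \langle d_\gamma, e_\eta^*\rangle$ is precisely the coefficient of $d_\gamma^*$ in the expansion of $e_\eta^*$ with respect to this basis; moreover, by~\eqref{imagepfPnstar}, that expansion is a \emph{finite} linear combination of the functionals $d_\zeta^*$ with $\zeta\in\Gamma_m^{\text{AH}}$, where $m=\rank\eta$. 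Consequently, the lemma is equivalent to the assertion that $e_\eta^*\in\spa\{d_\zeta^* : \zeta\in\Gamma^{\text{AH}}\setminus\Gamma'\}$ for every $\eta\in\Gamma^{\text{AH}}\setminus\Gamma'$, because then the $d_\gamma^*$-coefficient of $e_\eta^*$ vanishes for all $\gamma\in\Gamma'$.

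I would prove this reformulated assertion by induction on $m:=\rank\eta$. The case $m=1$ is immediate: then $\eta=1\notin\Gamma'$ and $e_1^*=d_1^*$. For the inductive step, let $m\ge2$ and $\eta\in\Delta_m^{\text{AH}}\setminus\Gamma'$, and write $e_\eta^*=d_\eta^*+c_\eta^*$. By the construction (each $c_\gamma^*$ with $\gamma\in\Delta_{n+1}^{\text{AH}}$ lies in $\spa\{e_\xi^* : \xi\in\Gamma_n^{\text{AH}}\}$), we have $c_\eta^*=\sum_{\xi\in\Gamma_{m-1}^{\text{AH}}}c_\eta^*(\xi)\,e_\xi^*$. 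The crucial point is that, since $\eta\notin\Gamma'$, Lemma~\ref{basiclemmaGammaprime}\romanref{basiclemmaGammaprime1} gives $c_\eta^*|_{\Gamma'}=0$: that lemma shows that $c_\eta^*|_{\Gamma'}\ne0$ would force $\eta\in\Gamma'\setminus\{\beta_0\}\subseteq\Gamma'$. Hence only the indices $\xi\in\Gamma_{m-1}^{\text{AH}}\setminus\Gamma'$ actually appear in the sum, and each such $\xi$ has $\rank\xi\le m-1$, so the induction hypothesis yields $e_\xi^*\in\spa\{d_\zeta^*:\zeta\notin\Gamma'\}$. Since $d_\eta^*$ itself also lies in this span (as $\eta\notin\Gamma'$), we conclude $e_\eta^*\in\spa\{d_\zeta^*:\zeta\notin\Gamma'\}$, completing the induction. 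Translating back via the first paragraph then gives $d_\gamma(\eta)=0$ whenever $\gamma\in\Gamma'$ and $\eta\in\Gamma^{\text{AH}}\setminus\Gamma'$, which is the desired conclusion.

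I do not expect a serious obstacle here; the two points requiring a little care are (a)~making precise the identification of $d_\gamma(\eta)$ with an expansion coefficient and using~\eqref{imagepfPnstar} to see that the expansion is finite (so that the finite spans above can be used literally, with no closures needed), and (b)~recognising that Lemma~\ref{basiclemmaGammaprime}\romanref{basiclemmaGammaprime1} supplies exactly the vanishing statement $c_\eta^*|_{\Gamma'}=0$ that drives the induction on the rank.
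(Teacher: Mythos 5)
Your argument is correct and is essentially the paper's proof in dual form: the paper fixes $\gamma\in\Gamma'$ and shows $d_\gamma(\eta)=0$ by induction on the rank of $\eta$, using exactly the same ingredients you use --- the decomposition $e_\eta^* = d_\eta^* + c_\eta^*$ and Lemma~\ref{basiclemmaGammaprime}\romanref{basiclemmaGammaprime1} to obtain $c_\eta^*|_{\Gamma'}=0$ --- whereas you package the same induction as the statement $e_\eta^*\in\spa\{d_\zeta^* : \zeta\in\Gamma^{\text{AH}}\setminus\Gamma'\}$ and then read off the conclusion by biorthogonality. The only cosmetic difference is that your base case ($e_1^*=d_1^*$) avoids the support fact $\supp d_\gamma\subseteq\{\gamma\}\cup(\Gamma^{\text{AH}}\setminus\Gamma^{\text{AH}}_{\rank\gamma})$ invoked in the paper's base case.
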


\begin{proof} 
We shall prove the result
inductively by showing that $d_\gamma(\eta) = 0$ for each $m\in\N$ and
$\eta\in\Delta_m^{\text{\normalfont{AH}}}\setminus\Delta_m'$.
To begin the induction, we observe that this is true 
whenever $m\le\rank\gamma$
 because
$\supp
d_\gamma\subseteq\{\gamma\}\cup(\Gamma^{\text{\normalfont{AH}}}\setminus
\Gamma_{\rank\gamma}^{\text{\normalfont{AH}}})$ (see \cite[p.~12]{ah}).

Now let $m\ge\rank\gamma$ and
$\eta\in\Delta_{m+1}^{\text{\normalfont{AH}}}\setminus\Delta_{m+1}'$,
and assume inductively that $d_\gamma(\xi) = 0$ for each
$\xi\in\Gamma_m^{\text{\normalfont{AH}}}\setminus\Gamma_m'$.  By
Lemma~\ref{basiclemmaGammaprime}\romanref{basiclemmaGammaprime1}, we
have $c_\eta^*|_{\Gamma'} = 0$ and thus
\[ c_\eta^* = \sum_{\xi\in\Gamma_m^{\text{\normalfont{AH}}}\setminus\Gamma_m'} 
c_\eta^*(\xi)e_\xi^*. \] This implies that
\[ d_\gamma(\eta) = \langle d_\gamma, d_\eta^*+c_\eta^*\rangle = 0 +  
\sum_{\xi\in\Gamma_m^{\text{\normalfont{AH}}}\setminus\Gamma_m'}
c_\eta^*(\xi)d_\gamma(\xi) = 0 \] by the induction hypothesis, and
hence the induction continues.
\end{proof}

To state the following two results concisely, we set $\Gamma_0'
= \Gamma_1' = \emptyset$.

\begin{corollary}\label{corBasicpropertiesofY}
Let $y\in Y$. Then:
\begin{romanenumerate}
\item\label{corBasicpropertiesofY1} $\supp y\subseteq\Gamma'$.
\item\label{corBasicpropertiesofY2} Suppose that $\ran y\subseteq
  (p,q]$ for some non-negative integers $p<q$. Then $y =
  i_q(y|_{\Gamma_q^{\text{\normalfont{AH}}}})$ and $\supp
  (y|_{\Gamma_q^{\text{\normalfont{AH}}}})\subseteq\Gamma_q'\setminus\Gamma_p'$.
\end{romanenumerate}
\end{corollary}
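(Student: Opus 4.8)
The plan is to deduce both clauses of Corollary~\ref{corBasicpropertiesofY} from the two preceding lemmas by a standard approximation-plus-linearity argument.

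For clause~\romanref{corBasicpropertiesofY1}, first note that the assertion $\supp y\subseteq\Gamma'$ is equivalent to saying that $\langle d_\gamma, y\rangle = 0$ for every $\gamma\in\Gamma^{\text{AH}}\setminus\Gamma'$ — indeed, $y(\gamma) = \langle e_\gamma^*, y\rangle$, and since $y$ lies in the closed span of $(d_\eta)_{\eta\in\Gamma'}$ while $(d_\eta^*)_{\eta\in\Gamma^{\text{AH}}}$ is the dual basic sequence (using Remark~\ref{basisofXAHisshrinking}\romanref{basisofXAHisshrinking2}, the basis is shrinking, so the coordinate functionals behave well), one expresses $e_\gamma^* = d_\gamma^* + c_\gamma^*$ and applies Lemma~\ref{basiclemmaGammaprime}\romanref{basiclemmaGammaprime2} together with Lemma~\ref{basiclemmaGammaprime}\romanref{basiclemmaGammaprime1} to see that both $d_\gamma^*|_{\Gamma'}$ and $c_\gamma^*|_{\Gamma'}$ vanish for $\gamma\notin\Gamma'$ (the $\beta_0$ case being harmless since $\beta_0\in\Gamma'$). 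Alternatively — and more in the spirit of Lemma~\ref{basiclemmaGammaprime3} — one writes $y$ as the norm-limit of finite linear combinations $\sum_{\gamma\in F} a_\gamma d_\gamma$ with $F\subseteq\Gamma'$ finite, notes that each such combination is supported in $\Gamma'$ by Lemma~\ref{basiclemmaGammaprime3} (which gives $d_\gamma(\eta)=0$ for $\gamma\in\Gamma'$, $\eta\notin\Gamma'$), and passes to the limit coordinatewise, using that evaluation at a fixed $\eta\in\Gamma^{\text{AH}}$ is norm-continuous on $X_{\text{AH}}\subseteq\ell_\infty(\Gamma^{\text{AH}})$. Either way one concludes $y(\eta)=0$ for all $\eta\in\Gamma^{\text{AH}}\setminus\Gamma'$, which is \romanref{corBasicpropertiesofY1}.

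For clause~\romanref{corBasicpropertiesofY2}, the identity $y = i_q(y|_{\Gamma_q^{\text{AH}}})$ is immediate from~\eqref{AHp12iq} applied to $y$ (any $y\in Y$ with $\ran y\subseteq(p,q]$ lies in $\spa\{d_\gamma:\gamma\in\Gamma\}$ after approximation — more precisely, since $\ran y\subseteq(p,q]$ forces $y\in\spa\{d_\gamma:\gamma\in\bigcup_{i=p+1}^q\Delta_i^{\text{AH}}\}$, a finite-dimensional space, so $y$ genuinely is a finite combination and~\eqref{AHp12iq} applies verbatim). Then $\supp(y|_{\Gamma_q^{\text{AH}}})\subseteq\Gamma_q^{\text{AH}}\setminus\Gamma_p^{\text{AH}}$ by~\eqref{AHp12iq}, while $\supp(y|_{\Gamma_q^{\text{AH}}})\subseteq\supp y\subseteq\Gamma'$ by part~\romanref{corBasicpropertiesofY1}; intersecting, $\supp(y|_{\Gamma_q^{\text{AH}}})\subseteq(\Gamma_q^{\text{AH}}\setminus\Gamma_p^{\text{AH}})\cap\Gamma' = \Gamma_q'\setminus\Gamma_p'$, where the last equality holds because $\Gamma' = \bigcup_{n\ge2}\Delta_n'$ with $\Delta_n'\subseteq\Delta_n^{\text{AH}}$, so $\Gamma'\cap\Gamma_q^{\text{AH}} = \Gamma_q'$ and likewise for $p$ (and the convention $\Gamma_0'=\Gamma_1'=\emptyset$ matches $\Gamma_0^{\text{AH}}=\emptyset$, $\Gamma_1^{\text{AH}}=\{1\}$, $1\notin\Gamma'$).

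The only mild obstacle is the passage from finite combinations to a general $y\in Y$ in clause~\romanref{corBasicpropertiesofY1}: one must justify that coordinatewise evaluation commutes with the norm-limit, which is simply the continuity of the restriction maps $\ell_\infty(\Gamma^{\text{AH}})\to\K$, $x\mapsto x(\eta)$. Everything else is bookkeeping with the definitions of $\Gamma'$, $\Gamma_n'$ and the conventions, combined with Lemmas~\ref{basiclemmaGammaprime} and~\ref{basiclemmaGammaprime3}.
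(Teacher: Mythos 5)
Your second route for clause~\romanref{corBasicpropertiesofY1}, together with your argument for clause~\romanref{corBasicpropertiesofY2}, is exactly the paper's proof: by the definition of~$Y$ and the continuity of the coordinate evaluations on $\ell_\infty(\Gamma^{\text{AH}})$, clause~\romanref{corBasicpropertiesofY1} reduces to $\supp d_\gamma\subseteq\Gamma'$ for $\gamma\in\Gamma'$, which is Lemma~\ref{basiclemmaGammaprime3}, and clause~\romanref{corBasicpropertiesofY2} is \eqref{AHp12iq} combined with clause~\romanref{corBasicpropertiesofY1} and the bookkeeping $(\Gamma_q^{\text{AH}}\setminus\Gamma_p^{\text{AH}})\cap\Gamma'=\Gamma_q'\setminus\Gamma_p'$. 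So the proposal as a whole is correct and follows the paper.

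The first route you offer for clause~\romanref{corBasicpropertiesofY1}, however, does not work as stated and should not be presented as an independent alternative. For $\gamma\notin\Gamma'$, biorthogonality does give $\langle d_\gamma^*,y\rangle=0$ (Lemma~\ref{basiclemmaGammaprime}\romanref{basiclemmaGammaprime2} is about the values $d_\gamma^*(\eta)$ on the index set and is not what is needed here), but the problematic term is $\langle c_\gamma^*,y\rangle=\sum_\xi c_\gamma^*(\xi)\,y(\xi)$: Lemma~\ref{basiclemmaGammaprime}\romanref{basiclemmaGammaprime1} only tells you that $c_\gamma^*(\xi)=0$ for $\xi\in\Gamma'$, so the surviving terms involve $y(\xi)$ with $\xi\in\Gamma^{\text{AH}}\setminus\Gamma'$, and claiming these vanish is precisely the statement $\supp y\subseteq\Gamma'$ that you are trying to prove. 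Equivalently, $d_\eta$ for $\eta\in\Gamma'$ is not supported on $\{\eta\}$, so pairing it with $e_\gamma^*=d_\gamma^*+c_\gamma^*$ is not settled by knowing that these functionals vanish at coordinates in~$\Gamma'$. This circularity is exactly what the rank induction in Lemma~\ref{basiclemmaGammaprime3} is designed to break, which is why the paper (and your second route) must pass through that lemma rather than through Lemma~\ref{basiclemmaGammaprime} alone.
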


\begin{proof} \romanref{corBasicpropertiesofY1}.  
By the definition of~$Y$, it suffices to show that $\supp
d_\gamma\subseteq\Gamma'$ for each $\gamma\in\Gamma'$, that is,
$d_\gamma(\eta) = 0$ for each
$\eta\in\Gamma^{\text{\normalfont{AH}}}\setminus\Gamma'$, which is
true by
Lemma~\ref{basiclemmaGammaprime3}.

\romanref{corBasicpropertiesofY2}. This follows by
combining~\romanref{corBasicpropertiesofY1} with~\eqref{AHp12iq}.
\end{proof}

\begin{corollary}\label{coriqmapsGamma}
Let $p<q$ be natural numbers. Then
\[ i_q[\ell_\infty(\Gamma_q'\setminus\Gamma_p')] = \spa\{d_\gamma :
\gamma\in\Gamma_q'\setminus\Gamma_p'\}. \]
\end{corollary}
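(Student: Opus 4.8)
The plan is to prove the inclusion ``$\supseteq$'' directly and then to obtain the reverse inclusion, and hence equality, from a dimension count, so that ``$\subseteq$'' never has to be argued separately.

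First I would record the elementary identity $\Gamma_j' = \Gamma'\cap\Gamma_j^{\text{\normalfont{AH}}}$ for every~$j$; this is immediate from the inclusions $\Delta_k'\subseteq\Delta_k^{\text{\normalfont{AH}}}$ together with the pairwise disjointness of the sets $\Delta_k^{\text{\normalfont{AH}}}$ (and it holds for $j\le 1$ by the convention $\Gamma_0'=\Gamma_1'=\emptyset$). Consequently $\Gamma_q'\setminus\Gamma_p' = \Gamma'\cap(\Gamma_q^{\text{\normalfont{AH}}}\setminus\Gamma_p^{\text{\normalfont{AH}}}) = \bigcup_{j=p+1}^q\Delta_j'$, a \emph{finite} subset of $\Gamma_q^{\text{\normalfont{AH}}}$. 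To prove ``$\supseteq$'' I would take an arbitrary $\gamma\in\Gamma_q'\setminus\Gamma_p'$, observe that $p<\rank\gamma\le q$ and hence that $\ran d_\gamma = [\rank\gamma,\rank\gamma]\subseteq(p,q]$, and then apply the Argyros--Haydon observation~\eqref{AHp12iq} to $x=d_\gamma$: this gives $d_\gamma = i_q(u)$, where $u := d_\gamma|_{\Gamma_q^{\text{\normalfont{AH}}}}$ satisfies $\supp u\subseteq\Gamma_q^{\text{\normalfont{AH}}}\setminus\Gamma_p^{\text{\normalfont{AH}}}$. Since $\gamma\in\Gamma'$, Lemma~\ref{basiclemmaGammaprime3} shows that $\supp d_\gamma\subseteq\Gamma'$, so in fact $\supp u\subseteq\Gamma'\cap(\Gamma_q^{\text{\normalfont{AH}}}\setminus\Gamma_p^{\text{\normalfont{AH}}}) = \Gamma_q'\setminus\Gamma_p'$; that is, $u\in\ell_\infty(\Gamma_q'\setminus\Gamma_p')$ and therefore $d_\gamma = i_q(u)\in i_q[\ell_\infty(\Gamma_q'\setminus\Gamma_p')]$. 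As the right-hand side is a linear subspace, it contains $\spa\{d_\gamma : \gamma\in\Gamma_q'\setminus\Gamma_p'\}$.

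Finally I would upgrade this inclusion to an equality by comparing dimensions. Writing $N = |\Gamma_q'\setminus\Gamma_p'|<\infty$, the space $\ell_\infty(\Gamma_q'\setminus\Gamma_p')$ is $N$-dimensional, and $i_q$ is injective (by the left-hand estimate in~\eqref{theembeddingin}, or simply because $i_q$ is an extension operator), so $i_q[\ell_\infty(\Gamma_q'\setminus\Gamma_p')]$ is $N$-dimensional as well; on the other hand $\spa\{d_\gamma : \gamma\in\Gamma_q'\setminus\Gamma_p'\}$ is also $N$-dimensional, being the linear span of $N$ distinct (hence linearly independent) members of the Schauder basis $(d_\gamma)_{\gamma\in\Gamma^{\text{\normalfont{AH}}}}$. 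An $N$-dimensional subspace contained in an $N$-dimensional space coincides with it, and the corollary follows. There is no genuine obstacle here: the one substantive ingredient is Lemma~\ref{basiclemmaGammaprime3}, which is already available, and the only point requiring a moment's thought is the identification $\ran d_\gamma\subseteq(p,q]$ — which is exactly what the description $\Gamma_q'\setminus\Gamma_p' = \bigcup_{j=p+1}^q\Delta_j'$ is for.
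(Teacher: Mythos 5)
Your proof is correct and follows essentially the same route as the paper: show that each $d_\gamma$ with $\gamma\in\Gamma_q'\setminus\Gamma_p'$ lies in $i_q[\ell_\infty(\Gamma_q'\setminus\Gamma_p')]$ and then conclude by a finite-dimension count. The only cosmetic differences are that you inline the special case of Corollary~\ref{corBasicpropertiesofY}\romanref{corBasicpropertiesofY2} for the basis vectors $d_\gamma$ directly from~\eqref{AHp12iq} and Lemma~\ref{basiclemmaGammaprime3}, and you invoke the injectivity of~$i_q$ where the paper simply bounds $\dim i_q[\ell_\infty(\Gamma_q'\setminus\Gamma_p')]$ above by $|\Gamma_q'\setminus\Gamma_p'|$.
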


\begin{proof} 
Set $F = \spa\{d_\gamma : \gamma\in\Gamma_q'\setminus\Gamma_p'\}$.
Corollary~\ref{corBasicpropertiesofY}\romanref{corBasicpropertiesofY2}
implies that $F\subseteq
i_q[\ell_\infty(\Gamma_q'\setminus\Gamma_p')]$, so that
\[ |\Gamma_q'\setminus\Gamma_p'| = \dim F\le \dim 
i_q[\ell_\infty(\Gamma_q'\setminus\Gamma_p')]\le\dim
\ell_\infty(\Gamma_q'\setminus\Gamma_p') =
|\Gamma_q'\setminus\Gamma_p'|<\infty. \] Hence
$i_q[\ell_\infty(\Gamma_q'\setminus\Gamma_p')]$ has the same finite dimension
as its subspace~$F$, so they are equal.
\end{proof}

\begin{proof}[Proof of 
Theorem~{\normalfont{\ref{thesubspaceY}\romanref{prop2ofY}}}] Since
  $Y$ has a shrinking basis, its dual is separable, so by a result of
  Lewis and Stegall (see \cite[the second corollary of
    Theorem~2]{ls}), it suffices to show that~$Y$ is a
  $\mathscr{L}_\infty$-space. This follows from an argument similar to
  \cite[Proposition~3.2]{ah}. Indeed, $(\spa\{d_\gamma :
  \gamma\in\Gamma_q'\})_{q=2}^\infty$ is an increasing sequence of
  subspaces of~$Y$ whose union is dense in~$Y$, and these subspaces
  are uniformly isomorphic to the finite-dimensional
  $\ell_\infty$-spaces of the corresponding dimensions
  by~\eqref{theembeddingin} and Corollary~\ref{coriqmapsGamma}
  (applied with $p=1$).
\end{proof}
 
Clause~\romanref{prop3ofY} of Theorem~\ref{thesubspaceY} is, not
surprisingly, significantly harder to prove than
clauses \romanref{prop4ofY}--\romanref{prop2ofY}.  We shall follow
closely Argyros and Haydon's proof of \cite[Theorem~7.4]{ah}, which
shows that all bounded operators on~$X_{\text{\normalfont{AH}}}$ have the form
scalar-plus-compact. `Rapidly increasing sequences' play a central
role in this proof; their definition is as follows.

\begin{definition}\label{defnRIS}
A \emph{rapidly increasing sequence} (or \emph{RIS} for short)
in~$X_{\text{AH}}$ is a block sequence $(x_i)_{i\in\I}$ indexed by a
non-empty (finite or infinite) interval~$\I$ of~$\N$ such that there
are a constant $C>0$ and a strictly increasing sequence
$(j_i)_{i\in\I}$ of natural numbers satisfying
\begin{romanenumerate}
\item\label{defnRIS1} $\| x_i\|_\infty\le C$ for each $i\in\I$;
\item\label{defnRIS2} $\max\ran x_{i-1} < j_i$ for each
  $i\in\I\setminus\{\min\I\}$;
\item\label{defnRIS3} $|x_i(\gamma)|\le C/m_k$ for each $i\in\I$ and
  each $\gamma\in\Gamma^{\text{\normalfont{AH}}}$ with $\weight\gamma
  = 1/m_k$ for some $k\in\N\cap[1,j_i)$.
\end{romanenumerate}
If we need to specify the constant~$C$ in this definition, we refer
to a \emph{$C$-RIS.}

We say that a RIS $(x_i)_{i\in\I}$ is \emph{semi-normalized} if
$\inf_{i\in\I}\|x_i\|_\infty>0$. (Note that
condition~\romanref{defnRIS1}, above, ensures that
$\sup_{i\in\I}\|x_i\|_\infty<\infty$.)

Let $W$ be a subset of~$X_{\text{AH}}$. By a \emph{RIS in~$W$,} we
mean a sequence $(x_i)_{i\in\I}$ that is a RIS in the above sense and
satisfies $x_i\in W$ for each $i\in\I$.
\end{definition}

Our first aim is to establish the following variant of
\cite[Proposition~5.11]{ah} for bounded operators defined on the subspace~$Y$
of~$X_{\normalfont{\text{AH}}}$.
\begin{proposition}\label{AHprop511}
Let~$T$ be a bounded operator from~$Y$ into a Banach space. Then the
following three conditions are equivalent:
\begin{alphenumerate}
\item\label{AHprop511a} every RIS $(x_i)_{i\in\N}$ in~$Y$ has a
  subsequence $(x_i')_{i\in\N}$ such that $\|Tx_i'\|\to0$ as
  $i\to\infty;$
\item\label{AHprop511c} every 
  bounded block sequence  $(x_i)_{i\in\N}$ in~$Y$ has a
  subsequence $(x_i')_{i\in\N}$ such that\\ $\|Tx_i'\|\to0$ as
  $i\to\infty;$
\item\label{AHprop511e} the operator~$T$ is compact. 
\end{alphenumerate}
\end{proposition}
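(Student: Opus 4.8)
The plan is to establish the cycle of implications \alphref{AHprop511e}$\Rightarrow$\alphref{AHprop511c}$\Rightarrow$\alphref{AHprop511a}$\Rightarrow$\alphref{AHprop511e}. The first implication is immediate: a compact operator maps any bounded sequence onto a relatively norm-compact set, and a bounded block sequence in~$Y$, being weakly null (because the basis of~$Y$ is shrinking, by Theorem~\ref{thesubspaceY}\romanref{prop4ofY}), can have no norm-convergent subsequence with nonzero limit; hence $\|Tx_i'\|\to0$ along a suitable subsequence. The implication \alphref{AHprop511c}$\Rightarrow$\alphref{AHprop511a} is trivial, since every RIS is by definition a bounded block sequence (condition~\romanref{defnRIS1}).

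The substance is therefore \alphref{AHprop511a}$\Rightarrow$\alphref{AHprop511e}. First I would invoke Lemma~\ref{FDDprojapprox}\romanref{FDDprojapprox2}: since the FDD $(M_n\cap Y)_{n\in\N}$ for~$Y$ is shrinking (it is the blocking of a shrinking basis), it suffices to show that $\|Tx_j\|\to0$ for \emph{every} bounded block sequence $(x_j)_{j\in\N}$ in~$Y$ — the equivalence with compactness is condition~\alphref{FDDprojapprox2c}. So the real task is to upgrade hypothesis~\alphref{AHprop511a} — which only gives a null \emph{subsequence} along \emph{RIS}es — to the assertion that $\|Tx_j\|\to0$ along every bounded block sequence. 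The key tool is the standard Argyros--Haydon fact (the analogue of \cite[Lemma~5.9 or~5.10]{ah}, valid inside any Bourgain--Delbaen space, and applicable to~$Y$ because Corollary~\ref{corBasicpropertiesofY} shows that block sequences in~$Y$ have all their data supported in~$\Gamma'$) that any normalized block sequence has a subsequence whose closed span contains, arbitrarily close, the terms of a semi-normalized RIS; more usefully, one argues contrapositively. Suppose $\|Tx_j\|\not\to0$ along some bounded block sequence; passing to a subsequence, $\|Tx_j\|\ge\delta>0$ for all~$j$ and $(x_j)$ is semi-normalized. By a diagonal/thinning argument using condition~\romanref{defnRIS3} — one selects a sparse subsequence $(x_{j_i})$ so that the weights appearing with large coefficient in~$x_{j_i}$ are controlled by~$j_{i}$-type bounds, exactly as in the construction of RISes in~\cite{ah} — one extracts from $(x_j)$ a subsequence that \emph{is} a RIS in~$Y$. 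Then hypothesis~\alphref{AHprop511a} forces a further subsequence with $\|Tx_i'\|\to0$, contradicting $\|Tx_{j}\|\ge\delta$.

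The main obstacle I anticipate is the extraction of an honest RIS from an arbitrary bounded block sequence in~$Y$: condition~\romanref{defnRIS3} is a genuine constraint — it demands that the coordinates of~$x_i$ on generators of weight~$1/m_k$ be small for \emph{all} $k<j_i$, not just large~$k$ — and for a general block sequence this need not hold without first splitting each~$x_j$ and discarding a "heavy'' part. The correct fix is the familiar one from~\cite{ah}: either (i) the block sequence already has the RIS property after thinning, in which case we are done, or (ii) some weight~$1/m_k$ persistently carries mass, and then one uses the structure of the functionals $c_\gamma^*$ of weight~$1/m_k$ together with~\eqref{defnDeltanplus1prime} to reach the same conclusion by a rank-based pigeonhole argument. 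One must also verify that all of this machinery, originally stated for~$X_{\text{AH}}$, transfers to~$Y$: here Corollary~\ref{corBasicpropertiesofY} and Corollary~\ref{coriqmapsGamma} do the work, guaranteeing that restriction to~$\Gamma'$ is compatible with the $i_q$-maps and that block sequences in~$Y$ "look the same'' as block sequences in~$X_{\text{AH}}$ supported in~$\Gamma'$. Modulo these checks, the argument is a faithful adaptation of Argyros and Haydon's proof of \cite[Proposition~5.11]{ah}.
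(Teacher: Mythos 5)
Your outer structure is fine and matches the paper: the reduction of compactness to the statement that $\|Tx_j\|\to0$ along every bounded block sequence in~$Y$, via Lemma~\ref{FDDprojapprox}\romanref{FDDprojapprox2} and the shrinking FDD coming from the basis $(d_\gamma)_{\gamma\in\Gamma'}$, is exactly the paper's equivalence \alphref{AHprop511c}$\Leftrightarrow$\alphref{AHprop511e}, and \alphref{AHprop511c}$\Rightarrow$\alphref{AHprop511a} is indeed trivial. The genuine gap is in the core step, where you propose to extract from a semi-normalized block sequence a subsequence that \emph{is} a RIS (or, failing that, to conclude by a ``rank-based pigeonhole argument'' on the functionals $c_\gamma^*$). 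The extraction claim is false in general: if each $x_j$ carries a coordinate of modulus comparable to $1$ at some $\gamma_j$ with $\weight\gamma_j = 1/m_{k_j}$, where $k_j\to\infty$ much more slowly than $\max\ran x_{j-1}$, then condition~\romanref{defnRIS3} fails for every constant $C$ and every admissible choice of $(j_i)$ (since $j_i>\max\ran x_{i-1}$ forces $j_i>k_i$ eventually, whence one would need $m_{k_i}\le C$), and this obstruction passes to all subsequences. Nor can the ``heavy'' part simply be discarded, since it need not be small in norm; and the alternative branch you sketch is not an argument.

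What actually closes the gap --- and is the heart of the paper's proof, adapted from \cite[Proposition~5.11]{ah} --- is a \emph{splitting}, not an extraction: writing $u_j = x_j|_{\Gamma_{q_j}^{\text{AH}}}$ and using the extension operators $i_{q_j}$, one decomposes $x_j = y_j^k + z_j^k$, where $y_j^k$ collects the coordinates of $u_j$ at points of weight $\ge 1/m_k$ and $z_j^k$ the rest. Both parts lie in $\spa\{d_\gamma:\gamma\in\Gamma_{q_j}'\setminus\Gamma_{q_{j-1}}'\}\subseteq Y$ --- this is precisely where Corollaries~\ref{corBasicpropertiesofY} and~\ref{coriqmapsGamma} are needed, since $T$ is only defined on~$Y$ --- and they are uniformly bounded by~\eqref{theembeddingin}. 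For fixed $k$, $(y_j^k)_j$ has bounded local weight, hence is a RIS by Proposition~\ref{AHprop510}, so the (subsequence-strengthened) hypothesis~\alphref{AHprop511a} gives $\|Ty_j^k\|\to0$ in $j$; a diagonal choice of indices $j_k$, followed by the recursive choice $k_{p+1}=q_{j_{k_p}}$, makes $(z_{j_{k_p}}^{k_p})_p$ a block sequence with rapidly decreasing local weight, hence again a RIS, so $\|Tz_{j_{k_p}}^{k_p}\|\to0$ as well, and adding the two pieces yields the desired subsequence of $(x_j)$. You cite the right supporting corollaries, but without Proposition~\ref{AHprop510} and this two-part decomposition applied to \emph{both} summands, your argument does not go through.
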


As in \cite{ah}, the proof of this result relies heavily on the
following two notions.

\begin{definition}  A block sequence 
$(x_i)_{i\in\N}$ in~$X_{\normalfont{\text{AH}}}\setminus\{0\}$ has:
\begin{itemize}
  \item \emph{bounded local weight} if $\inf\bigl\{\weight\gamma :
    \gamma\in\bigcup_{i\in\N}\locsupp x_i\bigr\} >0;$
  \item \emph{rapidly decreasing local weight} if, for each $i\in\N$
    and \mbox{$\gamma\in\locsupp x_{i+1}$}, we have $\weight\gamma <
    1/m_{q_i}$, where $q_i := \max\ran x_i$.
\end{itemize}
\end{definition}

\begin{proposition}[{\cite[Proposition~5.10]{ah}}]\label{AHprop510} 
  Let $(x_i)_{i\in\N}$ be a bounded block sequence
  in \mbox{$X_{\normalfont{\text{AH}}}\setminus\{0\},$} and suppose that
  $(x_i)_{i\in\N}$ has either bounded local weight or rapidly
  decreasing local weight. Then $(x_i)_{i\in\N}$ is a RIS.
\end{proposition}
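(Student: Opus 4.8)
The plan is to verify the three defining properties of a RIS for the given sequence $(x_i)_{i\in\N}$ directly. Put $C_0 := \sup_{i\in\N}\|x_i\|_\infty<\infty$, and for each~$i$ write $r_i := \min\ran x_i$ and $q_i := \max\ran x_i$, so that $q_{i-1}<r_i$ (with $q_0 := 0$) because $(x_i)$ is a block sequence. Property~\romanref{defnRIS1} is immediate, and property~\romanref{defnRIS2} will hold once we put $j_1 := 1$ and $j_i := q_{i-1}+1$ for $i\ge2$, this sequence being strictly increasing since $(q_i)$ is. Everything thus reduces to property~\romanref{defnRIS3}, so fix $i\ge2$ (the case $i = 1$ being vacuous) and $\gamma\in\Gamma^{\text{AH}}$ with $\weight\gamma = 1/m_k$ and $k<j_i$, that is, $k\le q_{i-1}$; we must bound $|x_i(\gamma)| = |\langle e_\gamma^*,x_i\rangle|$ by $C/m_k$. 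If $\rank\gamma<r_i$ then $x_i(\gamma) = 0$, because $\supp d_\eta\subseteq\{\eta\}\cup(\Gamma^{\text{AH}}\setminus\Gamma^{\text{AH}}_{\rank\eta})$ for each~$\eta$ occurring in the expansion $x_i = \sum_\eta\langle d^*_\eta,x_i\rangle d_\eta$, all such~$\eta$ having $\rank\eta\ge r_i>\rank\gamma$. If $r_i\le\rank\gamma\le q_i$ then $x_i(\gamma)$ is a coordinate of~$x_i$ that vanishes unless $\gamma\in\locsupp x_i$; and when $\gamma\in\locsupp x_i$ the hypotheses enter: under bounded local weight $\weight\gamma\ge1/m_{j_0}$ (where $j_0$ is fixed with $\weight\eta\ge1/m_{j_0}$ on $\bigcup_i\locsupp x_i$), so $k\le j_0$ and $|x_i(\gamma)|\le C_0\le C_0m_{j_0}/m_k$, whereas under rapidly decreasing local weight $\weight\gamma<1/m_{q_{i-1}}$, i.e.\ $k>q_{i-1}$, contradicting $k\le q_{i-1}$. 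So it remains to treat the case $\rank\gamma>q_i$.

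In that case $\langle d_\gamma^*,x_i\rangle = 0$, hence $x_i(\gamma) = \langle c_\gamma^*,x_i\rangle$, which I would compute by unfolding the recursion defining $c_\gamma^*$. If $\gamma$ is of type~$1$ this is immediate: $c_\gamma^* = b^*/m_k$ with $\|b^*\|_1\le1$, so $|x_i(\gamma)|\le C_0/m_k$. If $\gamma$ is of type~$2$, its construction produces a tower $\gamma = \xi_a,\ldots,\xi_1$ with $\age\xi_t = t$, $\weight\xi_t = 1/m_k$ and strictly increasing ranks, such that $c^*_{\xi_t} = e^*_{\xi_{t-1}} + m_k^{-1}\beta^*_t$ for $t\ge2$, while $c^*_{\xi_1} = m_k^{-1}b^*_1$, where $\beta^*_t = b^*_t - P^*_{(0,\rank\xi_{t-1}]}b^*_t$ for some $b^*_t$ with $\|b^*_t\|_1\le1$, so that $\langle\beta^*_t,x_i\rangle = \langle b^*_t,(I-P_{(0,\rank\xi_{t-1}]})x_i\rangle$. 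Let $s$ be the number of $\xi_t$ of rank at most~$q_i$; these form a prefix $\xi_1,\ldots,\xi_s$ of the tower (the ranks increase), and $s<a$. Unfolding $x_i(\xi_t) = \langle c^*_{\xi_t},x_i\rangle$ while $\rank\xi_t>q_i$ yields
\[ x_i(\gamma) \;=\; x_i(\xi_s) \;+\; \frac{1}{m_k}\sum_{t=s+1}^{a}\bigl\langle b^*_t,(I-P_{(0,\rank\xi_{t-1}]})x_i\bigr\rangle, \]
where for $s = 0$ the first summand is dropped and the sum starts at $t = 1$ (with $\rank\xi_0 := 0$). Every summand with $t\ge s+2$ vanishes, since then $\rank\xi_{t-1}\ge\rank\xi_{s+1}>q_i = \max\ran x_i$, so $P_{(0,\rank\xi_{t-1}]}x_i = x_i$; and the one surviving summand has modulus at most $3C_0$, since $\|b^*_t\|_1\le1$ and $\|I-P_{(0,n]}\|\le3$ (the basis constant being at most~$2$). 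Hence $|x_i(\gamma)|\le|x_i(\xi_s)| + 3C_0/m_k$.

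To finish, note that if $x_i(\xi_s)\ne0$ then $\rank\xi_s\le q_i$ forces $\xi_s\in\locsupp x_i$ with $\weight\xi_s = 1/m_k$, so the two cases close exactly as in the first paragraph: under bounded local weight, $k\le j_0$ and $|x_i(\gamma)|\le C_0 + 3C_0/m_k\le4C_0m_{j_0}/m_k$; under rapidly decreasing local weight, $x_i(\xi_s)\ne0$ is impossible (it would give $k>q_{i-1}$), so $|x_i(\gamma)|\le3C_0/m_k$. Collecting all cases, property~\romanref{defnRIS3} holds with $C := 4C_0m_{j_0}$ in the bounded-local-weight case and with $C := 3C_0$ in the rapidly-decreasing-local-weight case, and $(x_i)_{i\in\N}$ is a RIS.

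The part I expect to need the most care is the displayed identity together with the claim that all but one of its error terms vanish: this rests on the ranges of the functionals $b^*_t$ being nested consecutively between the strictly increasing ranks $\rank\xi_{t-1}$ and~$\rank\xi_t$, on the adjoint identity turning $b^*_t - P^*_{(0,\rank\xi_{t-1}]}b^*_t$ evaluated at~$x_i$ into $b^*_t$ evaluated at the tail $(I-P_{(0,\rank\xi_{t-1}]})x_i$, and on that tail being~$0$ once the truncation level reaches $\max\ran x_i$. One must also keep straight the bottom of the tower (the type-$1$ element~$\xi_1$, whose rank may lie above or below~$q_i$) and the degenerate case $s = 0$; these fit into the same bookkeeping. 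Note that the bound $\age\gamma<n_k$ is never used, since the argument sums at most a single error term.
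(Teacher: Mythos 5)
Your argument is correct. A point of comparison first: the paper does not prove this statement at all --- it is imported directly from Argyros and Haydon (their Proposition~5.10), so there is no internal proof to measure you against; what you have produced is in effect a self-contained reconstruction of the argument from the cited source. Your unfolding of $e_\gamma^*$ along the chain $\xi_1,\ldots,\xi_a$ generated by the type-2 recursion is precisely Argyros and Haydon's ``evaluation analysis'' of an element of $\Gamma^{\text{AH}}$, and the resulting estimate $|x_i(\gamma)|\le|x_i(\xi_s)|+3C_0/m_k$ for $\rank\gamma>\max\ran x_i$ is their key Section~5 estimate for coordinates beyond the range; the dichotomy according to whether $\xi_s$ lies in $\locsupp x_i$ then plays out exactly as in their proof, so the route is the same in substance, just inlined and with explicit constants. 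The individual steps all check: the choice $j_i=q_{i-1}+1$ makes condition~(ii) automatic and condition~(iii) vacuous for $i=1$; $\langle d^*_{\xi_t},x_i\rangle=0$ for $\rank\xi_t>q_i$ and $P_{(0,\,n]}x_i=x_i$ for $n\ge q_i$ justify both the unfolding and the vanishing of all terms with $t\ge s+2$; the duality $\langle b^*-P^*_{(0,\,p]}b^*,x_i\rangle=\langle b^*,(I-P_{(0,\,p]})x_i\rangle$ is exactly how $P_{(0,\,p]}$ is defined in the paper; $\|I-P_{(0,\,n]}\|\le3$ follows from the recorded bound $M\le2$ on the basis constant; and the constants $C=3C_0$ (rapidly decreasing local weight) and $C=4C_0m_{j_0}$ (bounded local weight) are uniform in~$i$, as the definition of a RIS requires. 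Your closing remark that the restriction $\age\gamma<n_k$ is never needed is also accurate: it matters only when the whole sum over the chain must be controlled, whereas here at most one term survives.
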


\begin{proof}[Proof of Proposition~{\normalfont{\ref{AHprop511}}}]
The implication
\alphref{AHprop511c}$\Rightarrow$\alphref{AHprop511a} is obvious. 

\alphref{AHprop511c}$\Leftrightarrow$\alphref{AHprop511e}.  Each
subsequence of a bounded block sequence is evidently itself a bounded
block sequence. Hence condition~\alphref{AHprop511c} is equivalent to
the formally stronger statement that $\|Tx_i\|\to0$ as $i\to\infty$
for every bounded block sequence $(x_i)_{i\in\N}$ in~$Y$, and this
latter statement is in turn equivalent to
condition~\alphref{AHprop511e} by
Lemma~\ref{FDDprojapprox}\romanref{FDDprojapprox2}, which applies
because the basis $(d_\gamma)_{\gamma\in\Gamma'}$ for~$Y$ is
shrinking.

It remains to prove that
\alphref{AHprop511a}$\Rightarrow$\alphref{AHprop511c}, which we shall
accomplish by adapting the proof of \cite[Proposition~5.11]{ah}. We
begin by observing that since each subsequence of a RIS is a RIS,
condition~\alphref{AHprop511a} is equivalent to the formally stronger
statement that $\|Tx_i\|\to0$ as $i\to\infty$ for every RIS
$(x_i)_{i\in\N}$ in~$Y$.  Suppose that this statement holds true, let
$(x_j)_{j\in\N}$ be a bounded block sequence in~$Y$, and choose
integers $0 = q_0 < q_1<q_2<\cdots$ such that $\ran x_j\subseteq
(q_{j-1},q_j]$ for each $j\in\N$.  Fix $j,k\in\N$, set $u_j =
  x_j|_{\Gamma_{q_j}^{\normalfont{\text{AH}}}}$ and, for each
  $\gamma\in\Gamma_{q_j}^{\normalfont{\text{AH}}}$, define
  \[ v_j^k(\gamma) = \begin{cases}
     u_j(\gamma)\ &\text{if}\ \weight\gamma\ge
     1/m_k\\ 0\ &\text{otherwise} \end{cases} \qquad\text{and}\qquad
  w_j^k(\gamma) = \begin{cases} u_j(\gamma)\ &\text{if}\ \weight\gamma
    < 1/m_k\\ 0\ &\text{otherwise.} \end{cases} \] Then we have $u_j =
  v_j^k + w_j^k$, $\| v_j^k\|_\infty\vee\| w_j^k\|_\infty =
  \|u_j\|_\infty\le \|x_j\|_\infty$ and \[ \supp v_j^k\cup\supp w_j^k
  = \supp u_j\subseteq\Gamma_{q_j}'\setminus\Gamma_{q_{j-1}}' \] by
  Corollary~\ref{corBasicpropertiesofY}\romanref{corBasicpropertiesofY2}.
  Hence $y_j^k := i_{q_j}(v_j^k)$ and $z_j^k := i_{q_j}( w_j^k)$
  satisfy $y_j^k + z_j^k = i_{q_j}(u_j) = x_j$, they both belong to
  $\spa\{d_\gamma : \gamma\in
  \Gamma_{q_j}'\setminus\Gamma_{q_{j-1}}'\}$ by
  Corollary~\ref{coriqmapsGamma}, and their norms are at most
  $2\|x_j\|_\infty$ by~\eqref{theembeddingin} and
  Remark~\ref{basisofXAHisshrinking}\romanref{basisofXAHisshrinking1}. Thus
  $(y_j^k)_{j\in\N}$ and $(z_j^k)_{j\in\N}$ are bounded block
  sequences in~$Y$.  Using~\eqref{locsuppeq}, we obtain
  \[ \locsupp y_j^k\subseteq\supp v_j^k = \Bigl\{\gamma\in\supp u_j :
  \weight\gamma\ge\frac{1}{m_k}\Bigr\}, \] so that $(y_j^k)_{j\in\N}$
  has bounded local weight, and it is therefore a RIS by
  Proposition~\ref{AHprop510}. Hence the assumption implies that $\|
  Ty_j^k\|\to 0$ as $j\to\infty$, so that we can recursively choose 
  integers $1<j_1<j_2<\cdots$ such that $\| Ty_{j_k}^k\|\to 0$ as
  $k\to\infty$. Set $k_1 = 1$ and, recursively, define $k_{p+1} =
  q_{j_{k_p}}$ for $p\in\N$. Then $(z_{j_{k_p}}^{k_p})_{p\in\N}$ is a
  bounded block sequence with rapidly decreasing local weight, so it
  is a RIS by Proposition~\ref{AHprop510}, and hence
  $\|Tz_{j_{k_p}}^{k_p}\|\to 0$ as $p\to\infty$. It now follows that
  $x_p' := x_{j_{k_p}} = y_{j_{k_p}}^{k_p} +
  z_{j_{k_p}}^{k_p}\ (p\in\N)$ is a subsequence of $(x_j)_{j\in\N}$
  such that $\|Tx_p'\|\to 0$ as $p\to\infty$.
\end{proof}

We shall next establish a lemma which generalizes \cite[Lemma~7.2 and
  Proposition~7.3]{ah}.  While we shall require it only for $\Upsilon
= \Gamma'$, we have chosen to state it in greater generality to
highlight that, unlike Proposition~\ref{AHprop511}, it does not depend
on any special properties of the set~$\Gamma'$.  The statement of this
lemma involves three further notions.  First, for a subspace~$W$
of~$X_{\normalfont{\text{AH}}}$, we denote by
$W\cap\subfield^{\Gamma^{\text{\normalfont{AH}}}}$ the set of $w\in W$
such that $w(\gamma)\in\subfield$ for each
$\gamma\in\Gamma^{\text{\normalfont{AH}}}$, where we recall
that~$\subfield$ is the subfield of the scalar field given
by~\eqref{defnsubfieldL}. Second, for natural numbers $p<q$, we write
$P^*_{(p,\,q]}$ for the operator $P^*_{(0,\,q]} - P^*_{(0,\,p]}$ and
denote by~$P_{(p,\,q]}$ its adjoint. Third, we require the following
piece of terminology, which originates from \cite[Definition~6.1]{ah}.

\begin{definition}
Let $C>0$ and $j\in\N$. A $(C,j,0)$-\emph{exact pair} is a pair
$(z,\eta)\in X_{\text{AH}}\times\Gamma^{\text{AH}}$ that satisfies:
\begin{itemize}
\item $|\langle d_\xi^*,z\rangle|\le C/m_j$ for each
  $\xi\in\Gamma^{\text{AH}}$, $\weight\eta = 1/m_j$, $\|z\|_\infty\le
  C$ and $z(\eta) = 0;$
\item $|z(\xi)|\le C/m_{i\wedge j}$ for each $i\in\N\setminus\{j\}$
  and each $\xi\in\Gamma^{\text{AH}}$ with $\weight\xi = 1/m_i$.
\end{itemize}
\end{definition}

\begin{lemma}\label{AHlemma723}
Let~$C>0$, let $W = \clspa\{d_\gamma : \gamma\in\Upsilon\}$ for some
non-empty subset~$\Upsilon$ of~$\Gamma^{\text{\normalfont{AH}}},$ and
let $T\colon W\to X_{\normalfont{\text{AH}}}$ be a bounded operator.
\begin{romanenumerate}
\item\label{AHlemma723i} Let $(x_i)_{i\in\I}$ be a $C$-RIS in~$W$,
  where~$\I$ is a non-empty interval of~$\N$. Then, for each
  $\epsilon>0$, there is a $(C+\epsilon)$-RIS $(y_i)_{i\in\I}$
  in~$W\cap\subfield^{\Gamma^{\text{\normalfont{AH}}}}$ such that
  $\|x_i-y_i\|_\infty\le \epsilon$ for each $i\in\I$.
\item\label{AHlemma723ii} Suppose that $\dist(Tx_i,\K x_i)\to 0$ as
  $i\to\infty$ for every RIS $(x_i)_{i\in\N}$
  in~$W\cap\subfield^{\Gamma^{\text{\normalfont{AH}}}}$. Then $\dist(Tx_i,\K x_i)\to 0$ as
  $i\to\infty$ for every RIS $(x_i)_{i\in\N}$ in~$W$.
\item\label{AHlemma723iii} Let $\delta>0$, and let $(x_i)_{i\in\N}$ be
  a $C$-RIS in~$W\cap\subfield^{\Gamma^{\text{\normalfont{AH}}}}$ such
  that $\dist(Tx_i,\K x_i)>\delta$ for each $i\in\N$. Then, for each
  $j\in\N$ and $p\in\N_0$, there are $z\in\spa\{x_i:i\in\N\}\subseteq
  W,$ $q\in\N\cap(p,\infty)$ and
  $\eta\in\Delta_q^{\text{\normalfont{AH}}}$ such that the following
  five conditions are satisfied:
\begin{enumerate}
\item\label{AHlemma723iii1} $\ran z\subseteq (p,q);$
\item\label{AHlemma723iii2} $(z,\eta)$ is a $(16C,2j,0)$-exact pair;
\item\label{AHlemma723iii3} $\real(Tz)(\eta)>7\delta/16;$
\item\label{AHlemma723iii4}
  $\|(I_{X_{\normalfont{\text{AH}}}}-P_{(p,\,q]})Tz\|_\infty<\delta/m_{2j};$
\item\label{AHlemma723iii5} $\real\langle Tz,
  P^*_{(p,\,q]}e_\eta^*\rangle > 3\delta/8$.
\end{enumerate}
\item\label{AHlemma723iv} For every RIS $(x_i)_{i\in\N}$ in~$W$,
  $\dist(Tx_i,\K x_i)\to0$ as $i\to\infty$.
\end{romanenumerate} 
\end{lemma}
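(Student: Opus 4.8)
The plan is to treat the four parts in turn. Parts~\romanref{AHlemma723i} and~\romanref{AHlemma723ii} are elementary perturbation arguments, whereas~\romanref{AHlemma723iii} and~\romanref{AHlemma723iv} are, up to cosmetic changes, Argyros and Haydon's Lemma~7.2 and Proposition~7.3 from~\cite{ah}; the whole point of stating the lemma for a general~$\Upsilon$ is that their arguments never use anything about~$\Gamma'$ beyond the fact that a RIS in~$W$, and the vectors it spans, stay inside~$W$. For~\romanref{AHlemma723i}: the change-of-basis between $(e_\gamma^*)_{\gamma\in\Gamma^{\text{\normalfont{AH}}}}$ and $(d_\gamma^*)_{\gamma\in\Gamma^{\text{\normalfont{AH}}}} = (e_\gamma^*-c_\gamma^*)_{\gamma\in\Gamma^{\text{\normalfont{AH}}}}$ is lower-triangular and unipotent with entries in~$\subfield$ (each~$c_\gamma^*$ has $\subfield$-coefficients and support in $\Gamma_{(\rank\gamma)-1}^{\text{\normalfont{AH}}}$), so the coordinate functionals~$d_\gamma$ lie in $\subfield^{\Gamma^{\text{\normalfont{AH}}}}$; moreover $W = \{v : \langle d_\gamma^*,v\rangle = 0\ \text{for}\ \gamma\notin\Upsilon\}$, so a vector of~$W$ with finite support in the basis $(d_\gamma)$ has that support inside~$\Upsilon$. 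Hence each RIS vector $x_i=\sum_\gamma a_\gamma^i d_\gamma$ is a finite $\K$-combination of $d_\gamma$'s with $\gamma\in\Upsilon$; replacing each non-zero~$a_\gamma^i$ by a non-zero $b_\gamma^i\in\subfield$ with $\sum_\gamma|a_\gamma^i-b_\gamma^i|$ small enough that $\|x_i-y_i\|_\infty\le\min\{\epsilon,\epsilon/m_{j_i}\}$, where $y_i:=\sum_\gamma b_\gamma^i d_\gamma$ and $(j_i)$ witnesses the $C$-RIS, yields $y_i\in W\cap\subfield^{\Gamma^{\text{\normalfont{AH}}}}$ with $\ran y_i=\ran x_i$; the three RIS conditions for~$(y_i)$ with constant $C+\epsilon$ and the same witness then follow at once, using $m_{j_i}>m_k$ when $k<j_i$.

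For~\romanref{AHlemma723ii}: suppose a RIS $(x_i)_{i\in\N}$ in~$W$ has $\dist(Tx_i,\K x_i)\not\to0$; after passing to a subsequence there is $\delta>0$ with $\dist(Tx_i,\K x_i)>\delta$ for all~$i$, and (taking scalar~$0$) $\|Tx_i\|_\infty>\delta$, so $\|x_i\|_\infty>\delta/\|T\|$. Apply~\romanref{AHlemma723i} with $\epsilon<\delta/(10\|T\|)$ to obtain a RIS $(y_i)_{i\in\N}$ in $W\cap\subfield^{\Gamma^{\text{\normalfont{AH}}}}$ with $\|x_i-y_i\|_\infty\le\epsilon$. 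If $\|Ty_i-\lambda y_i\|<\delta/4$ for some scalar~$\lambda$, then $|\lambda|\,\|y_i\|_\infty\le\|Ty_i\|_\infty+\delta/4$, whence $|\lambda|<\tfrac32\|T\|$ (since $\|y_i\|_\infty>\delta/(2\|T\|)$), and therefore $\|Tx_i-\lambda x_i\|\le\|T\|\epsilon+\delta/4+|\lambda|\epsilon<\delta/2$, contradicting $\dist(Tx_i,\K x_i)>\delta$; hence $\dist(Ty_i,\K y_i)\ge\delta/4$ for all~$i$, which together with the hypothesis is absurd. Consequently, to prove~\romanref{AHlemma723iv} it suffices, by~\romanref{AHlemma723ii}, to prove it for RIS lying in $W\cap\subfield^{\Gamma^{\text{\normalfont{AH}}}}$.

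Part~\romanref{AHlemma723iii} is the main obstacle, and I would follow~\cite[Lemma~7.2]{ah} line by line. First pass to a subsequence so that, since block sequences with respect to the shrinking FDD $(M_n)_{n\in\N}$ of~$X_{\text{\normalfont{AH}}}$ are weakly null (Remark~\ref{basisofXAHisshrinking}\romanref{basisofXAHisshrinking2}), each~$Tx_i$ is $2^{-i}$-close to an honest block sequence $(\widetilde{Tx}_i)_{i\in\N}$ in~$X_{\text{\normalfont{AH}}}$; using $X_{\text{\normalfont{AH}}}^*\cong\ell_1(\Gamma^{\text{\normalfont{AH}}})$ and Hahn--Banach, choose $\phi_i\in\ell_1(\Gamma^{\text{\normalfont{AH}}})$ with $\|\phi_i\|_1\le1$, $\langle\phi_i,x_i\rangle=0$, $\real\langle\phi_i,Tx_i\rangle>\delta$, supported on the range of~$\widetilde{Tx}_i$ (so $\langle\phi_i,Tx_k\rangle$ is negligible for $k\ne i$). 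Then, over a length-$n_{2j}$ stretch~$J$ of the RIS with all ranges above~$p$, form the scaled average $z=\frac{m_{2j}}{n_{2j}}\sum_{i\in J}x_i$; Argyros and Haydon's estimate for averages of a RIS bounds $\|z\|_\infty$ (by~$16C$, after adjusting constants) and yields the exact-pair coordinate inequalities $|\langle d_\xi^*,z\rangle|\le 16C/m_{2j}$ and $|z(\xi)|\le 16C/m_{i\wedge 2j}$. Finally introduce the type-$1$ node $\eta=\bigl(q,\tfrac1{m_{2j}},b^*\bigr)$ of weight~$\tfrac1{m_{2j}}$, with~$q$ large enough that $\ran z\subseteq(p,q)$, that $\supp b^*\subseteq\Gamma_{q-1}^{\text{\normalfont{AH}}}$, and that the coordinates $\langle d_\eta^*,Tx_k\rangle$ $(k\in J)$ are negligible, where $b^*\in B_{0,q-1}$ is the rational approximation of the combination of the~$\phi_i$ dictated by AH's argument; since $\ran z<q$ we have $z(\eta)=\tfrac1{m_{2j}}\langle b^*,z\rangle\approx0$, and the verification of conditions~\romanref{AHlemma723iii1}--\romanref{AHlemma723iii5} is then exactly the computation carried out there (conditions~\romanref{AHlemma723iii4} and~\romanref{AHlemma723iii5} coming from~$q$ being far above $\ran z$ together with the $\ell_1$-tail estimate for~$Tz$ and the block structure of $(Tx_i)$). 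Nothing in this uses special properties of~$\Upsilon$: the vectors stay in $\spa\{x_i:i\in\N\}\subseteq W$, and the~$\phi_i$ live on the whole space.

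Part~\romanref{AHlemma723iv}: by the reduction above, assume $(x_i)_{i\in\N}$ is a $C$-RIS in $W\cap\subfield^{\Gamma^{\text{\normalfont{AH}}}}$ with $\dist(Tx_i,\K x_i)>\delta$ for all~$i$, and argue as in~\cite[Proposition~7.3]{ah}. Fixing a large index~$j_0$, iterate~\romanref{AHlemma723iii} to feed exact pairs into a dependent-sequence construction: the successive applications use $p$ equal to the rank of the previous node and weight indices prescribed by the coding map~$\sigma$, so that the resulting nodes assemble into a single special node~$\eta^*$ of weight~$\tfrac1m$ for the appropriate large~$m$, with associated average $z\in W$. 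The basic inequality for dependent sequences bounds $\|z\|_\infty\le\kappa\,C/m$ with~$\kappa$ independent of~$j_0$, so $|(Tz)(\eta^*)|\le\|Tz\|_\infty\le\kappa\,\|T\|C/m$; on the other hand, unwinding the special functional $e_{\eta^*}^*$ along the dependent structure and invoking~\romanref{AHlemma723iii3}--\romanref{AHlemma723iii5} shows that $\real(Tz)(\eta^*)$ exceeds a fixed positive multiple of~$\delta$. Choosing~$j_0$ so that~$m$ is large enough yields the contradiction, which proves~\romanref{AHlemma723iv}. I expect the node construction in~\romanref{AHlemma723iii} and the coding and error bookkeeping in~\romanref{AHlemma723iv} to be the only genuinely demanding parts; \romanref{AHlemma723i} and~\romanref{AHlemma723ii} are straightforward.
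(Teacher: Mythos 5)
Your proposal is correct and follows essentially the same route as the paper: parts~\romanref{AHlemma723i} and~\romanref{AHlemma723ii} by the standard $\subfield$-approximation/perturbation argument that the paper omits, and parts~\romanref{AHlemma723iii} and~\romanref{AHlemma723iv} by observing that $(x_i)_{i\in\N}$, hence $(Tx_i)_{i\in\N}$, is weakly null and that the averages and dependent-sequence construction stay inside $\spa\{x_i:i\in\N\}\subseteq W$, so that the proofs of \cite[Lemma~7.2 and Proposition~7.3]{ah} carry over verbatim. The only blemishes are cosmetic ones in your sketch of the Argyros--Haydon estimates (e.g.\ the precise powers of $m_{2j_0-1}$ in the upper and lower bounds), which do not affect the argument.
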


\begin{proof}
Clauses \romanref{AHlemma723i} and~\romanref{AHlemma723ii} are both
proved by standard approximation arguments. We omit the details.

\romanref{AHlemma723iii}.  Since~$(x_i)_{i\in\N}$ is a bounded block
sequence with respect to the shrinking
basis~$(d_\gamma)_{\gamma\in\Upsilon}$ for~$W$, it is weakly null
in~$W$. Being bounded, the operator~$T$ is automatically weakly
continuous, so that $(Tx_i)_{i\in\N}$ is weakly null
in~$X_{\text{AH}}$. Now the remainder of the proof of
\cite[Lemma~7.2]{ah} carries over verbatim. (Note the need for the
real part in conditions~\eqref{AHlemma723iii3}
and~\eqref{AHlemma723iii5}; this is due to the fact that we consider
complex as well as real scalars.)

\romanref{AHlemma723iv}.  Assume towards a contradiction that there is
a RIS $(x_i)_{i\in\N}$ in~$W$ such that $\dist(Tx_i,\K x_i)\not\to0$
as $i\to\infty$. By~\romanref{AHlemma723ii}, we may suppose that
$x_i\in W\cap\subfield^{\Gamma^{\text{\normalfont{AH}}}}$ for each
$i\in\N$. We may now proceed exactly as in the proof of
\cite[Proposition~7.3]{ah} to reach a contradiction,
using~\romanref{AHlemma723iii} instead of \cite[Lemma~7.2]{ah} and
noting that the element \[ z = \frac{1}{n_{2j_0-1}}\sum_{i=1}^{n_{2j_0
    -1}}z_i \] defined in \cite[p.~34]{ah} belongs to~$W$, so that we
may apply the operator~$T$ to it.
\end{proof}

Finally, we can prove clause~\romanref{prop3ofY} of
Theorem~\ref{thesubspaceY}.

\begin{proof}[Proof of Theorem~{\normalfont{\ref{thesubspaceY}%
      \romanref{prop3ofY}}}]
  Lemma~\ref{AHlemma723}\romanref{AHlemma723iv} shows that, for each
  RIS $(x_i)_{i\in\N}$ in~$Y$, there is a scalar sequence
  $(\lambda_i)_{i\in\N}$ such that $\| Tx_i - \lambda_ix_i\|_\infty\to
  0$ as $i\to\infty$. Suppose that $(x_i)_{i\in\N}$ is
  semi-normalized.  Then, arguing as in the proof of
  \cite[Theorem~7.4]{ah}, we deduce that $(\lambda_i)_{i\in\N}$ is
  convergent and that the limit is independent of the choice of
  $(\lambda_i)_{i\in\N}$ and $(x_i)_{i\in\N}$; that is, we have a
  scalar~$\lambda$ such that
  \begin{equation}\label{proofofprop3ofYeq1}
  \|Tx_i - \lambda x_i\|_\infty\le \|Tx_i - \lambda_i x_i\|_\infty +
  |\lambda - \lambda_i|\,\|x_i\|_\infty\to 0\quad\text{as}\quad
  i\to\infty \end{equation} for every semi-normalized RIS
  $(x_i)_{i\in\N}$ in~$Y$.

  We shall now complete the proof by showing that the operator
  $T-\lambda J$ is compact. By
 Proposition~\ref{AHprop511}, we must show that every RIS
  $(x_i)_{i\in\N}$ in~$Y$ has a subsequence $(x_i')_{i\in\N}$ such
  that $\|Tx_i' - \lambda x_i'\|_\infty\to 0$ as $i\to\infty$. If
  $(x_i)_{i\in\N}$ is semi-normalized, then this follows
  from~\eqref{proofofprop3ofYeq1} (and there is no need to pass to a
  subsequence). Otherwise $(x_i)_{i\in\N}$ has a subsequence
  $(x_i')_{i\in\N}$ which is norm-null, in which case the conclusion
  is obvious (because the operator $T-\lambda J$ is bounded).
\end{proof}

\section{The lattice of closed two-sided ideals of~$\mathscr{B}(Z)$:
  the proofs of Theorem~\ref{2sidedideallatticeofBZ} and
  Proposition~\ref{BZnonamenable}}\label{section3}
\noindent
Denote by~$\mathscr{T}_2$ the algebra of upper triangular $(2\times
2)$-matrices over~$\K$. Since every bounded operator on~$Z$ has a
unique matrix representation of the form~\eqref{matrixrepofoperator},
we can define unital algebra homomorphisms by
\begin{equation}\label{defnphi}
  \phi\colon\ \begin{pmatrix}
  \alpha_{1,1}I_{X_{\text{\normalfont{AH}}}} + K_{1,1} & \alpha_{1,2}
  J + K_{1,2}\\ K_{2,1} & \alpha_{2,2}I_Y +
  K_{2,2} \end{pmatrix}\mapsto \begin{pmatrix} \alpha_{1,1} &
    \alpha_{1,2}\\ 0 & \alpha_{2,2}\end{pmatrix},\quad
  \mathscr{B}(Z)\to\mathscr{T}_2, \end{equation} and
\begin{equation}\label{defnpsi}
\psi\colon\ \begin{pmatrix} \alpha_{1,1} & \alpha_{1,2}\\ 0 &
\alpha_{2,2}\end{pmatrix}\mapsto
\begin{pmatrix} \alpha_{1,1}I_{X_{\text{\normalfont{AH}}}} & 
\alpha_{1,2} J\\ 0 & \alpha_{2,2}I_Y \end{pmatrix},\quad
\mathscr{T}_2\to\mathscr{B}(Z). \end{equation} Clearly $\ker\phi =
\mathscr{K}(Z)$, and the composition~$\phi\circ\psi$ is equal to the
identity operator on~$\mathscr{T}_2$, so that we have a split-exact
sequence
\begin{equation*}
  \spreaddiagramcolumns{2ex}\xymatrix{\{0\}\rto &
    \mathscr{K}(Z)\rto^-{\displaystyle{\iota}} &
    \mathscr{B}(Z)\rto<0.33ex>^-{\displaystyle{\phi}} &
    \mathscr{T}_2\lto<0.33ex>^-{\displaystyle{\psi}}\rto & \{0\},}
\end{equation*} 
where~$\iota\colon\mathscr{K}(Z)\to\mathscr{B}(Z)$ is the inclusion
map.

\begin{proof}[Proof of
  Theorem~{\normalfont{\ref{2sidedideallatticeofBZ}}}] For each
  two-sided ideal~$\mathscr{I}$ of~$\mathscr{T}_2$, the
  pre-image~$\phi^{-1}[\mathscr{I}]$ under~$\phi$ is a two-sided ideal
  of~$\mathscr{B}(Z)$. The identity $\phi^{-1}[\mathscr{I}] =
  \psi[\mathscr{I}] + \mathscr{K}(Z)$ shows that this ideal is closed
  (as the sum of a finite-dimensional subspace and a closed subspace),
  and the map $\mathscr{I}\mapsto \phi^{-1}[\mathscr{I}]$ is an
  order isomorphism of the lattice of two-sided ideals
  of~$\mathscr{T}_2$ onto the lattice of closed two-sided ideals
  of~$\mathscr{B}(Z)$ that contain~$\mathscr{K}(Z)$.
  Since~$X_{\text{\normalfont{AH}}}$ and~$Y$ both have Schauder bases,
  $\mathscr{K}(Z)$ is the minimum non-zero closed two-sided ideal
  of~$\mathscr{B}(Z)$. Hence the conclusion follows from the standard
  elementary fact that the lattice of two-sided ideals
  of~$\mathscr{T}_2$ is given by
  \begin{equation*}
  \spreaddiagramrows{-2ex}\spreaddiagramcolumns{-2ex}%
  \mbox{}\quad\xymatrix{&
    \mathscr{T}_2\ar@{-}[dl]\ar@{-}[dr]\\ \smashw[r]{\biggl\{ \begin{pmatrix}
        \alpha_{1,1} & \alpha_{1,2}\\ 0 & 0 \end{pmatrix} :
      \alpha_{1,1},\,\alpha_{1,2}\in\mathbb{K}\biggr\}
      =\;}\mathscr{R}_1\ar@{-}[dr] & & \mathscr{C}_2\smashw[l]{\;=
      \biggl\{ \begin{pmatrix} 0 & \alpha_{1,2}\\ 0 &
        \alpha_{2,2} \end{pmatrix} :
      \alpha_{1,2},\,\alpha_{2,2}\in\mathbb{K}\biggr\}}\ar@{-}[dl]\\ &
    \smashw[r]{\operatorname{rad}\mathscr{T}_2 =\;}
    \mathscr{R}_1\cap\mathscr{C}_2\smashw[l]{\;=
      \biggl\{ \begin{pmatrix} 0 & \alpha_{1,2}\\ 0 & 0 \end{pmatrix}
      : \alpha_{1,2}\in\mathbb{K}\biggr\}}\ar@{-}[d]\\ &
    \{0\}\smashw{,}} \end{equation*} where
  $\operatorname{rad}\mathscr{T}_2$ denotes the Jacobson radical
  of~$\mathscr{T}_2$, and the lines denote proper inclusions with the
  larger ideal at the top.
\end{proof}

\begin{proof}[Proof of Proposition~{\normalfont{\ref{BZnonamenable}}}] 
  Endow~$\mathscr{T}_2$ with an algebra norm. (Since~$\mathscr{T}_2$
  is finite-dimensional, all norms on it are equivalent, so it does
  not matter which one we choose.) Then~$\mathscr{T}_2$ is a standard
  example of a non-amenable Banach algebra, for instance because the
  map
  \[ \begin{pmatrix} \alpha_{1,1} & \alpha_{1,2}\\ 0 &
    \alpha_{2,2} \end{pmatrix}\mapsto \begin{pmatrix} 0 &
    \alpha_{1,2}\\ 0 & 0\end{pmatrix},\quad
    \mathscr{T}_2\to\operatorname{rad}\mathscr{T}_2, \] is a bounded
    derivation which is not inner (and its codomain
    $\operatorname{rad}\mathscr{T}_2$ is a dual Banach
    $\mathscr{T}_2$\nobreakdash-bi\-module because it is a finite-dimensional
    two-sided ideal of~$\mathscr{T}_2$). Moreover, the map
    \begin{equation}\label{T2isomorphismBK}
  A\mapsto\psi(A)+\mathscr{K}(Z),\quad
  \mathscr{T}_2\to\mathscr{B}(Z)/\mathscr{K}(Z), \end{equation}
    where~$\psi$ is given by~\eqref{defnpsi}, is an algebra
    isomorphism, which is automatically bounded because its domain is
    finite-dimensional, so that~$\mathscr{T}_2$ is isomorphic to a
    quotient of~$\mathscr{B}(Z)$, and hence the conclusion follows
    from \cite[Proposition~2.8.64(ii)]{da}.
\end{proof}

\begin{remark}
  The proof of Proposition~\ref{BZnonamenable} shows that the algebra
  homomorphism~$\phi$ given by~\eqref{defnphi} is bounded because it
  is the composition of the quotient homomorphism of~$\mathscr{B}(Z)$
  onto~$\mathscr{B}(Z)/\mathscr{K}(Z)$ with the inverse of the
  isomorphism~\eqref{T2isomorphismBK}.
\end{remark}

\section{Approximate identities: the proof of
  Theorem~\ref{thmAIs}}\label{sectionAIs}
\noindent
Recall that, for $n\in\N$, $P_{(0,\,n]}|X_{\text{AH}}$ is the $n^{\text{th}}$
canonical projection associated with the shrinking FDD $(M_k)_{k\in\N}
= (\spa\{d_\gamma:\gamma\in\Delta_k^{\text{AH}}\})_{k\in\N}$
for~$X_{\text{AH}}$. Clearly the subspace~$Y$ is
$P_{(0,\,n]}$\nobreakdash-in\-vari\-ant, and the restriction
  $P_{(0,\,n]}|Y$ is the $n^{\text{th}}$ canonical 
projection associated with the shrinking FDD 
$(\spa\{d_\gamma:\gamma\in\Delta_k'\})_{k=2}^\infty$ for~$Y$. 
Consequently Lemma~\ref{FDDprojapprox} implies the following result, 
which establishes all the positive statements concerning the existence
of bounded approximate identities in Theorem~\ref{thmAIs}.
\enlargethispage*{10pt}
\begin{proposition}
  \begin{romanenumerate}
  \item The sequence
    \[ \left(\begin{pmatrix} I_{X_{\text{\normalfont{AH}}}} & 0\\ 0 &
        P_{(0,\,n]}|Y\end{pmatrix}\right)_{n\in\N} \] is a bounded left
    approximate identity in~$\mathscr{M}_1$.
  \item The sequence
    \[ \left(\begin{pmatrix} P_{(0,\,n]}|X_{\text{\normalfont{AH}}} & 0\\ 0 &
        I_Y\end{pmatrix}\right)_{n\in\N} \] is a bounded right
    approximate identity in~$\mathscr{M}_2$.
  \item The sequence
    \[ \left(\begin{pmatrix} P_{(0,\,n]}|X_{\text{\normalfont{AH}}} & 0\\ 0 &
        P_{(0,\,n]}|Y\end{pmatrix}\right)_{n\in\N} \] is a bounded two-sided
    approximate identity in~$\mathscr{K}(Z)$.
  \end{romanenumerate}
\end{proposition}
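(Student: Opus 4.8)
The plan is to verify, for each of the three nets, the two defining
requirements of a bounded (left / right / two-sided) approximate
identity: uniform boundedness, which is immediate, and the relevant
limit relation, which will drop out of
Lemma~\ref{FDDprojapprox} once the appropriate operator-matrix
products have been written down. Throughout I use the matrix
representation~\eqref{matrixrepofoperator} of a bounded operator
on~$Z$.

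For the boundedness, put $C_{\text{AH}} =
\sup_{n\in\N}\|P_{(0,\,n]}|X_{\text{\normalfont{AH}}}\|$ and $C_Y =
\sup_{n\in\N}\|P_{(0,\,n]}|Y\|$; these are finite because they are the
decomposition constants of the FDD $(M_k)_{k\in\N}$
for~$X_{\text{\normalfont{AH}}}$ and of the FDD
$(\spa\{d_\gamma:\gamma\in\Delta_k'\})_{k\ge2}$ for~$Y$. With respect
to the $\ell_\infty$-norm on~$Z$, each term of each of the three nets
therefore has norm at most $\max\{1,C_{\text{AH}},C_Y\}$. Moreover
each such term lies in the ideal claimed: a finite-rank operator such
as $P_{(0,\,n]}|Y$ or $P_{(0,\,n]}|X_{\text{\normalfont{AH}}}$ is
compact, so in cases~(i) and~(ii) the relevant diagonal scalar
($\alpha_{2,2}$, respectively~$\alpha_{1,1}$)
in~\eqref{matrixrepofoperator} vanishes, while in case~(iii) the net
consists of finite-rank operators and hence lies in~$\mathscr{K}(Z)$.

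For~(i), I would take an arbitrary $T\in\mathscr{M}_1$, written as
in~\eqref{matrixrepofoperator} with $\alpha_{2,2}=0$, and multiply it
on the left by the $n^{\text{th}}$ term of the net; the difference
from~$T$ is the operator-matrix whose only non-zero entries are
$(P_{(0,\,n]}|Y-I_Y)K_{2,1}$ and $(P_{(0,\,n]}|Y-I_Y)K_{2,2}$ in the
bottom row. Since $K_{2,1}\colon X_{\text{\normalfont{AH}}}\to Y$ and
$K_{2,2}\colon Y\to Y$ are compact and $(P_{(0,\,n]}|Y)_{n\in\N}$ are
the canonical projections of the FDD for~$Y$,
Lemma~\ref{FDDprojapprox}\romanref{FDDprojapprox1} forces both entries
to zero in norm, so the net is a bounded left approximate identity
for~$\mathscr{M}_1$. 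Part~(ii) is symmetric: multiplying an arbitrary
$T\in\mathscr{M}_2$ on the right by the $n^{\text{th}}$ term leaves
only the left-column entries
$K_{1,1}(P_{(0,\,n]}|X_{\text{\normalfont{AH}}}-I_{X_{\text{\normalfont{AH}}}})$
and
$K_{2,1}(P_{(0,\,n]}|X_{\text{\normalfont{AH}}}-I_{X_{\text{\normalfont{AH}}}})$,
which tend to zero by
Lemma~\ref{FDDprojapprox}\romanref{FDDprojapprox2}, the FDD
for~$X_{\text{\normalfont{AH}}}$ being shrinking by
Remark~\ref{basisofXAHisshrinking}\romanref{basisofXAHisshrinking2}.
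For~(iii), writing $G_n$ for the $n^{\text{th}}$ term of that net, one
has for a compact operator~$T$ on~$Z$ that every entry of $G_nT-T$ is
of the form $(P_{(0,\,n]}-I)K_{i,j}$ and every entry of $TG_n-T$ of
the form $K_{i,j}(P_{(0,\,n]}-I)$ with~$K_{i,j}$ compact, where
$P_{(0,\,n]}$ and~$I$ act on whichever of
$X_{\text{\normalfont{AH}}},Y$ is appropriate; the former entries tend
to zero by Lemma~\ref{FDDprojapprox}\romanref{FDDprojapprox1} and the
latter by Lemma~\ref{FDDprojapprox}\romanref{FDDprojapprox2}, the
latter using that the FDDs for both $X_{\text{\normalfont{AH}}}$
and~$Y$ are shrinking.

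There is no genuine obstacle here; the only bookkeeping point is to
keep straight which half of Lemma~\ref{FDDprojapprox} governs a given
matrix entry --- pre-multiplying by a canonical projection of the FDD
on the \emph{target} space needs only compactness, whereas
post-multiplying by a canonical projection of the FDD on the
\emph{domain} space requires that FDD to be shrinking --- together with
the observation that the negative assertions of
Theorem~\ref{thmAIs} (for instance that $\mathscr{M}_1$ admits no
right approximate identity) are not part of this proposition and will
be established separately.
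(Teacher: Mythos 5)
Your argument is correct and is essentially the paper's own proof: the paper simply observes that $Y$ is $P_{(0,\,n]}$-invariant, that $P_{(0,\,n]}|Y$ gives the canonical projections of the shrinking FDD for~$Y$, and then invokes Lemma~\ref{FDDprojapprox}; you have just written out the matrix products and the appeal to parts \romanref{FDDprojapprox1} and \romanref{FDDprojapprox2} explicitly, with the correct bookkeeping about when the shrinking hypothesis is needed.
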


The non-existence statements in Theorem~\ref{thmAIs} will all be easy
consequences of the following lemma.

\begin{lemma}\label{distJtocompact}
  The inclusion map~$J\colon Y\to X_{\text{\normalfont{AH}}}$ has
  distance~$1$ to the ideal of compact opera\-tors, in the sense that
  \[ \inf\bigl\{ \| J - K\| : K\in\mathscr{K}(Y,
  X_{\text{\normalfont{AH}}})\bigr\} = 1. \]
\end{lemma}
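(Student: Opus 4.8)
The plan is to establish the two inequalities separately. The upper bound $\inf\{\|J-K\|:K\in\mathscr{K}(Y,X_{\text{AH}})\}\le 1$ is the easy direction: I would take $K = JP_{(0,\,n]}|Y$, which is compact because it factors through the finite-dimensional space $\spa\{d_\gamma:\gamma\in\Gamma_n'\}$, and observe that $J - JP_{(0,\,n]}|Y = J(I_Y - P_{(0,\,n]}|Y)$ has norm at most $\sup_n\|I_Y - P_{(0,\,n]}|Y\|\le 1 + C$ where $C\le 2$ is the decomposition constant. This only gives a bound of the order of $3$, so to get the sharp constant $1$ I would instead argue more carefully: since the FDD-projections on $Y$ (equivalently on $X_{\text{AH}}$) have the property that $\|(I_{X_{\text{AH}}} - P_{(0,\,n]})x\|_\infty \to 0$ for $x$ in the dense span, and since $Y$ carries the restriction of the $\ell_\infty(\Gamma)$-norm, the natural candidate showing the distance is at most $1$ is to use that $\|J - JP_{(0,\,n]}|Y\|$ can in fact be controlled directly: for any finitely supported $y\in Y$ with $\ran y\subseteq(n,\infty)$ one has $(I_Y - P_{(0,\,n]})y = y$, and $\|Jy\|_\infty = \|y\|_\infty$, so no improvement on $1$ is available — this shows the distance is \emph{at least} $1$, not at most. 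So I reorganise: the genuinely easy inequality is the \textbf{lower bound} $\ge 1$.

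\textbf{Lower bound.} To show $\|J - K\|\ge 1$ for every compact $K\colon Y\to X_{\text{AH}}$, I would use a block-sequence/gliding-hump argument. Pick any semi-normalised block sequence $(y_i)_{i\in\N}$ in $Y$ with $\|y_i\|_\infty = 1$; for instance take $y_i = d_{\gamma_i}$ for a suitable increasing enumeration $(\gamma_i)$ of $\Gamma'$, noting $\|d_\gamma\|_\infty = 1$ since $d_\gamma(\gamma)=1$ and $\|d_\gamma\|_\infty\le M\le 2$ — actually $\|d_\gamma\|_\infty \ge |d_\gamma(\gamma)| = 1$, and in a Bourgain--Delbaen space $\|d_\gamma\|_\infty = 1$. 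Then $(y_i)$ is weakly null in $Y$ (the basis is shrinking), hence $\|Ky_i\|\to 0$ by compactness of $K$. But $\|Jy_i\|_\infty = \|y_i\|_\infty = 1$ since $J$ is an isometry for the $\ell_\infty$-norms. Therefore $\|J - K\| \ge \limsup_i \|(J-K)y_i\| \ge \limsup_i(\|Jy_i\| - \|Ky_i\|) = 1$.

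\textbf{Upper bound.} For $\|J-K\|\le 1$ for a suitable compact $K$, I would exploit property \romanref{prop3ofY} together with the geometry of the $\ell_\infty$-norm on $X_{\text{AH}}$. The key point is that $X_{\text{AH}}$ is an $\mathscr{L}_\infty$-space with the canonical extension operators $i_n$ satisfying $\|i_n(x)\|_\infty \le M\|x\|_\infty$ with $M \le 2$ (Remark~\ref{basisofXAHisshrinking}\romanref{basisofXAHisshrinking1}), and $i_n$ restricts $J$ on the relevant finite-dimensional pieces. I would try to build $K = J - R$ where $R$ is a carefully chosen finite-rank-approximable operator: using the shrinking FDD, write $J = JP_{(0,\,n]}|Y + J(I_Y - P_{(0,\,n]})|Y$ and estimate the tail. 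The issue, as noted above, is that the naive tail estimate gives only $1+C$. To sharpen this to exactly $1$, I expect one must use that for $y\in Y$ with $\ran y \subseteq (n,q]$, the element $y = i_q(u)$ with $u = y|_{\Gamma_q^{\text{AH}}}$ supported in $\Gamma_q'\setminus\Gamma_n'$, and then $\|y\|_\infty \ge \|u\|_\infty$ by \eqref{theembeddingin}; combined with a convexity/averaging argument over such tails one recovers that the infimum of the tail norms is exactly $1$. This step — extracting the sharp constant $1$ rather than merely a finite constant from the upper bound — is the main obstacle. I would look to mimic the relevant norm-computation in \cite{ah} (their Proposition~4.x computing $\|d_\gamma\|$ and the $\mathscr{L}_\infty$-constant), and I suspect the clean way is: given $\epsilon>0$, use the density of $\spa\{d_\gamma:\gamma\in\Gamma'\}$ and a gliding hump to produce a compact $K$ (a finite sum of coordinate-type operators) with $\|(J-K)y\|_\infty \le (1+\epsilon)\|y\|_\infty$ on a dense subspace, then pass to the closure. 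Combined with the lower bound, this yields the claimed equality.
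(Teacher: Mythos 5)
Your lower bound is fine, but the upper bound half of the proposal contains the genuine gap, and it is a gap over a non-issue. You do not need to construct any nontrivial compact operator $K$ with $\|J-K\|\le 1$: the zero operator is compact, $Y$ carries the restriction of the norm of $X_{\text{AH}}$, so $J$ is an isometry with $\|J\|=1$, and hence $\inf\{\|J-K\| : K\in\mathscr{K}(Y,X_{\text{AH}})\}\le\|J-0\|=1$. This is exactly how the paper disposes of that direction, in one line. Your attempt with $K=JP_{(0,\,n]}|Y$, the ensuing worry about the constant $1+C$, and the concluding sketch (``convexity/averaging over tails'', ``coordinate-type operators'', mimicking norm computations from \cite{ah}) are all aimed at producing $\|J-K\|\le 1+\epsilon$ for a \emph{nonzero} compact $K$, which is both unnecessary and, as you yourself concede (``this step \dots is the main obstacle''), not actually carried out; as written, that half of the proof is incomplete. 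The muddle at the start, where the tail estimate is first offered for the upper bound and then reinterpreted as evidence for the lower bound, stems from the same conflation.

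The lower bound, which is where the real content of the lemma lies, is essentially correct and takes a slightly different route from the paper: you use that (normalized) basis vectors of $Y$ are weakly null, because the basis $(d_\gamma)_{\gamma\in\Gamma'}$ is shrinking, and that a compact operator sends weakly null sequences to norm-null sequences, whereas the paper chooses $n$ with $\|K-P_{(0,\,n]}K\|$ small via Lemma~\ref{FDDprojapprox}\romanref{FDDprojapprox1} and then applies Riesz's lemma to find a unit vector of~$Y$ almost at distance~$1$ from the finite-dimensional range of $P_{(0,\,n]}$. Both arguments work. One repair is needed in yours: the claim $\|d_\gamma\|_\infty=1$ is not justified -- from $d_\gamma=i_{\rank\gamma}(e_\gamma)$, the estimate~\eqref{theembeddingin} and $d_\gamma(\gamma)=1$ one only gets $1\le\|d_\gamma\|_\infty\le M\le 2$ -- but it is also not needed: simply use $y_i=d_{\gamma_i}/\|d_{\gamma_i}\|_\infty$, which is still a bounded, weakly null sequence of unit vectors, so $\|(J-K)y_i\|\ge 1-\|Ky_i\|\to 1$ and $\|J-K\|\ge 1$ for every compact~$K$.
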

\begin{proof}
  The right-hand side dominates the left-hand side because $\|J\| =
  1$.

  On the other hand, given $\epsilon>0$ and $K\in\mathscr{K}(Y,
  X_{\text{\normalfont{AH}}})$, we can find $n\in\N$ such that $\|K -
  P_{(0,\,n]}K\|\le\epsilon/2$ by
    Lemma~\ref{FDDprojapprox}\romanref{FDDprojapprox1}. Riesz's lemma
    (see, \emph{e.g.}, \cite[Lemma~1.1.1]{cpy}) implies that there
    exists a unit vector $y\in Y$ such that $\| y - P_{(0,\,n]}x\|_\infty\ge
    1-\epsilon/2$ for each $x\in X_{\text{\normalfont{AH}}}$, and hence
  \[ \| J - K\|\ge \| (J - K)y\|_\infty\ge \| y - P_{(0,\,n]}Ky\|_\infty - \| Ky -
  P_{(0,\,n]}Ky\|_\infty\ge 1-\epsilon, \] from which the conclusion
  follows. \end{proof}

\begin{proof}[Proof of Theorem~{\normalfont{\ref{thmAIs}\romanref{thmAIs1}}}]
  For each
  \[ T = \begin{pmatrix} \alpha_{1,1}I_{ X_{\text{\normalfont{AH}}}} +
    K_{1,1} & \alpha_{1,2}J + K_{1,2}\\ K_{2,1} &
    K_{2,2} \end{pmatrix}\in\mathscr{M}_1, \] where
  $\alpha_{1,1},\alpha_{1,2}\in\K$ and $K_{1,1},\ldots,K_{2,2}$ are
  compact, we have
  \[ \left\| \begin{pmatrix} 0 & J\\ 0 & 0\end{pmatrix} -
    \begin{pmatrix} 0 & J\\ 0 & 0\end{pmatrix}T\right\| =
  \biggl\|\begin{pmatrix} -JK_{2,1} & J-JK_{2,2}\\
    0 & 0\end{pmatrix} \biggr\|\ge \| J - JK_{2,2}\|\ge 1 \] by
  Lemma~\ref{distJtocompact}. Hence~$\mathscr{M}_1$ has no right
  approximate identity. 

  The other statements are proved similarly.
\end{proof}

\section{Maximal left ideals of~$\mathscr{B}(Z)$: the proof of
  Theorem~\ref{Thmnonfixedfgmaxleftideal}}\label{sectionMaxLeftIdeals}
\noindent
The key ingredient in our proof of
Theorem~\ref{Thmnonfixedfgmaxleftideal}, besides the properties
of~$X_{\text{\normalfont{AH}}}$ and~$Y$ stated in Theorems~\ref{thmAH}
and~\ref{thesubspaceY}, is the following extension theorem of
Grothendieck (see~\cite[pp.~559--560]{groth}, or
\cite[Theorem~1]{linden}), which applies to compact operators
into~$X_{\text{\normalfont{AH}}}$ or~$Y$ be\-cause they are both
isomorphic preduals of~$\ell_1$.
\begin{theorem}[Grothendieck]\label{lindenstraussextthm}
  Let $E$ be a subspace of a Banach space~$F$, and let $G$ be a Banach
  space whose dual space is isomorphic to~$L_1(\mu)$ for some
  measure~$\mu$. Then every compact operator from~$E$ into~$G$ has an
  extension to a compact operator from~$F$ into~$G$.
\end{theorem}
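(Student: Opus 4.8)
The plan is to prove the statement in its full isomorphic strength by reducing to finite-rank operators and then invoking the structure theory of $\mathscr{L}_\infty$-spaces. Two structural inputs will be used. First, the hypothesis forces $G$ to be an $\mathscr{L}_{\infty,\lambda}$-space for some $\lambda\geq1$: indeed $G^{**}$ is isomorphic to $(L_1(\mu))^*$, which is an injective Banach space (being isometric to a $C(K)$-space for an extremally disconnected compact Hausdorff space $K$), and a Banach space whose bidual is injective is an $\mathscr{L}_\infty$-space by a theorem of Lindenstrauss and Rosenthal (equivalently, isomorphic preduals of $L_1$-spaces are $\mathscr{L}_\infty$-spaces by the work of Lewis and Stegall, and of Stegall). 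Second, $G$ consequently has the bounded approximation property, so that every compact operator into $G$ is a norm-limit of finite-rank operators. The second assertion follows from the first by a standard argument: given a finite-dimensional subspace $V$ of $G$, choose a finite-dimensional $N\supseteq V$ in $G$ with $d(N,\ell_\infty^{\dim N})\leq\lambda$; since $\ell_\infty^k$ is $1$-injective and injectivity is preserved, with control of the constant, under isomorphisms, $N$ is a $\lambda$-injective Banach space, so extending $\mathrm{id}_N$ gives a projection of $G$ onto $N$ of norm at most $\lambda$, which is the required finite-rank near-identity on $V$.

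First I would reduce to the finite-rank case. Let $T\colon E\to G$ be compact, with $E$ a subspace of $F$. Using the bounded approximation property of $G$, pick finite-rank operators $T_n\colon E\to G$ with $\|T-T_n\|\leq2^{-n}$ for each $n$, and write $T=\sum_{n=1}^\infty U_n$ in operator norm, where $U_1:=T_1$ and $U_n:=T_n-T_{n-1}$ for $n\geq2$; then every $U_n$ is finite-rank and $\sum_n\|U_n\|<\infty$.

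The crux is to extend each $U_n$ to $F$ with control of its norm. Fix $n$, set $V_n:=U_n[E]$, a finite-dimensional subspace of $G$, and --- using again that $G$ is $\mathscr{L}_{\infty,\lambda}$ --- choose a finite-dimensional subspace $N_n$ of $G$ with $V_n\subseteq N_n$ and $d(N_n,\ell_\infty^{\dim N_n})\leq\lambda$. As in the previous paragraph, $N_n$ is then a $\lambda$-injective Banach space, so, regarding $U_n$ as an operator from $E$ into $N_n$, it extends to an operator $\widehat{U_n}\colon F\to N_n\subseteq G$ with $\|\widehat{U_n}\|\leq\lambda\|U_n\|$. By construction $\widehat{U_n}$ is finite-rank, has range inside $G$, and restricts to $U_n$ on $E$. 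It then remains to assemble: the series $\widehat{T}:=\sum_{n=1}^\infty\widehat{U_n}$ converges in operator norm because $\sum_n\|\widehat{U_n}\|\leq\lambda\sum_n\|U_n\|<\infty$; each partial sum is finite-rank, so $\widehat{T}$ is compact; it maps $F$ into $G$; and $\widehat{T}|_E=\sum_n U_n=T$. Thus $\widehat{T}$ is the required compact extension.

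The step I expect to be the main obstacle is precisely this norm-controlled extension of the finite-rank summands: a naive Hahn--Banach extension of the functionals defining $U_n$ produces an extension whose norm bears no relation to $\|U_n\|$, and the whole point is that the $\mathscr{L}_\infty$-structure of $G$ lets one replace the (possibly badly complemented) range $V_n$ by a slightly larger subspace $N_n$ that is simultaneously uniformly complemented in $G$ and uniformly injective, inside which the extension is harmless. The two structural inputs --- that $(L_1(\mu))^*$ is injective and that a space with injective bidual is an $\mathscr{L}_\infty$-space --- I would simply quote from the literature; alternatively, one can bypass the latter by carrying out the argument inside the injective space $G^{**}$ itself (which is also an $\mathscr{L}_{\infty,1+\epsilon}$-space), extending $\kappa_G\circ U_n$ into a uniformly complemented finite-dimensional subspace of $G^{**}$ and then transporting the result back into $G$ via the principle of local reflexivity.
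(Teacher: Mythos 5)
Your proposed proof is correct, but there is nothing in the paper to measure it against line by line: the paper states this result as a known theorem of Grothendieck, citing \cite[pp.~559--560]{groth} and \cite[Theorem~1]{linden}, and uses it as a black box in the proof of Theorem~\ref{Thmnonfixedfgmaxleftideal}\romanref{Thmnonfixedfgmaxleftideal1}; no proof is given. What you have written is essentially the classical argument from the cited literature, and each step holds up: since $G^{**}$ is isomorphic to the dual of an $L_1$-space, which is $1$-injective (Goodner--Nachbin--Kelley), the Lindenstrauss--Rosenthal characterization makes $G$ an $\mathscr{L}_{\infty,\lambda}$-space; the uniformly bounded projections onto the finite-dimensional subspaces $N\supseteq V$ with $d(N,\ell_\infty^{\dim N})\le\lambda$ give the bounded approximation property, hence the norm-approximation of the compact operator $T$ by finite-rank operators and the absolutely convergent telescoping series $T=\sum_n U_n$; and the crucial norm-controlled extension of each $U_n$ works exactly as you say, because $U_n$ viewed as an operator into the $\lambda$-injective space $N_n$ (with the norm inherited from $G$) extends to $F$ with norm at most $\lambda\|U_n\|$, so the extended series converges to a compact extension of $T$. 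Quoting the two structural inputs is entirely in the spirit of the paper, which quotes the whole theorem; your alternative of working inside the injective space $G^{**}$ and returning via local reflexivity is also viable and closer to self-contained. The only cosmetic quibble is attributional: the fact that isomorphic preduals of $L_1$-spaces are $\mathscr{L}_\infty$-spaces is due to Lindenstrauss--Rosenthal rather than Lewis--Stegall (whose result, used elsewhere in the paper, goes in the direction $\mathscr{L}_\infty$ with separable dual $\Rightarrow$ dual isomorphic to $\ell_1$), and one should note that everything you use is valid for both real and complex scalars, as the paper requires.
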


We shall also require the following elementary observation regarding
the maximal two-sided ideals
\[ \mathscr{R}_1 = \biggl\{ \begin{pmatrix} \alpha_{1,1} & \alpha_{1,2}\\
  0 & 0 \end{pmatrix} :
\alpha_{1,1},\,\alpha_{1,2}\in\mathbb{K}\biggr\}\qquad\text{and}\qquad
\mathscr{C}_2 = \biggl\{ \begin{pmatrix} 0 & \alpha_{1,2}\\ 0 &
  \alpha_{2,2} \end{pmatrix} :
\alpha_{1,2},\,\alpha_{2,2}\in\mathbb{K}\biggr\} \] of~$\mathscr{T}_2$
that were introduced in the proof of
Theorem~\ref{2sidedideallatticeofBZ}. This observation is probably well 
known, but we include a short proof for com\-plete\-ness.

\begin{lemma}\label{maxleftidealsofT2} The 
  ideals~$\mathscr{R}_1$ and~$\mathscr{C}_2$ are the only maximal left
  ideals of~$\mathscr{T}_2$.
\end{lemma}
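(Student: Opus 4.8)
The plan is to exploit the concrete description of $\mathscr{T}_2$ as the algebra of upper-triangular $(2\times 2)$-matrices over $\K$, together with its known two-sided ideal structure recalled in the proof of Theorem~\ref{2sidedideallatticeofBZ}. First I would observe that any maximal left ideal $\mathscr{L}$ of $\mathscr{T}_2$ has codimension one, since $\mathscr{T}_2/\mathscr{L}$ is a simple left $\mathscr{T}_2$-module and hence, being of dimension at most $\dim\mathscr{T}_2=3$, must be one-dimensional (the matrix units give $\mathscr{T}_2$ enough idempotents that no simple module can have dimension $2$ or $3$; alternatively, invoke that $\mathscr{T}_2/\operatorname{rad}\mathscr{T}_2\cong\K\oplus\K$, so every simple module is one-dimensional). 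Consequently $\mathscr{L}\supseteq\operatorname{rad}\mathscr{T}_2$, and $\mathscr{L}/\operatorname{rad}\mathscr{T}_2$ is a maximal left ideal of the semisimple algebra $\mathscr{T}_2/\operatorname{rad}\mathscr{T}_2\cong\K\oplus\K$.

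Next I would determine the maximal left ideals of $\K\oplus\K$ directly: they are exactly $\{0\}\oplus\K$ and $\K\oplus\{0\}$. Pulling these back along the quotient map $\mathscr{T}_2\to\mathscr{T}_2/\operatorname{rad}\mathscr{T}_2$, and noting that $\operatorname{rad}\mathscr{T}_2=\bigl\{\begin{psmallmatrix}0&*\\0&0\end{psmallmatrix}\bigr\}$ while the two copies of $\K$ correspond to the diagonal entries $\alpha_{1,1}$ and $\alpha_{2,2}$, I obtain precisely the two ideals
\[ \mathscr{R}_1 = \biggl\{ \begin{pmatrix} \alpha_{1,1} & \alpha_{1,2}\\ 0 & 0 \end{pmatrix} : \alpha_{1,1},\,\alpha_{1,2}\in\K\biggr\}\quad\text{(the condition $\alpha_{2,2}=0$)} \]
and
\[ \mathscr{C}_2 = \biggl\{ \begin{pmatrix} 0 & \alpha_{1,2}\\ 0 & \alpha_{2,2} \end{pmatrix} : \alpha_{1,2},\,\alpha_{2,2}\in\K\biggr\}\quad\text{(the condition $\alpha_{1,1}=0$)}. \]
Finally I would check that both $\mathscr{R}_1$ and $\mathscr{C}_2$ are indeed left ideals (this is a one-line matrix multiplication check: left-multiplying by an arbitrary upper-triangular matrix preserves having zero bottom-right entry, respectively zero top-left entry) and that they are proper of codimension one, hence maximal.

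An alternative, even more hands-on route avoids the radical entirely: a left ideal $\mathscr{L}$ of codimension one is the kernel of a nonzero linear functional $f$ that is a homomorphism of left $\mathscr{T}_2$-modules onto $\K$, i.e.\ $f(ab)=\lambda(a)f(b)$ for a character $\lambda$; writing $f$ against the basis of matrix units and imposing this condition forces $f$ to be (a scalar multiple of) either $\begin{psmallmatrix}\alpha_{1,1}&\alpha_{1,2}\\0&\alpha_{2,2}\end{psmallmatrix}\mapsto\alpha_{2,2}$ or $\begin{psmallmatrix}\alpha_{1,1}&\alpha_{1,2}\\0&\alpha_{2,2}\end{psmallmatrix}\mapsto\alpha_{1,1}$, whose kernels are $\mathscr{R}_1$ and $\mathscr{C}_2$. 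I expect no real obstacle here — the only point requiring a moment's care is the codimension-one reduction at the start, i.e.\ ruling out simple modules of dimension $2$ or $3$; this is immediate once one notes that $\mathscr{T}_2$ contains the two orthogonal idempotents $e_{11},e_{22}$ summing to the identity, so any simple left module $S$ decomposes as $e_{11}S\oplus e_{22}S$ with each summand a module over the respective corner algebra $\K$, forcing $\dim S=1$.
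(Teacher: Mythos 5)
Your proof is correct, and its core coincides with the paper's: both arguments pivot on the fact that the Jacobson radical $\operatorname{rad}\mathscr{T}_2$ lies in every maximal left ideal. Where you differ is in the endgame. The paper, after noting that a maximal left ideal $\mathscr{L}$ contains $\operatorname{rad}\mathscr{T}_2$ properly, extracts a diagonal element of~$\mathscr{L}$ with a non-zero entry and shows by an explicit one-line multiplication that $\mathscr{L}$ absorbs $\mathscr{R}_1$ or~$\mathscr{C}_2$, whence equality by maximality; you instead pass to the semisimple quotient $\mathscr{T}_2/\operatorname{rad}\mathscr{T}_2\cong\K\oplus\K$, classify its maximal ideals, and pull back along the ideal correspondence. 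Your route is slightly more structural, and it makes your preliminary codimension-one step superfluous: once $\operatorname{rad}\mathscr{T}_2\subseteq\mathscr{L}$ (immediate from the definition of the radical as the intersection of the maximal left ideals, exactly as the paper uses it), the correspondence with the maximal ideals of $\K\oplus\K$ already forces codimension one, so nothing about simple modules of dimension $2$ or $3$ needs to be ruled out separately. One small caveat on that preliminary step as you phrased it: the decomposition $S=e_{11}S\oplus e_{22}S$ with each summand a module over the corner algebra~$\K$ does not by itself force $\dim S=1$ (every vector space is a $\K$-module, and $e_{22}S$ is in general not a $\mathscr{T}_2$-submodule, so simplicity cannot be applied to it directly); the clean justification is the alternative you also give, namely that the radical annihilates every simple module, so every simple left $\mathscr{T}_2$-module is a simple $\K\oplus\K$-module and hence one-dimensional. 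With that reading, your argument, including the character-based variant at the end, goes through; what the paper's version buys in exchange is complete self-containment, using nothing beyond two matrix products.
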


\begin{proof} Both~$\mathscr{R}_1$ and~$\mathscr{C}_2$ have
  codimension one in~$\mathscr{T}_2$, so that they are maximal as left
  ideals.

  Let~$\mathscr{L}$ be any maximal left ideal of~$\mathscr{T}_2$.  As
  noted in the proof of Theorem~\ref{2sidedideallatticeofBZ}, the
  Jacobson radical of~$\mathscr{T}_2$ is given by
  \[ \operatorname{rad}\mathscr{T}_2 = \biggl\{ \begin{pmatrix} 0 &
    \alpha_{1,2}\\ 0 & 0 \end{pmatrix} :
  \alpha_{1,2}\in\mathbb{K}\biggr\}. \] This ideal is not maximal as a
  left ideal because it is properly contained in~$\mathscr{R}_1$
  and~$\mathscr{C}_2$. Hence the definition of the Jacobson radical as
  the intersection of all the maximal left ideals of~$\mathscr{T}_2$
  implies that~$\mathscr{L}$
  contains~$\operatorname{rad}\mathscr{T}_2$ properly, and
  consequently we can find
  \[ \begin{pmatrix} \alpha_{1,1} & 0\\ 0 &
    \alpha_{2,2} \end{pmatrix}\in\mathscr{L} \] with either
  $\alpha_{1,1}\ne 0$ or $\alpha_{2,2}\ne 0$. In the first case we
  conclude that
  \[ \begin{pmatrix} \beta & 0\\ 0 & 0 \end{pmatrix} = \begin{pmatrix}
    \beta/\alpha_{1,1} & 0\\ 0 & 0 \end{pmatrix}\begin{pmatrix}
    \alpha_{1,1} & 0\\ 0 &
    \alpha_{2,2} \end{pmatrix}\in\mathscr{L}\qquad (\beta\in\K), \] so
  that $\mathscr{R}_1\subseteq\mathscr{L}$, and hence $\mathscr{R}_1 =
  \mathscr{L}$ by the maximality of~$\mathscr{R}_1$. A similar
  argument shows that $\mathscr{L} = \mathscr{C}_2$ in the second
  case.
\end{proof}

\begin{proof}[Proof of Theorem~{\normalfont{\ref{Thmnonfixedfgmaxleftideal}}}]  
  As in the proof of Theorem~\ref{2sidedideallatticeofBZ}, we see that
  \mbox{$\mathscr{L}\mapsto \phi^{-1}[\mathscr{L}]$} defines an order
  isomorphism of the lattice of left ideals of~$\mathscr{T}_2$ onto
  the lattice of closed left ideals of~$\mathscr{B}(Z)$ that
  contain~$\mathscr{K}(Z)$. By \cite[Corollary~4.1]{dkkkl}, every
  non-fixed, maximal left ideal of~$\mathscr{B}(Z)$
  contains~$\mathscr{E}(Z)$ and hence~$\mathscr{K}(Z)$, and therefore
  Lemma~\ref{maxleftidealsofT2} shows that $\mathscr{M}_1 =
  \phi^{-1}[\mathscr{R}_1]$ and $\mathscr{M}_2 =
  \phi^{-1}[\mathscr{C}_2]$ are the only non-fixed, maximal left
  ideals of~$\mathscr{B}(Z)$.

  \romanref{Thmnonfixedfgmaxleftideal1}. For each $T =
  (T_{j,k})_{j,k=1}^2\in\mathscr{M}_1$, the operator $T_{2,2}$ is
  compact, so that it has a compact extension
  $\widetilde{T}_{2,2}\colon X_{\text{\normalfont{AH}}}\to Y$ by
  Theorems~\ref{thesubspaceY}\romanref{prop2ofY}
  and~\ref{lindenstraussextthm}. Moreover, we may express~$T_{1,2}$ in
  the form $T_{1,2} = \alpha_{1,2} J + K_{1,2}$, where
  $\alpha_{1,2}\in\K$ and~$K_{1,2}\colon Y\to X_{\text{\normalfont{AH}}}$ is 
  compact, and then another
  application of Theorem~\ref{lindenstraussextthm} gives a compact
  operator $\widetilde{K}_{1,2}\colon X_{\text{\normalfont{AH}}}\to
  X_{\text{\normalfont{AH}}}$ that extends~$K_{1,2}$. Hence we have
  \[ T = \begin{pmatrix} T_{1,1} & 0\\ T_{2,1} & 0\end{pmatrix} +
  \begin{pmatrix} 0 & T_{1,2}\\ 0 & T_{2,2}\end{pmatrix} =
  T \begin{pmatrix} I_{X_{\text{\normalfont{AH}}}} & 0\\
    0 & 0 \end{pmatrix} + \begin{pmatrix} \alpha_{1,2}
    I_{X_{\text{\normalfont{AH}}}} + \widetilde{K}_{1,2} & 0\\
    \widetilde{T}_{2,2} & 0\end{pmatrix} \begin{pmatrix} 0 & J\\ 0 &
    0 \end{pmatrix}, \] which shows that~$\mathscr{M}_1$ is generated
  as a left ideal by the pair of operators given
  by~\eqref{ThmnonfixedfgmaxleftidealEq2}.

  On the other hand, to see that~$\mathscr{M}_1$ is not generated as a
  left ideal by a single bounded operator, assume the contrary, and let~$R =
  (R_{j,k})_{j,k=1}^2$ be a generator of~$\mathscr{M}_1$. Take
  $\alpha_{1,1},\alpha_{1,2}\in\K$ and compact operators $K_{1,1}$ and
  $K_{1,2}$ such that $R_{1,1} = \alpha_{1,1}
  I_{X_{\text{\normalfont{AH}}}} + K_{1,1}$ and $R_{1,2} =
  \alpha_{1,2} J+K_{1,2}$. Since the operators given
  by~\eqref{ThmnonfixedfgmaxleftidealEq2} both belong
  to~$\mathscr{M}_1$, we can find bounded operators $S = (S_{j,k})_{j,k=1}^2$
  and $T = (T_{j,k})_{j,k=1}^2$ on~$Z$ such that
  \begin{equation}\label{eqNotsinglygenerated} \begin{pmatrix}
      I_{X_{\text{\normalfont{AH}}}} & 0\\ 0 & 0 \end{pmatrix} =
    SR\qquad\text{and}\qquad \begin{pmatrix} 0 & J\\ 0 &
      0 \end{pmatrix} = TR. \end{equation} Write $S_{1,1} = \beta
  I_{X_{\text{\normalfont{AH}}}} + U_{1,1}$ and $T_{1,1} = \gamma
  I_{X_{\text{\normalfont{AH}}}} + V_{1,1}$, where $\beta,\gamma\in\K$
  and $U_{1,1}$ and $V_{1,1}$ are compact. The first part
  of~\eqref{eqNotsinglygenerated} implies that $\beta\alpha_{1,1}=1$
  and $\beta\alpha_{1,2} = 0$, so that necessarily $\alpha_{1,2} = 0$,
  while the second part shows that $\gamma\alpha_{1,1} = 0$ and
  $\gamma\alpha_{1,2} = 1$. This is clearly impossible, and
  hence~$\mathscr{M}_1$ cannot be generated as a left ideal by a
  single operator.

  \romanref{Thmnonfixedfgmaxleftideal2}. Assume towards a
  contradiction that~$\mathscr{M}_2$ is the left ideal
  of~$\mathscr{B}(Z)$ generated by the operators $R_1,\ldots,R_n$ for
  some $n\in\N$. The definition~\eqref{defnmaxideal}
  of~$\mathscr{M}_2$ implies that, for each $j\in\{1,\ldots,n\}$, we
  can find $\beta_j,\gamma_j\in\K$ and $K_j\in\mathscr{K}(Z)$ such
  that $R_j = S_j + K_j$, where
  \begin{equation}\label{defnUj} S_j = \begin{pmatrix} 0 & \beta_j J\\ 
      0 & \gamma_j I_Y \end{pmatrix}. \end{equation} By
  \cite[Corollary~4.7]{dkkkl}, the operator
  \[ \Psi\colon\ z\mapsto (R_1z,\ldots,R_nz),\quad Z\to Z^n, \] is
  bounded below, and it is thus an upper semi-Fredholm operator; that
  is, $\Psi$ has finite-dimensional kernel and closed image. Since the
  set of upper semi-Fredholm operators is closed under compact
  perturbations (see, \emph{e.g.}, \cite[Corollary~1.3.7]{cpy}), the
  operator
  \[ S\colon\ z\mapsto \Psi z - (K_1z,\ldots,K_nz) =
  (S_1z,\ldots,S_nz),\quad Z\to Z^n, \] is also an upper semi-Fredholm
  operator, so that its kernel is finite-dimensional. This, however,
  contradicts the fact that $S(x,0) = (0,\ldots,0)$ for each
  $x\in X_{\text{\normalfont{AH}}}$ by~\eqref{defnUj}.
\end{proof}

\subsection*{Acknowledgements} 
We would like to thank Professor Argyros (National Technical
University of Athens, Greece) for having explained to us how to
construct a subspace~$Y$ which has the properties stated in
Theorem~\ref{thesubspaceY} and Dr Zisimopoulou for helping us overcome
some technical difficulties en route to the proof of
Theorem~\ref{thesubspaceY}\romanref{prop3ofY}.

\end{document}